\newtheorem{theorem}{Theorem}%[section]
\newtheorem{proposition}{Proposition}[section]
\newtheorem{lemma}[proposition]{Lemma}
\newtheorem{corollary}[proposition]{Corollary}
\newtheorem{definition}[proposition]{Definition}
\newtheorem{remark}[proposition]{Remark}
\newcommand {\Ext}{{\rm {Ext}}}
\newcommand {\Hom}{{\rm {Hom}}}
\newcommand{\C}{{{\mathcal C}}}
\newcommand{\R}{{\mathbf R}}
\newcommand{\Z}{{{\mathbf Z}}}
\newcommand{\cd}{{\rm cd}}
\newcommand{\im}{{\rm {Im}}}
\newcommand{\cat}{\sf {cat}}
\newcommand{{\pdone}}{{\rm Proj \, \, Dim_1}}
\newcommand{\tc}{{\sf {TC}}}
\newcommand{\hd}{{\sf {gd}}}
\newcommand{\wgt}{{\rm {wgt}}}
\newcommand{\vv}{\mathfrak {v}}
\newcommand{\vp}{\mathfrak {b}}
\newcommand{\zpp}{{{\Z[\pi\times\pi]}}}
\newcommand{\zp}{{{\Z[\pi]}}}
\newcommand{\ev}{{\rm \mathbf {ev}}}
\newcommand{\cps}{{\mathcal C}_{\pi^s}'}
\newcommand{\zc}{\Z[C]}
\begin{document}

\title{On the topological complexity of aspherical spaces}         % Enter your title between curly braces
\author{Michael Farber and Stephan Mescher}        % Enter your name between curly braces
\date{\today}          % Enter your date or \today between curly braces
\maketitle
\begin{abstract} 
The well-known theorem of Eilenberg and Ganea \cite{EG} expresses the Lusternik-Schnirelmann category of an aspherical space $K(\pi, 1)$ as the cohomological dimension of the group $\pi$. In this paper we study a similar problem of determining algebraically the topological complexity of the Eilenberg-MacLane spaces $K(\pi, 1)$. One of our main results states that in the case when the group $\pi$ is hyperbolic in the sense of Gromov the topological complexity 
$\tc(K(\pi, 1))$ either equals or is by one larger than the cohomological dimension of $\pi\times \pi$.
We approach the problem by studying {\it essential cohomology classes}, i.e. classes which can be obtained from the powers of the canonical class 
(defined in \cite{CF}) via  coefficient homomorphisms. 
We describe a spectral sequence which allows to specify a full set of obstructions for a cohomology class to be essential. 
In the case of a hyperbolic group we establish a vanishing property of this spectral sequence which leads to the main result. 
\vskip 0.3 cm
\noindent MSC: 55M99

\end{abstract}
%\msc{55M99}
%\section{Theorem}
%
%Let $A$ be a left $\Z[\pi\times \pi]$-module. We shall say that $A$ is {\it  $n$-admissible} if the homomorphism 
%$$\Ext_{\Z[\pi\times \pi]}^i\left(\Z[\pi]\otimes I^{n-i};A\right) \, \to \, \Ext_{\Z[\pi\times \pi]}^i\left(I^{n-i+1};A\right)$$
%is injective for $i=1, 2, \dots, n$. 
%
%Examples: $A=I^n$ - show. More examples? 
%
%\begin{theorem} If $A$ is $n$-admissible and $H^n(\pi\times \pi; A) \not=0$ then $TC(K(\pi,1)\ge n+1$. 
%\end{theorem}
%
%\section{Question}
%
%{\bf Question:} Define $cd'(\pi\times \pi)$ as the maximal $n$ such that there exists an $n$-admissible module $A$ with $H^n(\pi\times \pi;A) \not=0$. 
%By the above Theorem, $TC(K(\pi,1)\ge \cd'(\pi\times \pi)+1$. Maybe it is an equality? Are the homotopy groups $\pi_i(F)$ of the Schwart fibration 
%$n$-admissible? 
\section{Introduction}

In this paper we study a numerical topological invariant $\tc (X)$ of a topological space $X$, originally introduced in \cite{F}, see also \cite{FarberTCsurvey}, \cite{Finv}. 
The concept of $\tc(X)$ is related to the motion planning problem of robotics where a system (robot) has to be programmed to be able to move autonomously from any initial state to any final state. In this situation a motion of the system is represented by a continuous path 
in the configuration space $X$ and a motion planning algorithm is a section of the path fibration 
\begin{eqnarray}\label{fibration}
p: PX\to X\times X, \quad p(\gamma) = (\gamma(0), \gamma(1)).
\end{eqnarray}
Here $PX$ denotes the space of all continuous paths $\gamma: [0, 1]\to X$ equipped with the compact-open topology. The topological complexity $\tc(X)$ is an integer reflecting the complexity of 
this fibration, it has several different characterisations, see \cite{FarberTCsurvey}. Intuitively, $\tc(X)$ is a measure of the navigational complexity of $X$ viewed as the configuration space of a system. 
$\tc(X)$ is similar in spirit to the classical Lusternik - Schnirelmann category $\cat(X)$.
The invariants $\tc(X)$ and $\cat(X)$ are special cases of a more general notion of the genus of a fibration introduced by A. Schwarz \cite{Sch}. A recent survey of 
the concept $\tc(X)$ and robot motion planning algorithms in interesting configuration spaces can be found in \cite{Frecent}.

\begin{definition}\label{def1}{\rm
Given a path-connected topological space $X$, the topological complexity of $X$ is defined as the minimal number 
$\tc(X)=k$ such that the Cartesian product $X \times X$ can be covered by $k$ open subsets
$X \times X = U_1 \cup U_2 \cup \dots U_k$ with the property that for any $i = 1,2,...,k$ there exists a continuous section
$s_i: U_i \to PX,$
$\pi\circ s_i={\rm id}$, over $U_i$. If no such $k$ exists we will set $\tc(X)=\infty$.
}
\end{definition}

Note that in the mathematical literature there is also a {\it reduced} version of the topological complexity which is one less compared to the one we are dealing with in this paper. 
\vskip 0.3cm

One of the main properties of $\tc(X)$ is its {\it homotopy invariance} \cite{F}, i.e. $\tc(X)$ depends only on the homotopy type of $X$. 
This property is helpful for the task of computing $\tc(X)$ in various examples since cohomological tools can be employed. 
In the case when the configuration space $X$ is aspherical, i.e. $\pi_i(X)=0$ for all $i>1$, the number $\tc(X)$ depends only on the fundamental group 
$\pi=\pi_1(X)$ and it was observed in \cite{FarberTCsurvey} that one has to be able to express $\tc(X)$ in terms of algebraic properties of the group
$\pi$ alone. 

A similar question for the Lusternik - Schnirelmann category $\cat(X)$ was solved by S. Eilenberg and T. Ganea in 1957 in the seminal paper \cite{EG}. 
Their theorem relates $\cat(X)$ and the cohomological dimension of the fundamental group $\pi$ of $X$. 

The problem of computing $\tc(K(\pi,1))$ as an algebraic invariant of the group $\pi$ attracted attention of many mathematicians. Although no general answer is presently known, many interesting results were obtained. 

The initial papers \cite{F}, \cite{FarberTCsurvey} contained computations of $\tc(X)$ for graphs, closed orientable surfaces and tori. In \cite{FarberYuzvinsky} the number $\tc(X)$ was computed for the case when $X$ is the configuration space of many particles moving on the plane without collisions. 
D. Cohen and G. Pruidze  \cite{CohenPruidze} calculated the topological complexity of complements of general position arrangements and Eilenberg -- MacLane spaces associated to certain right-angled Artin groups.

As a recent breakthrough, the topological complexity of closed non-orientable surfaces of genus $g \geq 2$ has only recently been computed by A. Dranishnikov for $g \geq 4$ in \cite{Dranish} and by D. Cohen and L. Vandembroucq for $g=2, 3$ in \cite{CohenVandem}. In both these articles it is shown that $\tc(K(\pi,1))$ attains its maximum, i.e. coincides with $\mathrm{cd}(\pi \times \pi)+1$.

The estimates of M. Grant \cite{Grant} give good upper bounds for $\tc(K(\pi, 1))$ for nilpotent fundamental groups $\pi$. 
In  \cite{GrantLuptonOprea}, M. Grant, G. Lupton and J. Oprea proved that $\tc(K(\pi,1))$ is bounded below by the cohomological dimension 
of $A \times B$ where $A$ and $B$ are  subgroups of $\pi$ whose conjugates intersect trivially. Using these estimates, M. Grant and D. Recio-Mitter \cite{GrantRecio}
have computed $\tc(K(\pi,1))$ for certain subgroups of Artin's braid groups. 

Yuli Rudyak \cite{Rudyak} showed that for any pair of positive integers $k, \ell$ satisfying $k\le \ell \le 2k$ there exists a finitely presented group $\pi$ such that $\cd(\pi)=k$ and $\tc(K(\pi, 1))= \ell+1.$

\vskip 0.5 cm

The authors thank Boris Goldfarb, Sasha Dranishnikov and John Mackay for generously sharing their expertise. We also thank Mark Grant for useful discussions. 

%\newpage
%The results of Grant et al. have in common that they provide new results for groups with nontrivial centers. Thus, their results are complementary to the case of groups with infinite cyclic centralisers that are discussed in this article. 

%
%\vskip 2cm
%
%
%The first author and S. Yuvzinsky have computed the value of $\tc(K(\pi,1))$ for pure braid groups in \cite{FarberYuzvinsky}. 
%
%In \cite{CohenPruidze}, D. Cohen and G. Pruidze have computed the value of $\tc(K(\pi,1))$ for the case of basis-conjugating automorphism groups of free groups and upper triangular McCool groups. They use lower bounds on topological complexity given in terms of the cohomology ring. Furthermore, Y. Rudyak has computed $\tc(K(\pi,1))$ for free products of free abelian groups in \cite{Rudyak}.
%
%Using fibration estimates, M. Grant established an algebraic upper bound for $\tc(K(\pi,1))$ in \cite{Grant} which potentially improves the upper bound $\mathrm{cd}(\pi \times \pi)+1$ for groups with nontrivial center. 
%
%
%The value of the topological complexity of aspherical spaces has been computed for a notable class of torsion-free hyperbolic fundamental groups, namely those of surfaces of positive genus. While the case of orientable surfaces has been well-understood, see \cite{Finv}, 

\section{Statements of the main results}

In this section we state the main results obtained in the present paper. 

\begin{theorem}\label{thm1} Let $X$ be a connected aspherical finite cell complex with hyperbolic fundamental group $\pi=\pi_1(X)$.
Then the topological complexity $\tc(X)$ equals either $\cd(\pi\times\pi)$ or $\cd(\pi\times\pi)+1$.
\end{theorem}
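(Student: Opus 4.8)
My plan is to prove the two inequalities $\cd(\pi\times\pi)\le\tc(X)\le\cd(\pi\times\pi)+1$ separately. The upper bound is soft: since $X$ is aspherical, $X\times X$ is a $K(\pi\times\pi,1)$; hyperbolicity of $\pi$ gives a finite classifying complex, so $\cd(\pi\times\pi)=2\,\cd(\pi)<\infty$, and combining the Eilenberg--Ganea theorem \cite{EG} (which computes $\cat$ of an aspherical complex) with the general inequality $\tc(X)\le\cat(X\times X)$ from \cite{F} yields $\tc(X)\le\cd(\pi\times\pi)+1$. (When $\pi$ is trivial this is $\tc(\mathrm{pt})=1$.) So the whole content is the lower bound $\tc(X)\ge n$, where I write $n=\cd(\pi\times\pi)$; note that this inequality \emph{fails} for general $\pi$ --- e.g.\ for $\pi=\Z^2$ one has $\tc(K(\pi,1))=3<4=\cd(\pi\times\pi)$ --- so hyperbolicity must be used in an essential way.

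For the lower bound I would use the reformulation developed in this paper: $\tc(X)$ equals the sectional category of the diagonal $X\to X\times X$, and the fibrewise-join obstruction theory for this map produces, above the primary obstruction $\vv^{k}\in H^{k}(X\times X;I^{\otimes k})$ --- the $k$-th power of the canonical class of \cite{CF}, with $I$ the augmentation ideal of $\zp$ carrying its $\pi\times\pi$-module structure --- a tower of higher obstructions which the spectral sequence constructed above organizes. Consequently $\tc(X)\ge k+1$ as soon as there exists a nonzero \emph{essential} class in degree $k$, i.e.\ a class in some $H^{k}(\pi\times\pi;M)$ lying in the image of $\vv^{k}$ under a coefficient homomorphism $I^{\otimes k}\to M$. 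It therefore suffices to exhibit an essential class in degree $n-1$. I would start from the observation that $\cd(\pi\times\pi)=n$ and $\pi\times\pi$ is of type FP, so $H^{n}(\pi\times\pi;\zpp)\ne0$, and then feed the coefficient modules arising from this non-vanishing into the spectral sequence: the vanishing property valid for hyperbolic $\pi$ kills every term that could obstruct $\vv^{n-1}$ from surviving to a nonzero essential class, while the surviving contribution is forced to be nonzero by the non-vanishing of the top cohomology. This gives $\tc(X)\ge n$, completing the proof modulo the vanishing property.

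\textbf{The main obstacle} is precisely that vanishing property of the spectral sequence for hyperbolic $\pi$: its higher-obstruction entries are cohomology groups $H^{j}(\pi\times\pi;Q)$ for coefficient modules $Q$ built from $I$ and $\zp$, and these must be shown to vanish throughout the range relevant to total degree $n-1$. This is the step where the geometry of $\pi$ enters --- concretely, the fact that in a (torsion-free) hyperbolic group the intersection of two distinct conjugates of the diagonal subgroup $\Delta\pi\le\pi\times\pi$ is the image of a centralizer, hence virtually cyclic, so that the family of conjugates of $\Delta\pi$ is almost malnormal; this smallness is what collapses the relative cohomology of $\pi\times\pi$ to the required vanishing. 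Executing this carefully --- dealing with torsion, making the vanishing range sharp enough to reach degree $n-1$, and checking that the surviving term is genuinely detected rather than killed by a differential --- is the technical heart of the argument; everything else is bookkeeping with the spectral sequence together with classical facts about $\cd$, $\cat$, and sectional category.
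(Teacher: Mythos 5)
Your high-level strategy is the same as the paper's: reduce the lower bound to non-vanishing of $\vv^{n-1}$ (equivalently, producing a nonzero essential class in degree $n-1$), organize the obstructions in the spectral sequence of the paper, and kill the higher obstruction groups using the hyperbolic geometry of $\pi$ (cyclic centralisers, which is exactly your almost-malnormality of $\Delta_\pi$ in $\pi\times\pi$). The upper bound via Eilenberg--Ganea and the dimensional estimate is also what the paper does. So the skeleton is correct.

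There are, however, two real gaps. The first is that the vanishing coming from cyclic centralisers only covers the columns $s\ge 1$: Corollary \ref{corfinal} gives $E_0^{r,s}=0$ for $r>1$, $s\ge 1$, which kills the obstructions $j_1(\alpha),\dots,j_{n-2}(\alpha)$ but says nothing about $j_0(\alpha)\in E_0^{n,0}$. That zeroth obstruction is exactly the condition that $\alpha$ be a zero-divisor, and its vanishing does not come for free: the paper extracts it from the inequality $\cd(\pi\times\pi)>\cd(\pi)$, which forces the restriction of any top-degree class to the diagonal copy of $\pi$ to live in a trivial group. Your sketch never addresses this step, and ``feeding $H^n(\pi\times\pi;\zpp)$ into the spectral sequence'' will stall at the very first stage without it. The second gap is that proving $\cd(\pi\times\pi)>\cd(\pi)$ is itself nontrivial; the paper does it with a K\"unneth argument, taking $A_1=\zp$ (so $H^{\cd\pi}(\pi,A_1)\ne 0$ and free) and building a second module $A_2$ out of the conjugacy class of a nontrivial element so that $H^1(\pi,A_2)\cong H^1(Z(g),\Z)\cong\Z$, which forces a nonzero cross-product in degree $\cd(\pi)+1$. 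You instead assert $\cd(\pi\times\pi)=2\cd(\pi)$ for hyperbolic $\pi$; this is not established in the paper (and, in light of Dranishnikov's examples with $\cd(\pi_1\times\pi_2)<\cd\pi_1+\cd\pi_2$, it is not something one can quote without proof), though for your purposes only the weaker strict inequality is needed and would have to be argued as above.
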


%Suppose that 
%(1) the fundamental group $\pi=\pi_1(X)$ is such that 
%the centraliser of any element $g\in \pi$, $g\not=1$, is infinite cyclic. We shall also assume that (2) $\cd(\pi\times\pi) >\cd(\pi)$. 

The symbol $\cd(\pi\times\pi)$ stands for the cohomological dimension of $\pi\times\pi$ and similarly for $\cd(\pi)$. 
The Eilenberg-Ganea theorem \cite{Brown} states that
$\cd(\pi\times\pi) =\hd(\pi\times\pi)$ (with one possible exception) and hence the general dimensional upper bound for $\tc(X)$ (see \cite{F}, \cite{Finv}) gives 
\begin{equation}\label{general}\tc(X)\le \hd(\pi\times\pi)+1=\cd(\pi\times\pi)+1.\end{equation}  
Here $\hd(\pi\times\pi)$ denotes {\it the geometric dimension} of $\pi\times\pi$, i.e. the minimal dimension of a cell complex with fundamental group $\pi\times\pi$. Thus Theorem \ref{thm1} essentially states that 
the topological complexity 
$\tc(K(\pi, 1))$, where $\pi$ is hyperbolic, is either maximal (as allowed by the dimensional upper bound) or is by one smaller than the maximum. 

The notion of a hyperbolic group was introduced by M. Gromov in \cite{Ghyp}; we also refer the reader to the monograph \cite{BH}. 
Hyperbolic groups are \lq\lq typical\rq\rq, i.e. they appear with probability tending to 1, in many models of random groups including Gromov's 
well-known  density model \cite{Gromov}, \cite{Ollivier}. 

As an example, consider the case of the fundamental group $\pi=\pi_1(\Sigma_g)$ of a closed orientable surface of 
genus $g\ge 2$. It is torsion-free hyperbolic and $\tc(\Sigma_g)= \cd(\pi\times\pi)+1=5$, see \cite{Finv}, in accordance with the maximal option of Theorem \ref{thm1}. Similarly, if $\pi=F_\mu$ is a free group on $\mu$ generators then, according to \cite{Finv}, Proposition 4.42, $\tc(K(F_\mu,1))=3$ (for $\mu>1$) and $\cd(F_\mu\times F_\mu)=2$; here again Theorem \ref{thm1} is satisfied in the maximal version. 

The only known to us example of an aspherical space $X=K(\pi, 1)$ with $\pi$ hyperbolic where 
$\tc(X) = \cd(\pi\times\pi)$ is the case of the circle $X=S^1$. It would be interesting to learn if some other examples of this type exist. 

Theorem \ref{thm1} follows from the following statement:

\begin{theorem}\label{thm11} Let $X$ be a connected aspherical finite cell complex with fundamental group $\pi=\pi_1(X)$. 
Suppose that (1) the centraliser of any nontrivial element $g\in \pi$ is cyclic and (2) $\cd(\pi\times\pi)>\cd(\pi)$. 
Then the topological complexity $\tc(X)$ equals either $\cd(\pi\times\pi)$ or $\cd(\pi\times\pi)+1$.
\end{theorem}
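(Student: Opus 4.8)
The plan is to realize $\tc(X)$ as a Schwarz genus and bound it from below by the vanishing order of a universal cohomology class, so that the gap in the statement reduces to a single controlled dimension shift. Recall that $\tc(X)$ equals the sectional category of the path fibration $p\colon PX\to X\times X$, and since $X=K(\pi,1)$ this fibration is fibre-homotopy equivalent to the covering-type map classified by the diagonal $\Delta\colon \pi\to\pi\times\pi$; its fibre is $\pi$ with the conjugation action. By Schwarz's theory, a lower bound for $\tc(X)$ is given by the largest $n$ for which some $n$-fold product of classes in $H^{*}(X\times X;\, \mathcal{M})$ pulled back from $H^{*}(\pi\times\pi/\Delta\pi)$ — equivalently, a suitable power of the \emph{canonical class} $\vv\in H^{1}(X\times X;\, I)$ of \cite{CF}, where $I=\ker(\Z[\pi\times\pi]\to\Z[\pi])$ — is nonzero after applying an arbitrary coefficient homomorphism. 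Following the terminology announced in the introduction, such nonzero images are precisely the \emph{essential} cohomology classes, and the first step is to reformulate: $\tc(X)\ge n$ iff there is an essential class in degree $n-1$, so it suffices to produce an essential class in degree $\cd(\pi\times\pi)-1$.

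The second step is to set up the spectral sequence promised in the abstract, whose $E_{2}$-page computes, from the filtration of $X\times X$ by the Schwarz construction, the obstructions to lifting a power of $\vv$ through coefficient homomorphisms; its abutment detects essential classes, and its differentials constitute "a full set of obstructions." Concretely I expect this to be the spectral sequence of the pair $(\pi\times\pi,\, \Delta\pi)$ — i.e. built from $H^{*}(\pi\times\pi;\, -)$ with coefficients running over the modules $\vv^{k}$, resolved against $\Z[\pi\times\pi]$-modules that are induced up from $\Z[\pi]$ via the diagonal — so that the vanishing line lives in total degree related to $\cd(\pi\times\pi)$, while the "error terms" one must kill are concentrated in filtration degrees governed by $\cd(\pi)$. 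Hypothesis (2), namely $\cd(\pi\times\pi)>\cd(\pi)$, is exactly what guarantees that the potentially obstructing groups sit strictly below the diagonal-line where the sought-after class lives, so no differential can hit it.

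The third step is where hypothesis (1) enters and where I expect the main difficulty. The relevant obstruction groups are expressible as cohomology of $\pi\times\pi$ with coefficients in $\Z[\pi\times\pi]\otimes_{\Z[\Delta\pi]}\Z$-type modules, which by a Shapiro-type identification and a decomposition over conjugacy classes reduce to group cohomology of the centralisers $C_{\pi}(g)$ of elements $g\in\pi$; the condition that every nontrivial centraliser is cyclic (hence has cohomological dimension $\le 1$, or is finite cyclic contributing only low-degree classes) forces these summands to vanish in the critical range. Packaging this: one shows the spectral sequence has a \emph{vanishing property} — all entries off the top antidiagonal in the critical total degrees die — so the edge homomorphism is an isomorphism onto a nonzero group, exhibiting an essential class in degree $\cd(\pi\times\pi)-1$. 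Combined with the general upper bound $\tc(X)\le\cd(\pi\times\pi)+1$ from \eqref{general}, this pins $\tc(X)$ to $\{\cd(\pi\times\pi),\,\cd(\pi\times\pi)+1\}$.

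The hard part will be the precise bookkeeping in the third step: identifying the obstruction modules, performing the conjugacy-class decomposition rigorously (the set of conjugacy classes may be infinite, so one needs a direct-sum/direct-product argument that is compatible with the spectral sequence), and checking that "centraliser cyclic" really does annihilate \emph{all} the relevant groups in \emph{all} the relevant bidegrees rather than just the generic one; handling torsion in finite cyclic centralisers, and the interaction with whether $\cd$ versus $\hd$ is being used, are the places where care is needed. Everything else — the identification $\tc=\mathrm{secat}(p)$, the homotopy invariance, the role of $\vv$ — is standard from \cite{F}, \cite{Finv}, \cite{CF}.
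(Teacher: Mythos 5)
Your outline captures the general architecture — essential classes built from powers of $\vv$, a spectral sequence whose $E_0$-term decomposes via Shapiro's lemma over conjugacy classes, vanishing forced by cyclic centralisers, assumption (2) giving a cheap zero-divisor — but it misses the one pivot that makes the paper's argument close, and as written the plan would stall at the last obstruction.

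The vanishing that the cyclic-centraliser hypothesis actually buys (the paper's Corollary \ref{corfinal}) is $E_0^{r,s}=0$ only for $r>1$, $s\ge 1$; the column $r=1$ survives. Your reformulation ``it suffices to produce an essential class in degree $\cd(\pi\times\pi)-1$'' therefore cannot be executed by the spectral-sequence vanishing alone: a class $\beta\in H^{n-1}(\pi\times\pi,B)$ is essential precisely when \emph{all} $n-1$ obstructions $j_0(\beta),\dots,j_{n-2}(\beta)$ vanish, and the last one $j_{n-2}(\beta)$ lives in a subquotient of $E_0^{1,n-2}$, which the hypothesis does not kill. (The same problem would arise if you tried instead to show that a degree-$n$ class is essential, since $j_{n-1}(\alpha)$ lives in $E_0^{1,n-1}$.) The paper's way around this is to lower its ambitions by exactly one filtration step: starting from a nonzero $\alpha\in H^n(\pi\times\pi,A)$ with $n=\cd(\pi\times\pi)$, it shows that $j_0(\alpha)=0$ (assumption (2) forces $\alpha$ to be a zero-divisor) and that $j_1(\alpha),\dots,j_{n-2}(\alpha)$ vanish because they lie in the groups $E_s^{n-s,s}$ with $n-s>1$, concluding only that $\alpha\in D_{n-1}^{n,0}$, i.e.\ $\alpha=\psi_\ast(\vv^{n-1}\cup u)$ for some $u$ of degree $1$. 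This is \emph{not} essentiality, and the passage from there to $\tc(X)\ge n$ is not the Schwarz genus bound you invoke; it is the $\tc$-weight argument: $\vv$ is a zero-divisor so $\wgt(\vv)\ge 1$, hence $\wgt(\vv^{n-1})\ge n-1$ by superadditivity, and for any $f\colon Y\to X\times X$ as in the weight definition one gets $f^\ast(\alpha)=\psi_\ast\bigl(f^\ast(\vv^{n-1})\cup f^\ast(u)\bigr)=0$, giving $\wgt(\alpha)\ge n-1$ and thus $\tc(X)\ge n$. That pivot — settle for $D_{n-1}^{n,0}$ and convert through weight rather than essentiality — is the missing idea.

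Two smaller points. First, the ``iff'' in ``$\tc(X)\ge n$ iff there is an essential class in degree $n-1$'' is only an implication in one direction; the converse is not claimed anywhere in the paper and is not needed. Second, you would also need a concrete nonzero zero-divisor to feed into the machine; in degree $n$ this is free (by $\cd(\pi\times\pi)=n$ there is a nonzero class, and $n>\cd(\pi)$ forces it to be a zero-divisor), whereas in degree $n-1$ neither nonvanishing nor the zero-divisor condition comes for free, since assumption (2) only gives $n-1\ge\cd(\pi)$. Your worry about torsion in finite cyclic centralisers is a non-issue here: a group with a finite aspherical complex is torsion-free, so every nontrivial cyclic centraliser is automatically infinite cyclic, which is exactly the form in which Theorem \ref{thm2} is stated and used.
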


We do not know examples of finitely presented groups $\pi$ such that $\cd(\pi\times \pi) =\cd(\pi)$, i.e. such that the assumption (2) of Theorem \ref{thm11} is violated. 
A. Dranishnikov \cite{Dra} constructed examples with $\cd(\pi_1\times\pi_2) <\cd(\pi_1)+\cd(\pi_2)$; see also \cite{MWD}, page 157. 
In  \cite{Dra} he also proved that $\cd(\pi\times\pi)= 2\cd(\pi)$ for any Coxeter group $\pi$. 

To state another main result of this paper we need to recall the notion of $\tc$-{\it weight} of cohomology classes as introduced in \cite{FG}; this notion is similar but not identical to the concept of $\tc$-weight introduced in \cite{Finv}, \S 4.5; both these notions were inspired by the notion of 
category weight of cohomology classes initiated by E. Fadell and S. Husseini \cite{FH}. 

\begin{definition}\label{defweight}
Let $\alpha \in H^\ast(X\times X, A)$ be a cohomology class, where $A$ is a local coefficient system on $X\times X$. We say that $\alpha$ has weight $k\ge 0$
(notation $\wgt (\alpha)=k$) 
if $k$ is the largest integer with the property that for any continuous map $f: Y\to X\times X$ (where $Y$ is a topological space) one has $f^\ast(\alpha)=0\in H^\ast(Y, f^\ast(A))$ provided the space $Y$ admits an open cover $U_1\cup U_2\cup\dots\cup U_k=Y$ such that each restriction map $f| U_j\, : \, U_j\to X\times X$ admits a continuous lift $U_j\to PX$ into the path-space fibration (\ref{fibration}).
%subset $U\subset X\times X$ with $\tc_X(U)\le k$ one has $\alpha\, |\, U=0$. 
\end{definition}

%\noindent
%Recall that the inequality $\tc_X(U)\le k$ means that $U$ can be covered by $k$ open subsets $$U=U_1\cup U_2\cup \dots \cup U_k\subset X\times X$$ such that 
%for each $U_i\subset X\times X$ the projections on the first and the second factors $U_i\to X$ are homotopic to each other. This is equivalent to the requirement that the path fibration $X^I\to X\times X$ has a continuous section over each set $U_i$. 
%Clearly, the existence of a nontrivial cohomology class $\alpha\in H^\ast(X\times X;A)$ with $\wgt (\alpha)\ge k$ implies that $\tc(X)\ge  k+1$. 

A cohomology class $\alpha\in H^\ast(X\times X, A)$ has a positive weight $\wgt(\alpha)\ge 1$ if and only if $\alpha$ is a {\it zero-divisor}, i.e. if its restriction to the 
diagonal $\Delta_X\subset X\times X$  vanishes,
$$0=\alpha\, |\, {\Delta_X}\in H^\ast(X, \tilde A),$$ 
see \cite{FG}, page 3341. Here $\tilde A$ denotes the restriction local system $A\, |\,  {\Delta_X}$. Note that in \cite{FG} the authors considered untwisted coefficients but all the arguments automatically extend to general local coefficient systems. In particular, by Proposition 2 from \cite{FG} we have
\begin{eqnarray}\label{add}\wgt(\alpha_1\cup \alpha_2) \ge \wgt(\alpha_1) + \wgt(\alpha_2)\end{eqnarray}
for cohomology classes $\alpha_i\in H^{d_i}(X\times X, A_i)$, $i=1, 2$, where the cup-product $\alpha_1\cup \alpha_2$ lies in $H^{d_1+d_2}(X\times X, A_1\otimes_\Z A_2)$.

\begin{theorem}\label{thm2} Let $X$ be a connected aspherical finite cell complex. Suppose that the fundamental group $\pi=\pi_1(X)$ is such that 
the centraliser of any nontrivial element $g\in \pi$ is infinite cyclic. Then any degree $n$ zero-divisor $\alpha\in H^n(X\times X, A)$, where $n\ge 1$, has weight 
$\wgt(\alpha)\ge n-1$. 
\end{theorem}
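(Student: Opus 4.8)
The plan is to reduce the statement to a product estimate for the $\tc$-weight of a single \emph{canonical class}, and then to establish a vanishing result for the obstructions to factoring powers of that class through coefficient homomorphisms. First I would recall from \cite{CF} the canonical class $\vv \in H^1(X\times X, I)$, where $I = \ker(\Z[\pi\times\pi]\to \Z[\pi])$ is the augmentation-type local system associated to the diagonal, and note that $\vv$ is a zero-divisor, hence $\wgt(\vv)\ge 1$. The key algebraic input is that every zero-divisor $\alpha\in H^n(X\times X, A)$ is obtained from the $n$-th power $\vv^n\in H^n(X\times X, I^{\otimes n})$ by a coefficient homomorphism $I^{\otimes n}\to A$ followed by the induced map on cohomology; this is the precise sense in which $\vv$ is \emph{canonical}, and it is exactly the notion of \emph{essential} class referred to in the abstract. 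Granting this, naturality of $\tc$-weight under coefficient homomorphisms gives $\wgt(\alpha)\ge \wgt(\vv^n)$, so the whole theorem reduces to showing $\wgt(\vv^n)\ge n-1$.

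To bound $\wgt(\vv^n)$ from below I would \emph{not} simply invoke subadditivity \eqref{add}, which only yields $\wgt(\vv^n)\ge n\cdot\wgt(\vv)\ge n$ once we know $\wgt(\vv)\ge 1$ — but that would give the stronger (false in general) bound, so the subtlety is that $\vv^n$ itself may vanish, and the content is to control \emph{when} it vanishes, equivalently to compute $\wgt$ of the classes that survive. The right framework is the spectral sequence alluded to in the abstract, built from the filtration of $X\times X$ by the Schwarz genus of the path fibration (the sets $U_1\cup\dots\cup U_k$), whose $E_1$-page encodes $H^\ast$ with the successive relative coefficient systems $I^{\otimes j}$. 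The differentials out of the relevant line are precisely the obstructions to $\vv^n$ being non-essential at weight $n-1$; a class has $\wgt\ge n-1$ iff it is killed by the first $n-2$ rounds of these obstructions.

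The heart of the argument — and the step I expect to be the main obstacle — is the \emph{vanishing property} of this spectral sequence under hypothesis that every nontrivial centraliser is infinite cyclic. Here the point is that $I^{\otimes j}$, as a $\Z[\pi\times\pi]$-module, is built from the $\pi\times\pi$-set $\pi$ with the diagonal conjugation action, whose stabilisers are exactly the centralisers $C_\pi(g)$; the assumption that these are infinite cyclic forces the relevant cohomology $H^\ast(\pi\times\pi, I^{\otimes j})$ to be concentrated in a controlled range (because a $K(C,1)$ with $C\cong\Z$ is a circle, contributing cohomology only in degrees $\le 1$), so that all potentially obstructing differentials land in zero groups once the total degree exceeds $n-1$. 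Making this precise requires (i) a Shapiro-type identification of the $E_1$ groups as a sum over conjugacy classes of $H^\ast(C_\pi(g)\times C_\pi(g),-)$, (ii) the $\cd=1$ dimension bound for infinite cyclic groups, and (iii) careful bookkeeping of which diagonal in the spectral sequence carries $\vv^n$ so that the surviving part is exactly the $\wgt\ge n-1$ piece. Finally, I would combine this with the reduction of the first paragraph: $\wgt(\alpha)\ge\wgt(\vv^n)\ge n-1$ for every degree-$n$ zero-divisor, which is the assertion of the theorem. The hypothesis $n\ge 1$ is only needed so that $\vv^n$ is defined and the bound $n-1\ge 0$ is non-vacuous.
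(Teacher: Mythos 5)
Your proposal contains a genuine gap at its central step. You assert, as the ``key algebraic input,'' that every zero-divisor $\alpha\in H^n(X\times X,A)$ is obtained from $\vv^n$ by a coefficient homomorphism, i.e.\ that every zero-divisor is \emph{essential}. This is false, and the paper says so explicitly immediately after introducing the notion of essential classes: for $n\ge 2$ a zero-divisor need not be essential (and Theorem \ref{abel} gives abelian examples where zero-divisors of degree $>\cd(\pi)$ are never essential). The reason the centraliser hypothesis does not rescue this is visible in the structure of Corollary \ref{corfinal}: the vanishing $E_0^{r,s}=0$ holds only for $r>1$, $s\ge1$; the column $r=1$ survives (each $N_C\cong\Z$ contributes $H^1$). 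Consequently, of the $n$ obstructions $j_0(\alpha),\dots,j_{n-1}(\alpha)$, the first $n-1$ vanish automatically (the zeroth because $\alpha$ is a zero-divisor, the next $n-2$ because they sit in the trivial groups $E_s^{n-s,s}$ with $n-s>1$), but the last obstruction $j_{n-1}(\alpha)\in E_{n-1}^{1,n-1}$ lies in a potentially nonzero group and has no reason to vanish. So you only get $\alpha\in D_{n-1}^{n,0}$, not $D_n^{n,0}$.

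The correct conclusion from the vanishing result is therefore weaker than essentiality: by Corollary \ref{coress}, $\alpha=\psi_\ast(\vv^{n-1}\cup u)$ for some $u\in H^1(\pi\times\pi,\Hom_\Z(I^{n-1},A))$. This turns out to be enough. Since $\vv$ is a zero-divisor, $\wgt(\vv^{n-1})\ge n-1$ by (\ref{add}); and for any $f\colon Y\to X\times X$ as in Definition \ref{defweight} with $k=n-1$ one has $f^\ast(\vv^{n-1})=0$, hence $f^\ast(\alpha)=\psi_\ast\bigl(f^\ast(\vv^{n-1})\cup f^\ast(u)\bigr)=0$. So $\wgt(\alpha)\ge n-1$ without needing any control on $u$. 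You correctly anticipated both the spectral-sequence strategy and the Shapiro-type decomposition of the $E_0$-page over conjugacy classes with centraliser stabilisers, but the reduction to ``$\alpha$ factors through $\vv^n$'' overshoots; the argument you want factors $\alpha$ through $\vv^{n-1}$ times an auxiliary degree-one class. (Two smaller inaccuracies: $I$ in the paper is the augmentation ideal of $\Z[\pi]$, i.e.\ $\ker(\Z[\pi]\to\Z)$, viewed as a $\Z[\pi\times\pi]$-module, not $\ker(\Z[\pi\times\pi]\to\Z[\pi])$; and the spectral sequence here is the algebraic exact couple coming from splicing the short exact sequences (\ref{s}), not a filtration of $X\times X$ by Schwarz genus.)
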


For obvious reasons this theorem is automatically true for $n=1, 2$; it becomes meaningful only for $n>2$. 

%Theorem \ref{thm1} easily follows from Theorem \ref{thm2}. %\footnote{Here we assume that $\cd(\pi\times\pi)>\cd(\pi)$}. 
%Let  $n$ denote $\cd(\pi\times \pi)$. One may find a local coefficient system $A$ over 
%$X\times X$ (where $X=K(\pi, 1)$) and a nontrivial cohomology class $\alpha\in H^n(X\times X, A)$. 
%The class $\alpha$ is a zero-divisor since $n=\cd(\pi\times\pi)>\cd(\pi)$. 
%Applying Theorem \ref{thm2} we find $\wgt(\alpha)\ge n-1$. Thus $\tc(X)\ge n$; together with the inequality (\ref{general}) this confirms the claim of Theorem \ref{thm1}. 

Here is a useful corollary of Theorem \ref{thm2}:

\begin{theorem}\label{cor4}
Under the assumptions of Theorem \ref{thm11}, one has $$\vv^{n-1}\not=0\in H^{n-1}(\pi\times\pi, I^{n-1})$$ where $n=\cd(\pi\times\pi)$. 
Here $\vv\in H^1(\pi\times\pi, I)$ denotes the canonical class, see \S \ref{secan} below. 
\end{theorem}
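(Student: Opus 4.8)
The plan is to combine the dimensional lower bound for $\tc$ with the weight estimate of Theorem \ref{thm2}. First I would recall the standard cohomological characterisation of topological complexity in terms of zero-divisor cup-length with local coefficients: if there exist zero-divisors $\alpha_1,\dots,\alpha_k \in H^*(X\times X, A_i)$ (i.e. classes restricting trivially to the diagonal) with $\alpha_1\cup\cdots\cup\alpha_k\neq 0$, then $\tc(X)\ge k+1$. Conversely, a class of weight $\ge k$ vanishes on any space covered by $k$ open sets admitting motion-planning lifts, so if $\tc(X)=k$ then every cohomology class of weight $\ge k$ must vanish on $X\times X$ itself. The relevant input from the hypotheses of Theorem \ref{thm11} is that $\tc(X)\le \cd(\pi\times\pi)+1 = n+1$ by the general dimensional upper bound \eqref{general}.

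Next I would bring in the canonical class $\vv\in H^1(\pi\times\pi, I)$, where $I$ is the augmentation ideal appearing in the relevant short exact sequence of $\Z[\pi\times\pi]$-modules defining $\vv$ (as recalled in \S\ref{secan}). By construction $\vv$ is a zero-divisor: its restriction to the diagonal vanishes. Since the centraliser of every nontrivial element of $\pi$ is infinite cyclic — precisely the hypothesis of Theorem \ref{thm2} — every degree $n$ zero-divisor has weight at least $n-1$; in particular $\wgt(\vv)\ge 0$, and more to the point, using the additivity of weight \eqref{add}, $\wgt(\vv^{n-1})\ge (n-1)\cdot\wgt(\vv)$. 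This alone is not quite enough, so instead I would argue directly: it suffices to show $\vv^{n-1}\neq 0$, and for this I would show that if $\vv^{n-1}=0$ then one could deduce $\tc(X)\le n-1$, contradicting that $\cd(\pi\times\pi)=n>\cd(\pi)$ forces $\tc(X)\ge \cd(\pi\times\pi)$ — wait, that is exactly what we are trying to prove. So the logic must run the other way: Theorem \ref{thm2} tells us $\wgt(\vv)\ge 1$ (it is a nonzero-weight class, being a zero-divisor, by the remark before \eqref{add}), hence $\wgt(\vv^{n-1})\ge n-1$. Now suppose $\vv^{n-1}=0$. Then we lose information; the real content is that $\vv^{n-1}$ has weight $\ge n-1$ and we want it nonzero. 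The cleanest route: Theorem \ref{thm2} applied to $\alpha = \vv^{n-1}\in H^{n-1}(X\times X, I^{n-1})$ (which is a zero-divisor by \eqref{add} and the fact that $\vv$ restricts to zero on the diagonal) gives $\wgt(\vv^{n-1})\ge n-2$ automatically; the substantive claim $\vv^{n-1}\neq 0$ instead follows by noting that $H^{n-1}(\pi\times\pi, I^{n-1})$ computes, via the long exact sequences induced by the defining extension of $I$, a piece of $H^n(\pi\times\pi;\Z[\pi\times\pi])$, which is nonzero exactly in top dimension $n=\cd(\pi\times\pi)$; the image of the top power of $\vv$ there is the obstruction detecting $\tc\ge n+1$ in the nicest cases, and its nonvanishing in $H^{n-1}(\pi\times\pi,I^{n-1})$ is forced by $n>\cd(\pi)$, since the obstruction to $\vv^{n-1}$ lifting or its vanishing lives in groups of cohomological dimension bounded by $\cd(\pi\times\pi)$ but whose relevant summands vanish once one exceeds $\cd(\pi)$.

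Concretely, the key steps in order would be: (i) identify $\vv$ and the modules $I^{k}$, and verify $\vv^{k}$ is a zero-divisor in $H^{k}(X\times X, I^{k})$ for all $k$; (ii) use the short exact sequences $0\to I^{k+1}\to \Z[\pi\times\pi]\otimes(\text{something})\to I^{k}\to 0$ to set up a tower of connecting maps, expressing the reduction of $\vv^{n-1}$ in $H^{n-1}(\pi\times\pi,I^{n-1})$ in terms of the product structure; (iii) invoke Theorem \ref{thm2} to get $\wgt(\vv^{n-1})\ge n-1$, so that if $\vv^{n-1}=0$ there is no obstruction coming from this class, and then show that the hypotheses $\cd(\pi\times\pi)=n$ and $\cd(\pi\times\pi)>\cd(\pi)$ force the relevant cohomology group to be detected precisely by $\vv^{n-1}$ — using that $H^*(\pi\times\pi;-)$ in degrees $>\cd(\pi)$ along the diagonal direction must be carried by the $\pi\times\pi$-free part, where $\vv^{n-1}$ is nonzero; (iv) conclude $\vv^{n-1}\neq 0 \in H^{n-1}(\pi\times\pi, I^{n-1})$.

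The main obstacle I anticipate is step (iii): showing that the nonvanishing of the top zero-divisor power is \emph{forced}, rather than merely consistent, given only the weight bound $\wgt(\vv^{n-1})\ge n-1$ from Theorem \ref{thm2} together with $\cd(\pi\times\pi)>\cd(\pi)$. The weight estimate gives an upper bound on $\tc$-type information only if the class is nonzero, so one needs an independent argument that $\vv^{n-1}$ survives — presumably by analysing the Bockstein-type long exact sequences for the filtration of $\Z[\pi\times\pi]$ by powers of $I$ and exploiting that $\cd(\pi)<n$ kills the obstruction groups that could make $\vv^{n-1}$ vanish. Making that dimension-counting precise, and correctly tracking which local coefficient modules appear, is where the real work lies; everything else (the zero-divisor property, additivity of weight, the dimensional upper bound) is formal.
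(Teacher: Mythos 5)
Your proposal correctly senses where the difficulty lies — you observe, quite rightly, that knowing $\wgt(\vv^{n-1})\ge n-1$ does not by itself show $\vv^{n-1}\neq 0$, since the weight estimate is vacuous for a zero class. You also correctly rule out the circular route through $\tc$ lower bounds. But having located the gap, you do not fill it: steps (iii)--(iv) gesture at "dimension-counting" and at "the relevant cohomology group being detected by $\vv^{n-1}$" without producing a mechanism, and you explicitly flag this as an unresolved obstacle. So the proposal stops short of a proof.

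The missing idea is the factorization established inside the proof of Theorem \ref{thm2}, not the weight conclusion of that theorem. Concretely: since $n=\cd(\pi\times\pi)$, there exists a module $A$ and a \emph{nonzero} class $\alpha\in H^n(\pi\times\pi,A)$. Because $n>\cd(\pi)$, its restriction to the diagonal lands in $H^n(\pi,\tilde A)=0$, so $\alpha$ is automatically a zero-divisor. Now one does not apply Theorem \ref{thm2} as a black box to $\vv^{n-1}$; one re-runs its proof on $\alpha$: the vanishing $E_0^{r,s}=0$ for $r>1,\, s\ge 1$ (Corollary \ref{corfinal}, which uses that centralisers are cyclic) forces $\alpha\in D_{n-1}^{n,0}$, and Corollary \ref{coress} then gives the factorization $\alpha=\psi_\ast(\vv^{n-1}\cup u)$ for some $u\in H^1(\pi\times\pi,\Hom_\Z(I^{n-1},A))$. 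Since $\alpha\neq 0$, this forces $\vv^{n-1}\neq 0$. In other words, the paper proves the nonvanishing of $\vv^{n-1}$ not by direct inspection but by exhibiting a nonzero class that necessarily passes through it; your proposal never isolates this factorization, which is the entire content of the argument.
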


The statement of Theorem \ref{cor4} becomes false if we remove the assumptions on the fundamental group. 
For example in the case of an abelian group $\pi=\Z^k$ (see \S \ref{abel}) we have $n =\cd(\pi\times\pi) = 2k$ and $\vv^k$ is the highest nontrivial 
power of the canonical class.  
\vskip 0.3cm

{\bf Question}: Let $\pi$ be a noncommutative hyperbolic group and let $n$ denote $\cd(\pi\times\pi)$. 
Is it true that the $n$-th power of the canonical class $\vv^n\in H^n(\pi\times\pi, I^n)$ is nonzero, $\vv^n\not=0$? 
\vskip 0.3cm

A positive answer to this question would imply that for any noncommutative hyperbolic group $\pi$ one has $\tc(K(\pi, 1))=\cd(\pi\times\pi) +1$. 
\vskip 0.3cm
The proofs of Theorems \ref{thm1}, \ref{thm11}, \ref{thm2} and \ref{cor4} are given in \S  \ref{proofs}. 

In \S \ref{sym}, we present an application of Theorem \ref{thm2} to the topological complexity of symplectically aspherical manifolds.

\section{The canonical class}\label{secan}

First we fix notations which will be used in this paper.
We shall consider a discrete torsion-free group $\pi$ with unit element $e\in \pi$ and left modules $M$ over the group ring $\Z[\pi\times\pi]$. 
Any such module $M$ can be equivalently viewed as a $\pi-\pi$-bimodule using the convention $$(g, h)\cdot m= gmh^{-1}$$ for $g, h\in \pi$ and $m\in M$.  
Recall that for two left 
$\Z[\pi\times\pi]$-modules $A$ and $B$ the module $\Hom_{\Z}(A, B)$ has a canonical $\Z[\pi\times\pi]$-module structure given by $((g, h)\cdot f)(a) = gf(g^{-1}ah)h^{-1}$ where
$g, h\in \pi$, $a\in A$ and $f:A\to B$ is a group homomorphism. 

Besides, the tensor product $A\otimes_\Z B$ has a left $\Z[\pi\times\pi]$-module structure given by 
$(g, h)\cdot (a\otimes b) = (gah^{-1})\otimes (gbh^{-1})$ where $g, h\in \pi$ and $a\in A$, $b\in B$;
we shall refer to this action as {\it the diagonal action}. 

For a left 
$\Z[\pi\times\pi]$-module $A$ we shall denote by $\tilde A$ the same abelian group viewed as a $\Z[\pi]$-module via {\it the conjugation action}, i.e. 
$g\cdot a= gag^{-1}$ for $g\in \pi$ and $a\in A$.

The group ring $\Z[\pi]$ is a $\Z[\pi\times\pi]$-module with respect to the action 
$$(g, h)\cdot a= gah^{-1}, \quad \mbox{where}\quad g, h, a\in \pi.$$
The augmentation homomorphism 
$\epsilon: \Z[\pi]\to \Z$
is a $\Z[\pi\times\pi]$-homomorphism where we consider the trivial $\Z[\pi\times\pi]$-module structure on $\Z$. The {\it augmentation ideal} $I=\ker \epsilon$ is hence a $\Z[\pi\times\pi]$-module and we have a short exact sequence of $\Z[\pi\times\pi]$-modules
\begin{equation}\label{can}
0\to I\to \Z[\pi]\stackrel\epsilon\to \Z\to 0.\end{equation}

In this paper we shall use the the formalism (described in \cite{HS}, Chapter IV, \S 9)
which associates a well defined class 
$$\theta\in \Ext^n_R(M, N)$$  with any exact sequence 
$$0\to N\to L_n\to L_{n-1}\to \dots\to L_1\to M\to 0$$ of left $R$-modules and $R$-homomorphisms, where $R$ is a ring. This construction can be briefly summarised as follows.
If $$ \, \, \, \dots\to C_2\to C_1\to C_0\to M\to 0$$ is a projective resolution of $M$ over $R$, one obtains a commutative diagram
\begin{equation*}
\begin{array}{crcccccccccccc}
C_{n+1} & \to & C_n& \to & C_{n-1} & \to &\dots & C_1 & \to & C_0 & \to & M& \to &0\\ 
\downarrow && {} \downarrow f && \downarrow && \dots & \downarrow && \downarrow && {} \downarrow = &&\\ 
0& \to & N &\to & L_n &\to  &\dots & L_2 &\to  & L_1& \to & M & \to & 0.
\end{array}
\end{equation*}
The homomorphism $f: C_n \to N$ is a cocycle of the complex $\Hom_R(C_\ast, N)$, which is defined uniquely up to chain homotopy. 
The class $\theta$ is the cohomology class of this cocycle
$$\theta\, =\,  \{f\}\in H^n(\Hom_R(C_\ast, N))=\Ext^n_R(M, N).$$
Note that the definition of Bourbaki (see \cite{BB} \S 7, n. 3) is slightly different since \cite{BB} introduces additionally a sign factor $(-1)^{n(n+1)/2}$.

An important role plays the class 
$${\vv}\in \Ext_{\Z[\pi\times\pi]}^1(\Z, I)=H^1(\pi\times\pi, I)$$
associated with the exact sequence (\ref{can}). 
It was introduced in \cite{CF} under the name of {\it the canonical class}. 

To describe the cocycle representing the canonical class $\vv$ consider the bar resolution $C_\ast$ of $\Z$ over $\zpp$, see \cite{Brown}, page 19. Here
$\dots \to C_1\stackrel{d}\to C_0\to \Z\to 0$ where $C_0$ is a free $\zpp$-module generated by the symbol $[\, \,]$ and $C_1$ is the free $\zpp$-module generated 
by the symbols $[(g, h)]$ for all $(g, h)\in \pi\times \pi$. The boundary operator $d$ acts by 
$$d[(g, h)]= \left((g,h)-1\right)[\,\,].$$
We obtain the chain map 
$$
\begin{array}{ccccccccc}
C_2 & \to & C_1& \stackrel d \to& C_0& \to & \Z&\to &  0 \\
\downarrow &&{\, } \downarrow f& & {\,} \downarrow \mu& & {\, }\downarrow =&&\\
0&  \to & I&\to &\zp &\stackrel{\epsilon}\to & \Z&\to &0
\end{array}
$$
where $\mu([\, \, ])=1$ and 
\begin{equation}\label{cocyclev}
f([(g, h)]) = gh^{-1}-1 \in I.
\end{equation}
Thus, the cocycle $f:C_1\to I$ is given by the crossed homomorphism (\ref{cocyclev}).
Comparing with \cite{CF}, we see that the definition of the canonical class given above coincides with the definition 
given in \cite{CF}, page 110. 

We shall also describe the cocycle representing the canonical class in {\it the homogeneous standard resolution} of $\pi\times \pi$, see \cite{Brown}, page 18:
\begin{equation}\label{homstan}
\dots \to C'_2\stackrel{d} \to C'_1\stackrel{d}\to C'_0\to \Z\to 0.
\end{equation} Here $C'_i$ is a free $\Z$-module generated by the $(i+1)$-tuples $((g_0, h_0), \dots, (g_i, h_i))$ with
$g_j, h_j\in \pi$ for any $j=0, 1, \dots, i$. Using (\ref{cocyclev}) we obtain that the cocycle $f': C'_1\to I$ representing the canonical class $\vv$ is given by the formula
\begin{equation}\label{cocyclevv}
f'((g_0, h_0), (g_1, h_1)) = g_1h_1^{-1} - g_0h_0^{-1}, \quad g_j, h_j\in \pi.
\end{equation}

The canonical class $\vv$ is closely related to {\it the Berstein-Schwarz class} (see \cite{Ber}, \cite{DR}); the latter is crucial for the study of the Lusternik-Schnirelmann category ${\sf {cat}}$. The Berstein-Schwarz class can be defined as the class $$\vp\in \Ext^1_{\Z[\pi]}(\Z, I)=H^1(\pi, I)$$ which corresponds to the exact sequence (\ref{can}) viewed as a sequence of left $\Z[\pi]$-modules via the left action of $\pi=\pi\times 1\subset \pi\times \pi$. For future reference we state 
\begin{equation}\label{v2b}
\vv \, |\, \pi\times 1 \, =\, \vp;
\end{equation}
here $\pi\times 1\subset \pi\times \pi$ denotes the left factor viewed as a subgroup. 

The main properties of the canonical class $\vv$ are as follows. 

Let $X$ be a finite
connected cell complex with fundamental group $\pi_1(X)=\pi$. 
We may view $I$ as a local coefficient system over $X\times X$ and form the 
cup-product $\vv\cup\vv\cup\dots\cup \vv=\vv^{k}$ ($k$ times) which lies in the cohomology group 
$$\vv^{k} \, \in \, H^k(X\times X, I^{k}),$$ where $I^{k}$ denotes the tensor product $I\otimes_\Z I\otimes_\Z \dots \otimes_\Z I$ of $k$ copies of $I$ viewed as a left $\Z[\pi\times\pi]$-module via the diagonal action as explained above. 
Let $n$ denote the dimension of $X$. 
It is known that in general the topological complexity satisfies $\tc(X)\le 2n +1$ and the equality 
$$\tc(X)\, =\, 2\dim(X) +1=2n+1$$
happens if and only if $\vv^{2n}\not=0$; see \cite{CF}, Theorem 7. 

Another important property of $\vv$ is that it is {\it a zero-divisor}, i.e. 
\begin{equation}\label{can1}
\vv\, |\, \Delta_\pi=0\, \in \, H^1(\pi, \tilde I)
\end{equation}
where $\Delta_\pi \subset \pi\times \pi$ is the diagonal subgroup, $\Delta_\pi=\{(g, g);g\in \pi\}$. This immediately follows from the observation that 
the cocycle $f$ representing $\vv$ (see (\ref{cocyclev})) vanishes on the diagonal $\Delta_\pi$. 

Our next goal is to describe an exact sequence representing the power $\vv^n$ of the canonical class. 
The exact sequence (\ref{can}) splits over $\Z$ and hence for any left $\Z[\pi\times\pi]$-module $M$ tensoring over $\Z$ we obtain an exact sequence
\begin{equation}\label{timesm}
0\to I\otimes_\Z M\to \Z[\pi]\otimes_\Z M\stackrel\epsilon\to M\to 0.
\end{equation}
In (\ref{timesm}) we consider the diagonal action of $\pi\times\pi$ on the tensor products.
Taking here $M=I^s$ where $I^s=I\otimes_\Z I \otimes_\Z  I \dots \otimes_\Z I$ we obtain a short exact sequence
\begin{equation}\label{s}
0\to I^{s+1}\stackrel{i\otimes 1}\to \zp\otimes_\Z I^s\stackrel {\epsilon\otimes 1}\to I^s\to 0.
\end{equation}
Here $i:I\to \zp$ is the inclusion and $\epsilon: \zp\to \Z$ is the augmentation. 
Splicing exact sequences (\ref{s}) for $s=0, 1, \dots, n-1$ we obtain an exact sequence
\begin{equation}\label{vn}
0\to I^n \to \zp\otimes_\Z I^{n-1} \to \zp\otimes_\Z I^{n-2} \to\dots \zp\otimes_\Z I\to \zp\to \Z \to 0.
\end{equation}
\begin{lemma}\label{vvrepresented}
The cohomology class $$\vv^n\in H^n(\pi\times \pi, I^n)= \Ext^n_\zpp (\Z, I^n)$$
is represented by the exact sequence (\ref{vn}). 
\end{lemma}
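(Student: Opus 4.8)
The plan is to verify that the spliced exact sequence \eqref{vn} represents $\vv^n$ by an induction on $n$, using the multiplicativity of the Yoneda product under splicing together with the fact (established by \eqref{cocyclev}) that the single sequence \eqref{can} represents $\vv$. The key point to exploit is that, in the formalism of \cite{HS}, Chapter IV, \S 9, the class obtained from a spliced sequence is the Yoneda composite of the classes of the pieces: if $\theta_1\in\Ext^k_R(M',M)$ is represented by a $k$-fold extension $0\to M\to L_k\to\dots\to L_1\to M'\to 0$ and $\theta_2\in\Ext^1_R(M'',M')$ is represented by $0\to M'\to L_0\to M''\to 0$, then the spliced $(k{+}1)$-fold extension $0\to M\to L_k\to\dots\to L_1\to L_0\to M''\to 0$ represents the Yoneda product $\theta_1\circ\theta_2\in\Ext^{k+1}_R(M'',M)$. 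Granting this, I would argue as follows.

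First I would record the base case: by the discussion around \eqref{cocyclev}, the sequence \eqref{can}, i.e. \eqref{s} with $s=0$, represents $\vv\in\Ext^1_\zpp(\Z,I)$. Next, for the inductive step, I claim that the single short exact sequence \eqref{s},
$$0\to I^{s+1}\to \zp\otimes_\Z I^s\to I^s\to 0,$$
represents the class $\vv\cup(\mathrm{id}_{I^s})$ in $\Ext^1_\zpp(I^s,I^{s+1})$; more precisely, it is the image of $\vv\in\Ext^1_\zpp(\Z,I)$ under the map $\Ext^1_\zpp(\Z,I)\to\Ext^1_\zpp(I^s,I^{s+1})$ induced by $-\otimes_\Z I^s$ (with the diagonal $\pi\times\pi$-action). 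This is exactly the content of tensoring \eqref{can} with $I^s$ over $\Z$, which is legitimate since \eqref{can} is $\Z$-split, so the sequence stays exact; the naturality of the $\Ext$-class construction under tensoring an extension with a $\Z$-split module then identifies the class. Composing these via Yoneda product and using the inductive hypothesis that the spliced sequence \eqref{vn} truncated at level $n-1$ represents $\vv^{n-1}$, splicing on one more copy of \eqref{s} with $s=n-1$ yields the class $\vv^{n-1}\circ\vv=\vv^n$; here I would invoke \eqref{add}-style multiplicativity, or rather the purely algebraic fact that the cup product on $H^\ast(\pi\times\pi;-)$ with these diagonal coefficients coincides with the Yoneda product on $\Ext^\ast_\zpp(\Z,-)$ under the standard identification $H^\ast(\pi\times\pi;-)=\Ext^\ast_\zpp(\Z,-)$, so that splicing the representing extensions multiplies the classes.

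The main obstacle I anticipate is bookkeeping the coefficient modules and the diagonal actions carefully enough that the Yoneda composite lands in the correct module $I^n=I\otimes_\Z\dots\otimes_\Z I$ with the diagonal $\pi\times\pi$-action, and in verifying that the cup-product/Yoneda-product comparison is compatible with the various tensor identifications $I\otimes_\Z I^{s}\cong I^{s+1}$ — in particular that no sign discrepancy (of the sort flagged in the remark about Bourbaki's convention) intervenes. A clean way to sidestep most of this is to work directly with explicit cocycles: take the bar resolution $C_\ast$ of $\Z$ over $\zpp$, build the comparison chain map from $C_\ast$ to the complex $\zp\otimes_\Z I^{n-1}\to\dots\to\zp\to\Z$ appearing in \eqref{vn} by lifting the augmentation step by step, and check that the resulting cocycle $C_n\to I^n$ is chain-homotopic to the $n$-fold cup product of the cocycle \eqref{cocyclev}. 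This is a routine but slightly lengthy diagram chase, and I would present it as the technical heart of the proof while keeping the structural induction above as the conceptual skeleton.
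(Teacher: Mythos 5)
Your proposal lands on the paper's proof: the ``technical heart'' you describe --- constructing an explicit chain map from a resolution of $\Z$ over $\zpp$ into the spliced complex \eqref{vn} and identifying the resulting cocycle with the $n$-fold cup-product cocycle via the diagonal approximation --- is exactly what the paper does, writing down the explicit maps $\kappa_j((g_0,h_0),\dots,(g_j,h_j))=x_0\otimes(x_1-x_0)\otimes\cdots\otimes(x_j-x_{j-1})$ with $x_i=g_ih_i^{-1}$ over the homogeneous standard resolution, proving by induction on $j$ that these commute with the differentials, and reading off the cocycle $f_n=(x_1-x_0)\otimes\cdots\otimes(x_n-x_{n-1})$, so the only thing you leave unfinished is actually exhibiting that chain map rather than asserting the diagram chase is routine. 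The Yoneda/splicing ``conceptual skeleton'' you put up front is precisely the alternative that the paper acknowledges in the remark following the lemma by citing Theorems 4.2 and 9.2 of Mac Lane, Chapter VIII; the sign and coefficient-pairing bookkeeping you rightly flag is why the paper opts for the direct cocycle verification as its primary argument.
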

\begin{proof} Consider again the homogeneous standard resolution (\ref{homstan}) 
of $\pi\times \pi$. Define $\zpp$-homomorphisms
\begin{equation}\label{kapa}
\kappa_j: C'_j \to \zp\otimes_\Z I^j, \quad \mbox{where}\quad j=0, 1, \dots, n-1,
\end{equation}
by the formula
\begin{equation}\label{kj}
\kappa_j((g_0, h_0), (g_1, h_1), \dots, (g_j, h_j)) = x_0\otimes (x_1-x_0)\otimes \dots \otimes (x_j-x_{j-1}),
\end{equation}
where the symbol $x_i$ denotes $g_ih_i^{-1}\in \pi$ for $i=0, \dots, j$. 

We claim that the homomorphisms $\kappa_j$, for $j=0, \dots, n$, determine a chain map from the homogeneous standard resolution (\ref{homstan}) into the exact sequence (\ref{vn}). In other words, we want to show that 
\begin{equation}\label{kjj}
\kappa_{j-1} (d((g_0, h_0), (g_1, h_1), \dots, (g_j, h_j)))= (x_1-x_0)\otimes \dots \otimes (x_j-x_{j-1}).
\end{equation}
This statement is obvious for $j=1$. To prove it for $j>1$ we apply induction on $j$. 
Denoting $$\Pi_j(x_0, x_1, \dots, x_{j-1}) = x_0\otimes (x_1-x_0)\otimes \dots \otimes (x_{j-1}-x_{j-2})$$ we may write 
$$\kappa_{j-1} (d((g_0, h_0), (g_1, h_1), \dots, (g_j, h_j))) = \sum_{i=0}^j (-1)^i \Pi_j(x_0, \dots, \hat x_i, \dots , x_j).$$
The last two terms in this sum (for $i=j-1$ and $i=j$) sum up to 
$$(-1)^{j-1} x_0\otimes (x_1-x_0) \otimes \dots (x_{j-2}-x_{j-3})\otimes (x_j-x_{j-1}).$$
Thus we see that the LHS of (\ref{kjj}) can be written as 
$$\left[\sum_{i=0}^{j-1} (-1)^i \Pi_{j-1}(x_0, \dots, \hat x_i, \dots, x_{j-1})\right] \otimes (x_j-x_{j-1})$$
and our statement follows by induction. 

The homomorphism $f_n: C_n'\to I^n$ which appears in the commutative diagram
$$
\begin{array}{ccccccccc}
C'_{n+1} & \to & C'_n & \stackrel d \to & C'_{n-1} & \stackrel d \to & C'_{n-2} & \to & \dots\\
&& {\, \, }\downarrow f_n&&{\, \, }\downarrow \kappa_{n-1} && {\, \, } \downarrow \kappa_{n-2} && \\
0 & \to & I^n &  \to & \zp \otimes I^{n-1}  & \to & \zp \otimes I^{n-2}  & \to & \dots
\end{array}
$$
is given by the formula
\begin{equation}
f_n((g_0, h_0), (g_1, h_1), \dots, (g_n, h_n))=  
(x_1-x_0) \otimes (x_2-x_1)\otimes \dots (x_n-x_{n-1})
\end{equation}
where $x_i=g_ih_i^{-1}$. Since the cocycle representing $\vv$ is given by $x_1-x_0$ (see (\ref{cocyclevv})), using the diagonal approximation in the standard complex (see \cite{Brown}, page 108) we find that $f_n$ represents $\vv^n$. \end{proof}

\begin{remark} {\rm Lemma \ref{vvrepresented} also follows by applying Theorems 4.2 and 9.2 from \cite{Maclane}, Chapter VIII.
}
\end{remark} 

The canonical class $\vv$ allows to describe the connecting homomorphisms in cohomology as we shall exploit several times in this paper. 
Let $M$ be a left $\zpp$-module. 
%The sequence (\ref{can}) splits over $\Z$ and hence tensoring on $M$ it over $\Z$ gives an exact sequence
%\begin{equation}\label{timesm}
%0\to I\otimes_\Z M \to \zp\otimes M \to M\to 0.
%\end{equation}
The Bockstein homomorphism 
$$\beta: H^i(\pi\times\pi, M)\to H^{i+1}(\pi\times\pi, I\otimes M)$$ of the exact sequence (\ref{timesm})
acts as follows
\begin{equation}\label{cupm}
\beta(u)= \vv \cup u, \quad \mbox{for}\quad u\in H^i(\pi\times\pi, M).
\end{equation}
This follows from Lemma 5 from \cite{CF} and from \cite{Brown}, chapter V, (3.3).

\section{Universality of the Berstein - Schwarz class}

In the theory of Lusternik - Schnirelmann category an important role plays the following result which was originally stated (without proof) by 
A.S. Schwarz \cite{Sch}, Proposition 34. A recent proof can be found in \cite{DR}. 

\begin{theorem} \label{thm3} For any left $\Z[\pi]$-module $A$ and for any cohomology class $\alpha\in H^n(\pi, A)$  one may find a $\Z[\pi]$-homomorphism 
$\mu: I^n \to A$ such that $\alpha \, = \, \mu_\ast(\vp^n)$. \end{theorem}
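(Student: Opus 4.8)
The plan is to build on the $\Ext$-class formalism recalled just above the statement, exploiting the fact that the spliced exact sequence
\begin{equation*}
0\to I^n \to \Z[\pi]\otimes_\Z I^{n-1} \to \dots \to \Z[\pi]\otimes_\Z I \to \Z[\pi]\to \Z \to 0
\end{equation*}
(the analogue of (\ref{vn}) for the single factor $\pi$, obtained by splicing the sequences (\ref{s}) viewed as sequences of $\Z[\pi]$-modules via the left action) represents $\vp^n\in \Ext^n_{\Z[\pi]}(\Z, I^n)$. This is the exact analogue of Lemma \ref{vvrepresented}, and indeed the restriction formula (\ref{v2b}) together with naturality of cup products shows that the $\pi$-version of the chain map $\kappa_j$ from the proof of Lemma \ref{vvrepresented} represents $\vp^n$. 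The key structural observation is that each module $\Z[\pi]\otimes_\Z I^{s}$ appearing in this sequence is \emph{free} (equivalently, projective) as a $\Z[\pi]$-module, because $\Z[\pi]\otimes_\Z M$ with the diagonal action is isomorphic to $\Z[\pi]\otimes_\Z M_0$ with $\pi$ acting only on the first factor, where $M_0$ is the underlying abelian group of $M$ with trivial action (the isomorphism $g\otimes m\mapsto g\otimes g^{-1}m$).

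With that in hand, the argument runs as follows. Let $C_\ast\to \Z$ be a projective resolution of $\Z$ over $\Z[\pi]$ and let $\alpha\in H^n(\pi, A)$ be represented by a cocycle $c\colon C_n\to A$. Splice $C_\ast$ with $A$ to get the long exact sequence $0\to A\to C_{n-1}/\!\im\, d\oplus\cdots$—more precisely, the standard construction associating to $\alpha$ an $n$-fold extension of $\Z$ by $A$; call its class $\alpha\in\Ext^n_{\Z[\pi]}(\Z,A)$ under the isomorphism $\Ext^n\cong H^n$. Since the sequence (\ref{vn})-for-$\pi$ representing $\vp^n$ consists in its middle terms of projective modules, I can lift the identity map $\Z\to\Z$ to a chain map from (the truncation of) this exact sequence to any projective resolution of $\Z$, and dually compare it with the resolution $C_\ast$; this yields a $\Z[\pi]$-homomorphism $\mu\colon I^n\to A$ such that the pushout of the extension representing $\vp^n$ along $\mu$ is the extension representing $\alpha$. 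By functoriality of the $\Ext$/$H^\ast$ identification under pushforward in the coefficient variable, this says precisely $\mu_\ast(\vp^n)=\alpha$.

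Concretely, here are the steps I would carry out. First, establish the projectivity of $\Z[\pi]\otimes_\Z I^s$ via the untwisting isomorphism above; this makes the sequence representing $\vp^n$ into (almost) a projective resolution of $\Z$, truncated at stage $n$ with $I^n$ in the top spot. Second, given $\alpha$, choose a projective resolution and a cocycle $c\colon C_n\to A$ representing it, and recall (from the $\Ext$-formalism in \cite{HS}, Ch.~IV, \S 9, cited in the excerpt) that the class in $\Ext^n$ associated to this data agrees with $\alpha$ under the canonical isomorphism. Third, use the comparison theorem for the (partial) projective resolution $I^n\hookrightarrow \Z[\pi]\otimes I^{n-1}\to\cdots\to\Z[\pi]\to\Z$ against $C_\ast\to\Z$ to obtain a chain map lifting $\mathrm{id}_\Z$, in particular a map $\mu\colon I^n\to C_n$; compose with $c$ to land in $A$. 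Fourth, check that $\mu\colon I^n\to A$ is a cocycle-level realization of a pushforward, i.e. that $\mu_\ast$ sends the class of the $\vp^n$-extension to $\alpha$; this is a diagram chase using that the chain map restricts to $\mathrm{id}$ on $\Z$ and that pushforward of extensions corresponds to composition of the classifying cocycles.

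The main obstacle is the bookkeeping in the last step: one must be careful that the two competing descriptions of cohomology classes — as homotopy classes of cocycles on a projective resolution, and as Yoneda/Baer classes of $n$-fold extensions — are matched up with compatible sign and direction conventions (the excerpt already warns about the Bourbaki sign discrepancy), and that $\mu_\ast$ really does implement the pushout on extension classes rather than some dual operation. A clean way around this is to avoid extensions entirely and work purely with cochain maps: realize $\vp^n$ by the explicit cocycle $f'_n$ analogous to (\ref{cocyclevv}) on the standard resolution, observe that the comparison map of resolutions induces the needed factorization $\alpha = \mu\circ f'_n$ up to coboundary, and read off $\mu$. Either way the content is the universal property of a projective resolution together with the projectivity of the middle terms of the $\vp^n$-sequence; no genuinely hard analysis is involved, only careful diagram chasing.
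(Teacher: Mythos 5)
Your key structural observation --- that each $\Z[\pi]\otimes_\Z I^{s}$ with the diagonal action is free over $\Z[\pi]$, via the untwisting isomorphism $g\otimes m\mapsto g\otimes g^{-1}m$ --- is exactly the engine of the paper's proof (it is the content of the reference to Brown, Ch.~III, Corollary~5.7), and your identification of $\vp^n$ with the class of the spliced sequence is the $\pi$-version of Lemma~\ref{vvrepresented} together with (\ref{v2b}), which the paper uses in precisely the same way. So the main ingredients are in place.

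However, your Step~3 as written has a gap. You propose to lift $\mathrm{id}_\Z$ to a chain map \emph{from} the partial sequence $I^n\hookrightarrow\Z[\pi]\otimes_\Z I^{n-1}\to\dots\to\Z[\pi]\to\Z$ \emph{into} an arbitrary projective resolution $C_\ast\to\Z$, and read off $\mu\colon I^n\to C_n$. The comparison theorem builds such a chain map through degree $n-1$ because those source terms $\Z[\pi]\otimes_\Z I^{j}$ are projective; but at the top the source is $I^n$, which in general is \emph{not} projective (already for $\pi=\Z^2$, $n=1$: if $I$ were projective then $\mathrm{cd}(\pi)\le 1$). Thus the induced map from $I^n$ into the cycles of $C_{n-1}$ need not lift through the surjection $C_n\twoheadrightarrow\ker(C_{n-1}\to C_{n-2})$, and the desired $\mu\colon I^n\to C_n$ need not exist. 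Your worries about matching the Yoneda and cochain conventions are downstream of this: the whole comparison between two resolutions is the source of the trouble.

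The paper avoids both problems at once by \emph{not} introducing a second resolution. Extend the $\vp^n$-sequence itself to a full free resolution
$$\dots\to\Z[\pi]\otimes_\Z I^{n+1}\to\Z[\pi]\otimes_\Z I^{n}\to\Z[\pi]\otimes_\Z I^{n-1}\to\dots\to\Z[\pi]\to\Z\to 0,$$
freeness coming from exactly the untwisting you noted, and compute $H^n(\pi,A)$ directly from it. A degree-$n$ cocycle is then a $\Z[\pi]$-map $f\colon\Z[\pi]\otimes_\Z I^n\to A$ that kills the image of the incoming differential, i.e.\ kills $I^{n+1}\subset\Z[\pi]\otimes_\Z I^n$; since $\bigl(\Z[\pi]\otimes_\Z I^n\bigr)/I^{n+1}\cong I^n$, cocycles are in natural bijection with $\Z[\pi]$-homomorphisms $\mu\colon I^n\to A$. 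Under this bijection $\mu=\mathrm{id}_{I^n}$ corresponds, by Lemma~\ref{vvrepresented} and (\ref{v2b}), to $\vp^n$, and naturality in the coefficient module then gives $\mu_\ast(\vp^n)=\alpha$ for the $\mu$ attached to any cocycle representing $\alpha$. No comparison of resolutions and no tracking of signs between the extension and cochain pictures is needed.
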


Recall that we view the tensor power $I^n = I\otimes_\Z I \otimes_\Z \dots\otimes_\Z I$ as a left $\Z[\pi]$-module using the diagonal action of $\pi$ from the left, i.e. 
$g\cdot (\alpha_1\otimes\alpha_2\otimes\dots\otimes\alpha_n) = g\alpha_1\otimes g\alpha_2\otimes\dots\otimes g\alpha_n$ where $g\in \pi$ and $\alpha_i\in I$ for $i=1, \cdots, n$.

In other words, Theorem \ref{thm3} states that the powers of the Berstein - Schwarz class $\vp^n$ are universal in the sense that any other degree $n$ cohomology class can be obtained from $\vp^n$ by a coefficient homomorphism. This result implies that the Lusternik - Schnirelmann category of an aspherical space is at least $\cd(\pi)+1$. 

We include below a short proof of Theorem \ref{thm3} (following essentially \cite{DR}) for completeness. 
\begin{proof}
First one observes that $I^s$ is a free abelian group and hence $\Z[\pi]\otimes_\Z I^s$ is free as a left $\Z[\pi]$-module; here we apply Corollary 5.7 from chapter III of \cite{Brown}. Hence the exact sequence
\begin{equation}\label{resolution}
\dots \to \Z[\pi]\otimes_\Z I^{n}\to  \Z[\pi]\otimes_\Z I^{n-1} \to \dots \to \Z[\pi]\otimes_\Z I\to  \Z[\pi]\to \Z\to 0
\end{equation}
 is a free resolution of $\Z$ over $\Z[\pi]$. 
The differential of this complex is given by
$$\Z[\pi]\otimes_\Z I^n = \Z[\pi]\otimes_\Z I\otimes_\Z I^{n-1} \stackrel{\epsilon \otimes i \otimes 1}\longrightarrow \Z\otimes_\Z \Z[\pi]\otimes_\Z I^{n-1} = \Z[\pi]\otimes_\Z I^{n-1};$$
here $\epsilon$ is the augmentation and $i: I\to \Z[\pi]$ is the inclusion. 

Using resolution (\ref{resolution}), any degree $n$ cohomology class $\alpha\in H^n(\pi, A)$ can be represented by an $n$-cocycle 
$f: \Z[\pi]\otimes_\Z I^n\to A$ which is a $\Z[\pi]$-homomorphism vanishing on the image $I^{n+1}$ of the boundary homomorphism $\Z[\pi]\otimes_\Z I^{n+1}\to \Z[\pi]\otimes_\Z I^n$. In view of the short exact sequence
$$0\to I^{n+1}\to \Z[\pi]\otimes_\Z I^n \stackrel{\epsilon\otimes 1}\to I^n\to 0$$
we see that {\it there is a 1-1 correspondence between cocycles $f: \Z[\pi]\otimes_\Z I^n\to A$ and homomorphisms $\mu: I^n\to A$. }

Let $\mu: I^n \to A$ be the $\Z[\pi]$-homomorphism corresponding to a cocycle representing the class $\alpha$. 

Using the definition of a class associated to an exact sequence (see beginning of \S \ref{secan}), Lemma \ref{vvrepresented} and formula (\ref{v2b}) we see that the identity map $I^n\to I^n$ corresponds to the $n$-th power of the Berstein - Schwarz class $\vp^n$. Combining all these mentioned results we obtain $\mu_\ast(\vp^n)=\alpha$. 
\end{proof}

\section{Essential cohomology classes}

It is easy to see that the analogue of Theorem \ref{thm3} fails when we consider cohomology classes $\alpha\in H^n(\pi\times \pi, A)$ and ask 
whether such classes can be  obtained from powers of the canonical class $\vv\in H^1(\pi\times \pi, I)$ by coefficient homomorphisms. 
The arguments of the proof of Theorem \ref{thm3} are not applicable since the $\Z[\pi\times\pi]$-modules $\Z[\pi]\otimes_\Z I^s$ are neither free nor projective over the ring
$\Z[\pi\times\pi]$. 

\begin{definition} We shall say that a cohomology class $\alpha\in H^n(\pi\times \pi, A)$ is {\it essential} if there exists a homomorphism of $\Z[\pi\times \pi]$-modules $\mu: I^n \to A$ such that $\mu_\ast(\vv^n)=\alpha$. 
\end{definition}

One wants to have verifiable criteria which guarantee that a given cohomology class $\alpha\in H^n(\pi\times \pi, A)$ is essential. 
Since $\vv$ and all its powers are zero-divisors, it is obvious that any essential class must also be a zero-divisor, i.e. satisfy
$$\alpha\, |\, \Delta_\pi\, =\, 0\in H^n(\pi, \tilde A),$$
see above. For degree one cohomology classes this condition is sufficient, see Lemma \ref{degreeone1}. However, as we shall see,  a degree $n\ge 2$ zero-divisor 
does not need to be essential. 

Clearly, the set of all essential classes in $H^n(\pi\times \pi, A)$ forms a subgroup. 

Moreover, the cup-product of two essential classes $\alpha_i\in H^{n_i}(\pi\times\pi, A_i)$, where $i=1, 2$, is an essential class 
$$\alpha_1\cup \alpha_2\in H^{n_1+n_2}(\pi\times \pi, A_1\otimes_{\Z}A_2).$$
Indeed, suppose $\mu_i: I^{n_i}\to A_i$ are $\Z[\pi\times\pi]$-homomorphisms such that ${\mu_i}_\ast(\vv^{n_i})=\alpha_i$, where $i=1, 2$. 
Then $\mu=\mu_1\otimes\mu_2: I^{n_1}\otimes_\Z I^{n_2} \to A_1\otimes_\Z A_2$ satisfies 
$$\mu_{\ast}(\vv^{n_1+n_2})=\mu_{\ast}(\vv^{n_1}\cup \vv^{n_2}) = {\mu_1}_\ast(\vv^{n_1})\cup {\mu_2}_\ast(\vv^{n_2}) = \alpha_1\cup \alpha_2.$$

\begin{lemma}\label{degreeone1} A degree one cohomology class $\alpha\in H^1(\pi\times \pi, A)$ is essential if and only if it is a zero-divisor.
\end{lemma}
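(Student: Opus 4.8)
The forward implication has essentially been recorded already in the paragraphs preceding the lemma: restriction to the diagonal commutes with coefficient homomorphisms, so if $\alpha=\mu_\ast(\vv)$ then $\alpha|_{\Delta_\pi}=\mu_\ast(\vv|_{\Delta_\pi})=0$ by (\ref{can1}). Hence the real content is the converse, and the plan is to build the required $\Z[\pi\times\pi]$-homomorphism $\mu\colon I\to A$ by hand out of a cocycle representing $\alpha$, working throughout with crossed homomorphisms and the explicit cocycle (\ref{cocyclev}) for $\vv$.

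First I would choose a crossed homomorphism $c\colon\pi\times\pi\to A$ representing $\alpha$, so that $c((g,h)(g',h'))=c(g,h)+(g,h)c(g',h')$. The hypothesis that $\alpha$ is a zero-divisor says precisely that the $1$-cocycle $g\mapsto c(g,g)$ of $\pi$ with coefficients in $\tilde A$ is a coboundary, i.e. $c(g,g)=ga_0g^{-1}-a_0=(g,g)a_0-a_0$ for some fixed $a_0\in A$ and all $g\in\pi$. Subtracting from $c$ the principal crossed homomorphism $(g,h)\mapsto ga_0h^{-1}-a_0$ changes neither the class $\alpha$ nor the crossed-homomorphism property, so I may assume the normalisation $c(g,g)=0$ for all $g\in\pi$.

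Next, using that $I$ is free as an abelian group on $\{g-1\mid g\in\pi\setminus\{e\}\}$, I would define $\mu\colon I\to A$ to be the $\Z$-linear map with $\mu(g-1)=c(g,e)$ (consistent with $\mu(0)=c(e,e)=0$). The heart of the verification is the single identity
\[
c(agb^{-1},e)=c(a,b)+a\,c(g,e)\,b^{-1}\qquad(a,b,g\in\pi),
\]
obtained by writing $(agb^{-1},e)=(a,b)(gb^{-1},b^{-1})$ and $(gb^{-1},b^{-1})=(g,e)(b^{-1},b^{-1})$, expanding the cocycle identity twice, and cancelling the terms $c(b^{-1},b^{-1})=0$ supplied by the normalisation. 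Its special case $g=e$ reads $c(ab^{-1},e)=c(a,b)$. Combining the two gives, on the one hand, $\mu\big((a,b)(g-1)\big)=c(agb^{-1},e)-c(ab^{-1},e)=a\,c(g,e)\,b^{-1}=(a,b)\mu(g-1)$, so $\mu$ is $\Z[\pi\times\pi]$-linear; and, on the other hand, $\mu(gh^{-1}-1)=c(gh^{-1},e)=c(g,h)$. Since $\vv$ is represented by the crossed homomorphism $(g,h)\mapsto gh^{-1}-1$ (see (\ref{cocyclev}) and (\ref{cocyclevv})), post-composing that cocycle with $\mu$ recovers $c$, i.e. $\mu_\ast(\vv)=\alpha$, proving that $\alpha$ is essential.

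The only genuinely delicate point is the $\Z[\pi\times\pi]$-linearity of $\mu$: it requires the precise cocycle representative of $\vv$ together with the normalisation $c|_{\Delta_\pi}=0$ established in the first step, and it is exactly there that the zero-divisor hypothesis gets used. (One could instead argue more structurally: the essential classes in $H^1(\pi\times\pi,A)$ are precisely the image of the connecting map $\Hom_{\zpp}(I,A)\to\Ext^1_{\zpp}(\Z,A)$ of the long exact $\Ext$-sequence of (\ref{can}), hence by exactness the kernel of $\Ext^1_{\zpp}(\Z,A)\to\Ext^1_{\zpp}(\Z[\pi],A)$; identifying $\Z[\pi]$ with the permutation module $\Z[(\pi\times\pi)/\Delta_\pi]$ and applying Shapiro's lemma turns this last map into the restriction $H^1(\pi\times\pi,A)\to H^1(\pi,\tilde A)$, whose kernel is the set of zero-divisors. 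The explicit argument above is preferable here since it also exhibits $\mu$ concretely.)
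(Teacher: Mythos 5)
Your proof is correct, and it takes a genuinely different route from the paper. The paper's argument is homological: it writes down the long exact $\Ext_{\zpp}$-sequence of $0\to I\to\Z[\pi]\to\Z\to 0$, observes that ``essential'' means ``in the image of the connecting map $\Hom_{\zpp}(I,A)\to\Ext^1_{\zpp}(\Z,A)$'', and therefore by exactness ``in the kernel of $\epsilon^*:\Ext^1_{\zpp}(\Z,A)\to\Ext^1_{\zpp}(\Z[\pi],A)$''; it then uses Lemma~\ref{isom4} (the Shapiro-type isomorphism $\Phi$) to identify $\epsilon^*$ with restriction to the diagonal $H^1(\pi\times\pi,A)\to H^1(\pi,\tilde A)$, whose kernel is the set of zero-divisors. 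This is exactly the alternative argument you sketch in your closing parenthesis. What you actually carry out instead is a cocycle-level construction: you normalise a crossed homomorphism $c$ representing $\alpha$ so that $c|_{\Delta_\pi}=0$ (this is where the zero-divisor hypothesis enters), and then read off the desired $\zpp$-map $\mu:I\to A$ directly from $c$ via $\mu(g-1)=c(g,e)$, verifying $\zpp$-linearity by hand using the factorisations $(agb^{-1},e)=(a,b)(g,e)(b^{-1},b^{-1})$ and the normalisation. I checked the key identity $c(agb^{-1},e)=c(a,b)+a\,c(g,e)\,b^{-1}$ and the resulting computation $\mu\bigl((a,b)(g-1)\bigr)=(a,b)\mu(g-1)$; both are correct, and $\mu(gh^{-1}-1)=c(g,h)$ identifies $\mu_*(\vv)$ with $\alpha$ via the explicit cocycle (\ref{cocyclev}). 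Your approach buys concreteness (it exhibits the coefficient homomorphism $\mu$ realising $\alpha$), at the cost of some bookkeeping that the paper avoids; the paper's approach buys generality and fits naturally into the spectral-sequence machinery it sets up in the same section (Lemma~\ref{isom4} is reused for the identification of $E_0^{r,s}$). Both are valid; note only that your explicit construction is essentially an unwinding, in degree one, of the same exact sequence that drives the paper's proof.
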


The proof will be postponed until we have prepared the necessary algebraic techniques. 

\begin{lemma}\label{isom1}
Consider two left $\Z[\pi\times\pi]$-modules $M$ and $N$. 
Let $\tilde M$ and $\tilde N$ denote the left $\Z[\pi]$-module structures on $M$ and $N$ correspondingly via conjugation, 
i.e. $g\cdot m=gmg^{-1}$ and $g\cdot n=gng^{-1}$ for $g\in \pi$ and $m\in \tilde M$, $n\in \tilde N$. 
Let 
\begin{equation}\label{isom2}
\Phi: \Hom_{\zpp}(\zp\otimes_\Z M, N) \to \Hom_\zp(\tilde M, \tilde N)
\end{equation}
be the map which associates with any $\zpp$-homomorphism $f: \zp\otimes_\Z M \to N$ its restriction 
$f\, |\, e\otimes M$
onto $$M= e\otimes_\Z M\subset \zp\otimes_\Z M$$ 
where $e\in \pi$ is the unit element. Then $\Phi$ is an isomorphism.
\end{lemma}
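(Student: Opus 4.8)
The plan is to exhibit an explicit inverse of $\Phi$. Given a $\Z[\pi]$-homomorphism $\varphi\colon \tilde M\to\tilde N$, I would define a $\Z[\pi\times\pi]$-homomorphism $\Psi(\varphi)\colon \zp\otimes_\Z M\to N$ on the free generators of $\zp$ over $\Z$ by the formula $\Psi(\varphi)(g\otimes m)= g\,\varphi(g^{-1}mg)\,g^{-1}$ for $g\in\pi$ and $m\in M$, extended $\Z$-linearly in both arguments. First I would check that this is well-defined and additive; this is immediate since $\zp\otimes_\Z M$ is free as an abelian group on the elements $g\otimes m$ with $g$ ranging over $\pi$ and $m$ over a $\Z$-basis of $M$ (recall $M$ need not be free, but $\zp\otimes_\Z M$ is $\Z$-free because $\zp$ is). The slightly delicate point is verifying $\Z[\pi\times\pi]$-equivariance: one must recall that the action on $\zp\otimes_\Z M$ is the diagonal action $(a,b)\cdot(g\otimes m)=(agb^{-1})\otimes(agb^{-1}\cdot m)$ — wait, more carefully, with the conventions of the paper the diagonal action sends $g\otimes m$ to $(agb^{-1})\otimes (agb^{-1}mb a^{-1})$ using $(a,b)\cdot m = amb^{-1}$; I would write this out and then compute directly that $\Psi(\varphi)\big((a,b)\cdot(g\otimes m)\big)=(a,b)\cdot\Psi(\varphi)(g\otimes m)$, where on $N$ the action is again $(a,b)\cdot n = anb^{-1}$. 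The key algebraic input is precisely that $\varphi$ is equivariant for the \emph{conjugation} action, i.e. $\varphi(h\cdot x)=h\varphi(x)h^{-1}$, which is exactly what is needed to absorb the conjugating factors that appear.

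Next I would verify the two composites. For $\Phi\circ\Psi$: evaluating $\Psi(\varphi)$ on $e\otimes M$ gives $\Psi(\varphi)(e\otimes m)= e\,\varphi(e^{-1}me)\,e^{-1}=\varphi(m)$, so $\Phi(\Psi(\varphi))=\varphi$. For $\Psi\circ\Phi$: given $f\in\Hom_{\zpp}(\zp\otimes_\Z M,N)$ with restriction $\varphi=f|_{e\otimes M}$, I must show $\Psi(\varphi)=f$. Since both are $\Z[\pi\times\pi]$-homomorphisms and $\zp\otimes_\Z M$ is generated as a $\Z[\pi\times\pi]$-module by $e\otimes M$ — indeed $g\otimes m = (g,e)\cdot(e\otimes m)$ — it suffices to check they agree on $e\otimes M$, which is the definition of $\varphi$. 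Hence both composites are identities and $\Phi$ is a bijection; it is obviously additive, so it is an isomorphism of abelian groups.

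The main obstacle, such as it is, is bookkeeping: the paper carries several different actions (the $\Z[\pi\times\pi]$-module structure, the induced bimodule structure $(g,h)\cdot m = gmh^{-1}$, the diagonal action on tensor products, and the conjugation action defining $\tilde M$), and the computation of equivariance of $\Psi(\varphi)$ requires feeding these through the formula without sign or side errors. I would therefore be careful to state explicitly at the outset which action is in play on each module, note that $e\otimes M$ generates $\zp\otimes_\Z M$ over $\Z[\pi\times\pi]$ via the left factor $\pi\times 1$, and then let the conjugation-equivariance of $\varphi$ do the work. No deeper idea is needed; the lemma is a variant of the standard adjunction $\Hom_{\zpp}(\zpp\otimes_{\zp}-,-)\cong\Hom_{\zp}(-,\mathrm{res}\,-)$, the twist being that the restriction here is along the conjugation (diagonal-to-first-factor-after-twist) inclusion rather than a literal subgroup inclusion.
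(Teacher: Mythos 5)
Your overall strategy — exhibit an explicit inverse $\Psi$ and check both composites — is exactly the paper's, but the formula you propose for $\Psi$ is wrong in a way that makes the whole construction collapse. You set $\Psi(\varphi)(g\otimes m) = g\,\varphi(g^{-1}mg)\,g^{-1}$, where $g^{-1}mg$ is the conjugation action of $g^{-1}$ on $m$. But $\varphi$ is equivariant for precisely that conjugation action, so $\varphi(g^{-1}mg) = g^{-1}\varphi(m)g$, and your formula simplifies identically to $\Psi(\varphi)(g\otimes m) = \varphi(m)$, independent of $g$. This is not a $\Z[\pi\times\pi]$-homomorphism: with the correct diagonal action $(a,b)\cdot(g\otimes m) = (agb^{-1})\otimes(amb^{-1})$ (your displayed version $(agb^{-1})\otimes(agb^{-1}mba^{-1})$ is also incorrect), one would need $\varphi(amb^{-1}) = a\varphi(m)b^{-1}$, which holds only when $a=b$. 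The correct inverse is $\Psi(\varphi)(g\otimes m) = g\,\varphi(g^{-1}m) = \varphi(mg^{-1})\,g$, using the \emph{one-sided} action $(g^{-1},e)\cdot m$ inside $\varphi$; since that is not the conjugation action, $\varphi$'s equivariance does not dissolve it, and $\zpp$-equivariance of $\Psi(\varphi)$ is verified by pulling $b$ out of $\varphi(bg^{-1}mb^{-1}) = b\varphi(g^{-1}m)b^{-1}$.

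Relatedly, your identity $g\otimes m = (g,e)\cdot(e\otimes m)$ is off: $(g,e)\cdot(e\otimes m) = g\otimes gm$, and the correct statement is $g\otimes m = (g,e)\cdot(e\otimes g^{-1}m)$. This still shows $e\otimes M$ generates $\zp\otimes_\Z M$ over $\Z[\pi\times\pi]$, which is what you need for $\Psi\circ\Phi = \mathrm{id}$, but the generating argument only works once $\Psi$ is fixed. These are not mere bookkeeping slips: the distinction between one-sided and conjugation actions inside $\varphi$ is the entire content of the lemma.
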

\begin{proof} 
%Clearly $e\otimes M$ is a submodule of $\zp\otimes M$ with respect to the conjugation action and hence the restriction 
%$f\, |\, e\otimes M$ is a $\zp$-homomorphism. 
The inverse map 
\begin{equation}\label{isom3}
\Psi:  \Hom_\zp(\tilde M, \tilde N)\to \Hom_{\zpp}(\zp\otimes_\Z M, N)
\end{equation}
can be defined as follows. Given $\phi\in \Hom_\zp(\tilde M, \tilde N)$ let $\hat\phi: \zp\otimes_\Z M\to N$ be defined by 
$$\hat\phi(g\otimes m) =g\phi(g^{-1}m) =\phi(mg^{-1})g$$  for $g\in \pi$ and $m\in M$. For $a, b\in \pi$ we have 
\begin{eqnarray*}
\hat \phi(agb^{-1}\otimes amb^{-1}) &=& agb^{-1} \phi(bg^{-1}a^{-1}\cdot amb^{-1}) \\
&=& a\hat \phi(g\otimes m)b^{-1}
\end{eqnarray*}
which shows that $\hat \phi$ is a $\zpp$-homomorphism. We set $\Psi(\phi)=\hat \phi$. One checks directly that $\Psi$ and $\Phi$ are mutually inverse. 
\end{proof}

As the next step we prove the following generalisation of the previous lemma.

\begin{lemma}\label{isom4}
For two left $\Z[\pi\times\pi]$-modules $M$ and $N$ and any $i\ge 0$ the 
map 
\begin{equation}\label{isom5}
\Phi: \Ext^i_{\zpp}(\zp\otimes_\Z M, N) \to \Ext^i_\zp(\tilde M, \tilde N)
\end{equation}
is an isomorphism. The map $\Phi$ acts by first restricting the $\zpp$-module structure to the conjugate action of $\zp$ (where $\pi=\Delta_\pi\subset \pi\times \pi$ is the diagonal subgroup) and secondly by taking the restriction on the  $\zp$-submodule $\tilde M=e\otimes_\Z M\subset \zp\otimes_\Z M$. 
\end{lemma}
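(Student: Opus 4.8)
The plan is to deduce Lemma \ref{isom4} from Lemma \ref{isom1} by a standard dimension-shifting argument, using the fact that $\zp \otimes_\Z M$ is already "half-free" in a way that makes the functor $\Hom_{\zpp}(\zp\otimes_\Z M, -)$ behave well. The key structural observation is the one already exploited in the proof of Theorem \ref{thm3}: if $M$ is a $\Z[\pi\times\pi]$-module whose underlying abelian group is free (e.g. $M = I^s$, or more generally after replacing $M$ by a $\Z$-split resolution), then $\zp\otimes_\Z M$ is free as a left $\zp$-module via the diagonal action restricted to $\Delta_\pi$ — indeed, restricted to the diagonal, $\zp\otimes_\Z M$ with the diagonal action is isomorphic to $\zp\otimes_\Z \tilde M$ with $\pi$ acting only on the left factor, via $g\otimes m \mapsto g\otimes g^{-1}m$, and this is $\zp$-free since $\tilde M$ is $\Z$-free.

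First I would handle the case where $N$ is an injective $\Z[\pi\times\pi]$-module (or, dually, set things up with a $\Z$-split injective resolution $N \to J^\bullet$ of $N$ as a $\zpp$-module): in that case both sides of (\ref{isom5}) vanish for $i>0$, because $\Phi$ in degree zero (Lemma \ref{isom1}) is an isomorphism of the relevant $\Hom$-groups, and $\Phi$ is natural in $N$, so it induces an isomorphism of the injective resolutions on the two sides after applying the respective $\Hom$ functors — provided that $\Hom_{\zpp}(\zp\otimes_\Z M, -)$ carries $\zpp$-injectives to something acyclic on the $\zp$ side. Concretely, the cleanest route is: take a $\Z$-split projective (or just $\Z$-split) resolution $P_\bullet \to M$ of $M$ over $\zpp$ with each $P_j$ having free underlying abelian group; then $\zp\otimes_\Z P_\bullet \to \zp\otimes_\Z M$ is a resolution by $\zpp$-modules that are \emph{free over $\zp=\Z[\Delta_\pi]$} by the remark above. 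Apply Lemma \ref{isom1} to each term: $\Hom_{\zpp}(\zp\otimes_\Z P_j, N) \cong \Hom_{\zp}(\widetilde{P_j}, \tilde N)$, naturally in $j$.

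Next I would run the two hyper-derived-functor spectral sequences (or simply compare the total complexes). On one hand, $\Ext^i_{\zpp}(\zp\otimes_\Z M, N)$ is computed from $\Hom_{\zpp}(\zp\otimes_\Z Q_\bullet, N)$ where $Q_\bullet \to M$ is a \emph{projective} $\zpp$-resolution; on the other, $\Ext^i_{\zp}(\tilde M, \tilde N)$ is computed from $\Hom_{\zp}(\widetilde{Q_\bullet}, \tilde N)$ using that $\zp\otimes_\Z Q_\bullet$ provides a $\zp$-projective resolution of $\zp\otimes_\Z M$ — but I want a resolution of $\widetilde M$, not of $\zp\otimes_\Z\widetilde M$. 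The bridge is the $\zp$-module isomorphism $\zp\otimes_\Z \widetilde M \cong \bigoplus$ (copies of $\widetilde M$) only after induction; more precisely $\zp\otimes_\Z \widetilde M$, with diagonal action restricted to $\Delta_\pi$, is the \emph{induced} (= coinduced, since we tensor) module, so $\Hom_{\zp}(\zp\otimes_\Z \widetilde M, \tilde N) = \Hom_\Z(\widetilde M, \tilde N)$ by the tensor-hom adjunction, and $\Ext^\ast_{\zp}(\zp\otimes_\Z\widetilde M,\tilde N)=0$ for $\ast>0$ when $\widetilde M$ is $\Z$-projective. So taking $Q_\bullet$ with $\Z$-free terms, the complex $\Hom_{\zpp}(\zp\otimes_\Z Q_\bullet, N)\cong \Hom_{\zp}(\widetilde{Q_\bullet},\tilde N)$ computes $\Ext^i_{\zpp}(\zp\otimes_\Z M,N)$ on the left, while $\widetilde{Q_\bullet}\to\widetilde M$ is a $\zp$-\emph{projective} resolution (each $\widetilde{Q_j}$ is a summand of some $\zp\otimes_\Z(\text{free }\Z\text{-module})$, hence $\zp$-projective), so the same complex computes $\Ext^i_{\zp}(\widetilde M,\tilde N)$ on the right. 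The identification of the two complexes is exactly $\Phi$ term-by-term, and it is a chain map by naturality of Lemma \ref{isom1} in $M$; passing to cohomology gives the isomorphism (\ref{isom5}), and the description of how $\Phi$ acts (restrict to $\Delta_\pi$, then restrict to $e\otimes M$) is inherited from Lemma \ref{isom1}.

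The main obstacle I anticipate is verifying cleanly that $\widetilde{Q_j}$ is $\zp$-projective when $Q_j$ is $\zpp$-projective — i.e. that restriction-to-$\Delta_\pi$ followed by "conjugation twist" sends $\zpp$-projectives to $\zp$-projectives. This hinges on the identity, for a free $\zpp$-module $\zpp = \Z[\pi\times\pi]$, that $\widetilde{\zpp}$ (conjugation action of $\Delta_\pi$) is a free $\zp$-module; this is the module-theoretic shadow of the fact that $\pi\times\pi$, as a left $\Delta_\pi$-set under conjugation $g\cdot(a,b)=(ga g^{-1}, gbg^{-1})$... — wait, the relevant action is $g\cdot(a,b)=(ga,gb)$ followed by... — one must track the correct action carefully, but the upshot is a free orbit decomposition indexed by a section of $(\pi\times\pi)/\Delta_\pi$, giving $\zp$-freeness. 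Once this is pinned down, the rest is bookkeeping with adjunctions and the comparison theorem for resolutions. An alternative, perhaps smoother, presentation avoids explicit resolutions entirely: show that both $A \mapsto \Ext^\ast_{\zpp}(\zp\otimes_\Z A, N)$ and $A\mapsto \Ext^\ast_{\zp}(\tilde A,\tilde N)$ are ($\delta$-)universal cohomological functors on $\Z$-split short exact sequences, agreeing in degree $0$ by Lemma \ref{isom1}, hence agreeing in all degrees — but this requires checking effaceability on both sides, which brings one back to essentially the same projectivity fact.
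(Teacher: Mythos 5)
Your proposal, once the dust settles, takes the approach dual to the paper's: you resolve $M$ by $\zpp$-projectives $Q_\bullet$, apply Lemma~\ref{isom1} termwise, and argue that the two resulting complexes compute the two $\Ext$'s; the paper instead resolves $N$ by $\zpp$-injectives $J^\bullet$, applies Lemma~\ref{isom1} termwise, and proves the crucial fact that the restriction functor $J\mapsto\tilde J$ carries $\zpp$-injectives to $\zp$-injectives (via the adjunction $\Hom_\zp(X,\tilde J)\cong\Hom_\zpp(\zpp\otimes_\zp X,J)$, using that $\zpp$ is free as a right $\zp$-module under $(g,h)\cdot k=(gk,hk)$). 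Both routes are valid, and both hinge on a single ``preserves (co)acyclics'' fact; the paper's is arguably lighter because it needs only one such fact, while your projective route needs two, and you verify neither in full.

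Here are the two gaps. First, you assert that $\Ext^i_\zpp(\zp\otimes_\Z M,N)$ can be computed from $\Hom_\zpp(\zp\otimes_\Z Q_\bullet,N)$, but for that you need $\zp\otimes_\Z Q_j$ (diagonal action) to be $\zpp$-projective, and this is never stated or checked. It is in fact true: for $Q_j=\zpp$ the $\pi\times\pi$-set $\pi\times(\pi\times\pi)$ with $(g,h)\cdot(a,x,y)=(gah^{-1},gx,hy)$ is free (stabilisers die already from $gx=x,\ hy=y$), so $\zp\otimes_\Z\zpp$ is $\zpp$-free. Note this is \emph{not} an instance of Brown III.5.7 since $\zp$ is not $\zpp$-free; it needs its own (easy) check. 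Second, your opening ``key structural observation'' is wrong as stated: restricted to $\Delta_\pi$, the module $\zp\otimes_\Z M$ carries the conjugation action on the $\zp$ factor, so it decomposes over conjugacy classes and the trivial class $\{e\}$ contributes the summand $\tilde M$, which is not $\zp$-free in general; the displayed rewriting $g\otimes m\mapsto g\otimes g^{-1}m$ does not intertwine the relevant actions. Fortunately your final argument does not actually use this observation; what it does use is that $\widetilde{Q_j}$ is $\zp$-projective, which is the genuinely correct fact you half-pin down at the end (``one must track the correct action carefully''): for $Q_j=\zpp$, the conjugate structure is $g\cdot a=(g,g)a$, i.e.\ left translation by $\Delta_\pi$ on $\pi\times\pi$, which is a free action, so $\widetilde{\zpp}$ is $\zp$-free. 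Once both projectivity facts are in place your resolution argument (or the equivalent effaceable $\delta$-functor argument you sketch at the end) goes through and produces exactly the map described in the statement, by naturality of Lemma~\ref{isom1}.
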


\begin{proof} Consider an injective resolution $0\to N\to J_0\to J_1\to \dots$ of $N$ over $\zpp$; we may use it to compute 
$\Ext^i_{\zpp}(\zp\otimes_\Z M, N)$. 
By Lemma \ref{isom1}, we have an isomorphism
$$\Phi: \Hom_\zpp(\zp\otimes_\Z M, J_i)\to \Hom_\zp(\tilde M, \tilde J_i), \quad i=0, 1, \dots$$
The statement of Lemma \ref{isom4} follows once we show that each module $\tilde J_i$ is injective with respect to the conjugate action of $\zp$. 

Consider an injective $\zpp$-module $J$, two $\zp$-modules $X\subset Y$ and a $\zp$-homo\-morphism $f: X\to \tilde J$ which needs to be extended onto $Y$. Note that $\zpp$ is free when viewed as a right $\zp$-module where the right action is given by $(g, h)\cdot k = (gk, hk)$ for $(g, h)\in \pi\times \pi$ and $k\in \pi$. Hence we obtain the $\zpp$-modules 
$$\zpp\otimes_{\zp} X \stackrel{\subset}\to \zpp\otimes_{\zp} Y$$
and the homomorphism $f: X\to \tilde J$ determines 
$$f': \zpp\otimes_{\zp} X\to  J$$
by the formula: $f'((g, h)\otimes x)= gf(x)h^{-1}$, where $g,h\in \pi$ and $x\in X$. It is obvious that $f'$ is well-defined and is a $\zpp$-homomorphism. 
Since $J$ is $\zpp$-injective, there is a $\zpp$-extension $$f'': \zpp\otimes_{\zp} Y\to J.$$ The restriction of $f''$ onto $Y= (e, e)\otimes Y$ is a $\zp$-homomorphism $Y\to \tilde J$ extending $f$. Hence $\tilde J$ is injective. This completes the proof.
\end{proof}

\begin{proof}[Proof of Lemma \ref{degreeone1}] Consider the short exact sequence of $\pi\times \pi$-modules
$$0\to I\to \Z[\pi]\stackrel{\epsilon}\to \Z\to 0$$
and the associated long exact sequence
$$
%0\to \Hom_{\pi\times \pi}(\Z, A)\to \Hom_{\pi\times \pi}(\Z\pi, A) 
\dots \to \Hom_{\zpp}(I, A) \stackrel{\delta}\to \Ext^1_{\zpp}(\Z, A) 
\stackrel{\epsilon^\ast}\to  
\Ext^1_{\zpp}(\zp, A)\to ...$$
The condition that $\alpha\in H^1(\pi\times \pi, A)=\Ext^1_{\zpp}(\Z, A)$ is essential is equivalent to the requirement that $\alpha$ lies in the image of $\delta$. By exactness, it is equivalent to $\epsilon^\ast(\alpha)=0\in \Ext^1_{\zpp}(\zp, A)$. Consider the commutative diagram
\begin{equation}
\begin{array}{ccc}
\Ext^1_\zpp (\Z, A) & \stackrel{\epsilon^\ast}\to & \Ext^1_{\zpp}(\zp, A)\\ \\
\downarrow = && \simeq \downarrow \Phi\\ \\
H^1(\pi\times\pi, A)& \stackrel{\Delta^\ast}\to & \Ext^1_\zp(\Z, \tilde A) = H^1(\pi, \tilde A).
\end{array}
\end{equation}
Here $\Delta: \pi\to \pi\times \pi$ is the diagonal. The isomorphism $\Phi$ is given by Lemma \ref{isom4}. 
The commutativity of the diagram follows from the explicit description of $\Phi$. 
Thus we see that a cohomology class $\alpha\in H^1(\pi\times \pi, A)$ is essential if and only if $\Delta^\ast(\alpha)=0\in H^1(\pi, \tilde A)$, i.e. if $\alpha$ is a zero-divisor. 
\end{proof}

\begin{corollary}
For any $\pi\times \pi$ module $A$ one has an isomorphism
$$\Gamma: H^i(\pi\times\pi, \Hom_\Z(\zp, A)) \to H^i(\pi, \tilde A).$$
This isomorphism acts as follows: $$v\mapsto \omega_\ast(v\, |\, \pi), \quad v\in H^i(\pi\times\pi, \Hom_\Z(\zp, A))$$ where $\pi\subset \pi\times\pi$ is the diagonal subgroup and $\omega : \Hom_\Z(\zp, A)\to A$ is the homomorphism $\omega(f)=f(e)\in A$. The symbol $e$ denotes the unit element $e\in \pi$. 
\end{corollary}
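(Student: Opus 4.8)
The plan is to obtain $\Gamma$ as the composite of two isomorphisms: the tensor--hom adjunction over $\Z$, followed by the isomorphism of Lemma~\ref{isom4}. First I would record the $\Z[\pi\times\pi]$-equivariant form of the adjunction: for any $\zpp$-module $B$, the classical bijection $g\mapsto\big(b\mapsto(x\mapsto g(b\otimes x))\big)$ is an isomorphism
\[
\Hom_{\zpp}(B\otimes_\Z\zp,\,A)\ \cong\ \Hom_{\zpp}\big(B,\,\Hom_\Z(\zp,A)\big),
\]
where $B\otimes_\Z\zp$ carries the diagonal action and $\Hom_\Z(\zp,A)$ the $\zpp$-structure of \S\ref{secan}; the only thing to verify is that this bijection intertwines the two $\pi\times\pi$-actions, which is a direct computation. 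Since $\zp$ is free over $\Z$, the functor $-\otimes_\Z\zp$ is exact, so its right adjoint $\Hom_\Z(\zp,-)$ carries injective $\zpp$-modules to injective $\zpp$-modules; and $\Hom_\Z(\zp,-)$ is itself exact. Hence, if $0\to A\to J^0\to J^1\to\cdots$ is an injective resolution of $A$ over $\zpp$, then $0\to\Hom_\Z(\zp,A)\to\Hom_\Z(\zp,J^0)\to\cdots$ is an injective resolution of $\Hom_\Z(\zp,A)$. Applying $\Hom_{\zpp}(\Z,-)$ and the adjunction with $B=\Z$ (so $B\otimes_\Z\zp=\zp$) gives
\[
H^i(\pi\times\pi,\Hom_\Z(\zp,A))=\Ext^i_{\zpp}(\Z,\Hom_\Z(\zp,A))\ \cong\ H^i\big(\Hom_{\zpp}(\zp,J^\bullet)\big)=\Ext^i_{\zpp}(\zp,A).
\]
I would then apply Lemma~\ref{isom4} with $M=\Z$ and $N=A$, using $\zp\otimes_\Z\Z=\zp$ and $\tilde\Z=\Z$, to get $\Ext^i_{\zpp}(\zp,A)\cong\Ext^i_{\zp}(\Z,\tilde A)=H^i(\pi,\tilde A)$; composing the two isomorphisms defines $\Gamma$.

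To identify the formula I would chase a representative through this single resolution $J^\bullet$. A class $v$ is represented by a $\zpp$-homomorphism $u\colon\Z\to\Hom_\Z(\zp,J^i)$, which the adjunction sends to the $\zpp$-homomorphism $\check u\colon\zp\to J^i$, $\check u(x)=u(1)(x)$; the prescription of Lemma~\ref{isom4} (restrict to the submodule $\Z\cdot e\subset\zp$, i.e.\ evaluate at $e$) then sends $\check u$ to $\check u(e)=u(1)(e)=\omega(u(1))$ in $(\widetilde{J^{\,i}})^\pi$. On the other hand, $\widetilde{J^\bullet}$ is a $\zp$-injective resolution of $\tilde A$ (by the argument in the proof of Lemma~\ref{isom4}), the maps $\omega\colon\widetilde{\Hom_\Z(\zp,J^\bullet)}\to\widetilde{J^\bullet}$, $f\mapsto f(e)$, form a chain map of $\zp$-injective resolutions extending $\omega\colon\widetilde{\Hom_\Z(\zp,A)}\to\tilde A$ --- here $\omega$ is $\zp$-linear for the conjugation actions since $\omega(g\cdot f)=g\,f(e)\,g^{-1}=g\cdot\omega(f)$ --- and $u(1)$ is precisely a cocycle representing $v|_\pi\in H^i(\pi,\widetilde{\Hom_\Z(\zp,A)})$. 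Therefore $\omega(u(1))$ represents $\omega_\ast(v|_\pi)$, so $\Gamma(v)=\omega_\ast(v|_\pi)$, as claimed.

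I expect the main obstacle to be the bookkeeping of module structures: checking $\pi\times\pi$-equivariance of the adjunction, recording that $\Hom_\Z(\zp,-)$ preserves $\zpp$-injectives (conveniently, because it is right adjoint to an exact functor, and the argument of Lemma~\ref{isom4} likewise shows $\widetilde{\Hom_\Z(\zp,J^i)}$ is $\zp$-injective), and --- most delicately --- being careful to use the \emph{same} injective resolution $J^\bullet$ of $A$ over $\zpp$ for all three computations (of $H^i(\pi\times\pi,\Hom_\Z(\zp,A))$, of Lemma~\ref{isom4}, and of $v\mapsto v|_\pi$), so that the cocycle-level argument genuinely identifies $\Gamma(v)$ with $\omega_\ast(v|_\pi)$ rather than with a merely isomorphic class.
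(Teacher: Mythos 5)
Your proof is correct, but it is the ``injective'' dual of the argument the paper actually uses. The paper takes a free $\zpp$-resolution $P_\ast$ of $\Z$, applies the tensor--hom adjunction and Lemma~\ref{isom1} at the chain level to obtain
$$\Hom_{\zpp}(P_\ast, \Hom_\Z(\zp,A)) \;\simeq\; \Hom_{\zpp}(P_\ast\otimes_\Z\zp, A) \;\simeq\; \Hom_\zp(\tilde P_\ast, \tilde A),$$
and then observes that $\tilde P_\ast$ is automatically a free $\zp$-resolution of $\Z$ (since $\zpp$ is free over the diagonal $\zp$), which computes $H^i(\pi,\tilde A)$ directly; the explicit formula for $\Gamma$ is then immediate from the explicit form of the adjunction and of $\Phi$ in Lemma~\ref{isom1}. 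You instead work with a $\zpp$-injective resolution $J^\bullet$ of $A$, which forces you to record two extra facts: that $\Hom_\Z(\zp,-)$ preserves $\zpp$-injectives (as right adjoint to the exact functor $-\otimes_\Z\zp$), and that restriction along the diagonal preserves injectives (the main point of the proof of Lemma~\ref{isom4}). Both routes are valid and yield the same map; yours is longer precisely because the projective side of the duality is especially clean here --- restricting a free $\zpp$-module to the diagonal $\zp$ is visibly free, whereas the injectivity statements you need are genuine lemmas. On the other hand, your explicit cocycle chase makes the formula $\Gamma(v)=\omega_\ast(v|_\pi)$ completely transparent, whereas the paper leaves that verification implicit. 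One more small economy you could have made: once you know $\widetilde{\Hom_\Z(\zp,J^\bullet)}$ is a $\zp$-injective resolution, you do not really need to invoke Lemma~\ref{isom4} as a black box at all --- the composite of the adjunction with evaluation at $e$ already gives the isomorphism and the formula in one step, exactly mirroring the paper's chain-level identification.
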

\begin{proof}
Consider a free resolution $P_\ast$ of $\Z$ over $\zpp$. Then 
$$\Hom_{\zpp}(P_\ast, \Hom_\Z(\zp, A))\simeq \Hom_{\zpp}(P_\ast\otimes_\Z \zp, A)\simeq \Hom_\zp(\tilde P_\ast, \tilde A)$$
according to Lemma \ref{isom1}. Our statement now follows since $\tilde P_\ast$ is a free resolution of $\Z$ over $\zp$. 
\end{proof}%\marginpar{more information needed, check!!!}

\section{The case of an abelian group}\label{abel}

Throughout this section we shall assume that the group $\pi$ is abelian. 
We shall fully describe the essential cohomology classes in $H^n(\pi\times \pi, A)$. 

First, we note that it makes sense to impose an additional condition on the $\zpp$-module $A$. 

For a $\zpp$-module $B$ let $B'\subset B$ denote the submodule 
$B'=\{b\in B; gb=bg\, \mbox{for any}\, g\in \pi\}$. Any $\zpp$-homomorphism $\mu: A\to B$ restricts to a homomorphism $\mu: A'\to B'$. 

Clearly, $I'=I$ and similarly $(I^n)'=I^n$. Hence any $\zpp$-homomorphism $\mu: I^n\to A$ takes values in the submodule $A'\subset A$. Hence discussing 
essential cohomology classes $\alpha\in H^n(\pi\times \pi, A)$ we may assume that $A'=A$. 

Consider the map 
\begin{equation}\label{phi}
\phi: \pi\times \pi\to \pi, \quad \mbox{where}\quad \phi(x, y) =xy^{-1}.
\end{equation}
It is a group homomorphism (since $\pi$ is abelian). Besides, let $A$ be a $\zpp$-module with $A'=A$. Then there exists a unique $\zp$-module $B$ such that $A=\phi^\ast(B)$. 

\begin{theorem}\label{abel} Assume that the group $\pi$ is abelian. Let $B$ be a $\zp$-module and let $\alpha\in H^n(\pi\times\pi, \phi^\ast(B))$ be a cohomology class. Then $\alpha$ is essential if and only if $\alpha=\phi^\ast(\beta)$ for some $\beta\in H^n(\pi, B)$. 
\end{theorem}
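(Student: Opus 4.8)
The plan is to prove both implications by exploiting the homomorphism $\phi\colon \pi\times\pi\to\pi$, $\phi(x,y)=xy^{-1}$, together with the universality result for the Berstein--Schwarz class (Theorem \ref{thm3}) and Lemma \ref{vvrepresented}.

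\medskip

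\textbf{The easy direction.} Suppose $\alpha=\phi^\ast(\beta)$ for some $\beta\in H^n(\pi,B)$. I would first observe that the canonical class $\vv\in H^1(\pi\times\pi,I)$ is itself a pullback of the Berstein--Schwarz class under $\phi$: indeed, the homomorphism $\phi$ induces a map of short exact sequences from (\ref{can}) viewed over $\zp$ (defining $\vp$) to (\ref{can}) viewed over $\zpp$ (defining $\vv$), because the $\zpp$-action on $\Z[\pi]$ and $I$ factors through $\phi$. More concretely, comparing the crossed-homomorphism cocycle (\ref{cocyclevv}), $f'((g_0,h_0),(g_1,h_1))=g_1h_1^{-1}-g_0h_0^{-1}$, with the analogous cocycle for $\vp$ on the homogeneous resolution of $\pi$, one sees directly that $\vv=\phi^\ast(\vp)$ (here $\phi^\ast$ on coefficients is the identity map $I\to\phi^\ast I = I$). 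Hence $\vv^n=\phi^\ast(\vp^n)$ in $H^n(\pi\times\pi,I^n)$. Now by Theorem \ref{thm3} applied to $\beta\in H^n(\pi,B)$ there is a $\zp$-homomorphism $\mu\colon I^n\to B$ with $\mu_\ast(\vp^n)=\beta$. Viewing $\mu$ as a $\zpp$-homomorphism $I^n\to\phi^\ast(B)$ (legitimate since the $\zpp$-action on $I^n$ factors through $\phi$), naturality of coefficient homomorphisms under $\phi^\ast$ gives $\mu_\ast(\vv^n)=\mu_\ast(\phi^\ast(\vp^n))=\phi^\ast(\mu_\ast(\vp^n))=\phi^\ast(\beta)=\alpha$, so $\alpha$ is essential.

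\medskip

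\textbf{The hard direction.} Conversely, suppose $\alpha\in H^n(\pi\times\pi,\phi^\ast(B))$ is essential, i.e. $\alpha=\mu_\ast(\vv^n)$ for some $\zpp$-homomorphism $\mu\colon I^n\to\phi^\ast(B)$. The point to be exploited is that, since $\pi$ is abelian, the diagonal action and the conjugation action on $I^n$ coincide (both are trivial on $I$), and $\zpp$ acts on $I^n$ and on $\phi^\ast(B)$ through the quotient $\phi\colon\zpp\to\zp$. I would try to show that $\mu_\ast(\vv^n)$ automatically lies in the image of $\phi^\ast\colon H^n(\pi,B)\to H^n(\pi\times\pi,\phi^\ast(B))$. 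The cleanest route uses Lemma \ref{vvrepresented}: $\vv^n$ is represented by the exact sequence (\ref{vn}) of $\zpp$-modules, and pushing it out along $\mu$ gives an exact sequence representing $\alpha$. But every module appearing in (\ref{vn}) after applying $\mu$ — namely $I^n$, the terms $\zp\otimes_\Z I^s$, $\zp$, $\Z$, and $B$ — is of the form $\phi^\ast(-)$ of the corresponding $\zp$-module, \emph{except} possibly the modules $\zp\otimes_\Z I^s$. Here I would use that as a $\zpp$-module $\zp\otimes_\Z I^s\cong\phi^\ast(\zp\otimes_\Z I^s)$ when $\pi$ is abelian, which should follow from the reindexing isomorphism $g\otimes m\mapsto g\otimes g^{-1}m$ (the same trick as in Lemma \ref{isom1}), carrying the diagonal action to the action via $\phi$. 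Granting this, the entire spliced sequence (\ref{vn}), after applying $\mu$, is the $\phi^\ast$ of an exact sequence of $\zp$-modules $0\to B\to Q_{n-1}\to\cdots\to Q_0\to\Z\to 0$, whose associated class is some $\beta\in\Ext^n_\zp(\Z,B)=H^n(\pi,B)$; by naturality of the ``class of an exact sequence'' construction under the restriction/pullback $\phi$, one gets $\alpha=\phi^\ast(\beta)$.

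\medskip

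The main obstacle I anticipate is making the last step rigorous: specifically, checking that the isomorphisms $\zp\otimes_\Z I^s\cong\phi^\ast(\zp\otimes_\Z I^s)$ can be chosen compatibly with the differentials of (\ref{vn}), so that the whole complex — not just each term — is identified with a $\phi$-pullback, and then invoking naturality of the $\Ext$-class construction (as recalled from \cite{HS}, Chapter IV, \S 9 at the start of \S\ref{secan}) with respect to the ring homomorphism $\zpp\to\zp$ and the module $B$. An alternative, perhaps smoother, way to organise the hard direction is to note that $\phi\colon\pi\times\pi\to\pi$ has a section $s\colon\pi\to\pi\times\pi$, $s(g)=(g,e)$ (since $\pi$ is abelian, $\phi\circ s=\mathrm{id}$), so $\phi^\ast$ is a split injection on cohomology; then one only needs to check that $\alpha - \phi^\ast(s^\ast\alpha)$ vanishes, which because $\alpha$ is essential and $\vv=\phi^\ast(\vp)$ reduces to a statement about the kernel of $s^\ast$ composed with $\phi^\ast$ being the identity — but this still requires identifying exactly when essential classes are $\phi^\ast$-pullbacks, so the module-theoretic analysis above seems unavoidable. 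I would present the exact-sequence argument as the primary proof, with the reindexing isomorphism stated as the key technical lemma.
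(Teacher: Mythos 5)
Your easy direction agrees with the paper's, with a minor variation: you prove $\phi^\ast(\vp)=\vv$ by comparing cocycles on the homogeneous resolution, while the paper restricts to the two factors $G_1=\pi\times 1$ and $G_2=\Delta_\pi$ of $\pi\times\pi\simeq G_1\times G_2$ and uses injectivity of $H^1$ of the product on the wedge; both are fine. In the hard direction, however, you take a genuinely longer route than necessary, and the technical obstacle you flag is a phantom. The paper's reverse implication is a one-liner: since $\pi$ is abelian, the $\zpp$-actions on $\phi^\ast(B)$ and on $I^n$ both factor through $\phi$, so a $\zpp$-homomorphism $\mu\colon I^n\to\phi^\ast(B)$ is automatically a $\zp$-homomorphism $I^n\to B$; then the very naturality chain you wrote in the easy direction, $\mu_\ast(\vv^n)=\mu_\ast(\phi^\ast(\vp^n))=\phi^\ast(\mu_\ast(\vp^n))$, read in the reverse order, gives $\alpha=\phi^\ast(\beta)$ with $\beta=\mu_\ast(\vp^n)$. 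You had all the pieces — you make exactly this observation about $\mu$ in the easy direction — but did not notice the argument is symmetric and instead route through Lemma~\ref{vvrepresented}, push-outs of (\ref{vn}), and a claimed need for reindexing isomorphisms $\zp\otimes_\Z I^s\cong\phi^\ast(\zp\otimes_\Z I^s)$. No reindexing is needed: when $\pi$ is abelian, $(g,h)\cdot(a\otimes m)=(gah^{-1})\otimes(gmh^{-1})=\phi(g,h)a\otimes\phi(g,h)m$, so the diagonal $\zpp$-action on $\zp\otimes_\Z I^s$ \emph{is} the $\phi^\ast$-pullback of the diagonal $\zp$-action on the nose (the identification is the identity map), and the compatibility with the differentials of (\ref{vn}) that worried you is automatic. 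The closing "naturality of the $\Ext$-class construction under $\phi$" you invoke is, unwound, precisely the commutative square (\ref{diag1}), which you could have applied directly and thereby avoided the whole exact-sequence excursion.
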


It follows from Theorem \ref{abel} that there are no nonzero essential cohomology classes $\alpha\in H^n(\pi\times \pi, A)$ with $n>\cd(\pi)$. 
Moreover, we see that if $\cd(\pi) <n\le  \cd(\pi\times \pi)$ then
any cohomology class $\alpha\in H^n(\pi\times \pi, A)$ is a zero-divisor which is not essential. 

\begin{proof} Assume that $\alpha\in H^n(\pi\times\pi, \phi^\ast(B))$ is such that $\alpha= \phi^\ast(\beta)$ where $\beta\in H^n(\pi, B)$. 
We want to show that $\alpha$ is essential. By Theorem \ref{thm3} there exists a $\zp$-homomorphism $\mu: I^n\to B$ such that $\mu_\ast(\vp^n)=\beta$ where $\vp\in H^1(\pi, I)$ is the Berstein - Schwarz class. 
Note that $\phi^\ast(I)=I$ and \begin{equation}\label{equal}
\phi^\ast(\vp)=\vv\end{equation} 
where $\vv\in H^1(\pi\times \pi, I)$ is the canonical class. To prove (\ref{equal}) we consider two subgroups 
$G_1=\pi\times 1\subset \pi\times \pi$ and $G_2=\Delta_\pi\subset \pi\times \pi$ and since $\pi\times \pi \simeq G_1\times G_2$ we can view the
Eilenberg-MacLane space 
$K(\pi\times\pi, 1)$ as the product $K(G_1, 1)\times K(G_2, 1)$. 
The restriction of the classes 
$\phi^\ast(\vp)$ and $\vv$ onto $G_1$ coincide (as follows from (\ref{v2b}) and from the definition of $\phi$). 
On the other hand, the restriction of the classes 
$\phi^\ast(\vp)$ and $\vv$ onto $G_2$ are trivial (as follows from (\ref{can1}) and from the definition of $\phi$). 
Now the equality (\ref{equal}) follows from the fact that the inclusion $K(G_1, 1)\vee K(G_2, 1)\to K(G_1, 1)\times K(G_2, 1)$ induces a monomorphism on 1-dimensional cohomology with any coefficients. 

Consider the commutative diagram
\begin{equation}\label{diag1}
\begin{array}{ccc}
H^n(\pi, I^n) & \stackrel{\mu_\ast}\longrightarrow & H^n(\pi, B)\\ \\
\phi^\ast \downarrow &&\downarrow \phi^\ast\\ \\
H^n(\pi\times\pi, I^n) & \stackrel{\mu_\ast}\longrightarrow & H^n(\pi\times \pi, \phi^\ast(B)).
\end{array}
\end{equation}
The upper left group contains the power $\vp^n$ of the Berstein - Schwarz class which is mapped onto $\beta=\mu_\ast(\vp^n)$ and $\phi^\ast(\beta)=\alpha$. Moving in the other direction we find $\alpha=\mu_\ast(\phi^\ast(\vp^n))= \mu_\ast(\vv^n)$, i.e. $\alpha$ is essential. 

To prove the inverse statement, assume that a cohomology class $$\alpha\in H^n(\pi\times \pi, \phi^\ast(B))$$ is essential, i.e. 
$\alpha= \mu_\ast(\vv^n)$
for a $\zpp$-homomorphism $\mu: I^n \to \phi^\ast(B)$. We may also view $\mu$ as a $\zp$-homomorphism $I^n\to B$ which leads to the commutative diagram (\ref{diag1}). Using (\ref{equal}) we find that $\phi^\ast(\mu_\ast(\vp^n)) =\mu_\ast( \phi^\ast(\vp^n))= \mu_\ast(\vv^n)=\alpha.$
Hence we see that $\alpha=\phi^\ast(\beta)$ where $\beta= \mu_\ast(\vp^n)$.
\end{proof}

If we wish to be specific, let $\pi=\Z^N$ and consider the trivial coefficient system $A=\Z$. Then $N$ is the highest dimension in which essential cohomology classes 
$$\alpha\in H^N(\Z^N \times \Z^N;\Z)$$ exist. Due to Theorem \ref{abel}, all $N$-dimensional essential cohomology classes are integral multiples of a single class which we are going to describe. 

Denote by $x_1, \dots, x_N\in H^1(\Z^N,\Z)$ a set of generators. Then each class 
$$\alpha_i = x_i\otimes 1 - 1\otimes x_i\in H^1(\Z^N\times \Z^N; \Z), \quad i=1, \dots, N,$$
is a zero-divisor and hence is essential by Lemma \ref{degreeone1}. Their product  
$$\alpha =\alpha_1\cup \alpha_2\cup\dots\cup \alpha_N \in H^N(\Z^N\times \Z^N;\Z)$$
is essential as a product of essential classes. We may write $\alpha$ as the sum of $2^N$ terms
$$\alpha= (-1)^N \cdot \sum_K\, (-1)^{|K|} \cdot x_K\otimes x_{K^c}$$
where $K\subset \{1, 2, \dots, N\}$ runs over all subsets of the index set and $K^c$ denotes the complement of $K$. 
For  $K=\{i_1, i_2, \dots, i_k\}$ with $i_1<i_2<\dots< i_k$ the symbol $x_K$ stands for the product
$x_{i_1}x_{i_2}\dots x_{i_k}$.

\section{The spectral sequence}

In this and in the subsequent sections we abandon the assumption that $\pi$ is abelian and return to the general case, i.e. we consider an arbitrary discrete group $\pi$. 

Let $A$ be a left $\zpp$-module. 
We shall describe an exact couple and a spectral sequence which will allow us to find a sequence of obstructions for a cohomology class $\alpha\in H^\ast(\pi\times \pi,A)$ to be essential. 

We introduce the following notations: 
$$E_0^{rs} = \Ext_{\zpp}^r({\zp}\otimes_{\Z} I^s, A) \quad \mbox{and}\quad D_0^{rs} = \Ext_\zpp^r(I^s, A).$$
The long exact sequence associated to the short exact sequence (\ref{s}) can be written in the form
$$\dots \to E_0^{rs} \stackrel{k_0}\to D_0^{r, s+1}\stackrel{i_0} \to D_0^{r+1, s} \stackrel{j_0}\to E_0^{r+1, s}\to \dots$$
Here $i_0: D_0^{rs}\to D_0^{r+1, s-1}$ is the connecting homomorphism 
\begin{equation}\label{ss}
\Ext_\zpp^r(I^s, A) \to \Ext_\zpp^{r+1}(I^{s-1}, A)\end{equation}
corresponding to the exact sequence $(\ref{s})$. Note that 
\begin{equation}
D_0^{n, 0} = H^n(\pi\times \pi, A), \quad \mbox{and}\quad D_0^{0, n}= \Hom_\zpp(I^n, A).
\end{equation}

\begin{lemma}\label{lmdde}
The set of essential cohomology classes in $H^n(\pi\times \pi, A)$
coincides with the image of the composition of $n$ maps $i_0$:
\begin{equation}
\Hom_\zpp(I^n, A)= D_0^{0, n} \stackrel{i_0}\to D_0^{1, n-1} \stackrel{i_0}\to \dots \stackrel{i_0}\to D_0^{n, 0}= H^n(\pi\times \pi, A).
\end{equation}\label{dde}
\end{lemma}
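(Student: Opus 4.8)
The plan is to unwind the definition of essential class along the spliced exact sequence (\ref{vn}) and match it with the iterated connecting homomorphism $i_0$. Recall that a class $\alpha\in H^n(\pi\times\pi,A)=\Ext^n_\zpp(\Z,A)$ is essential precisely when $\alpha=\mu_\ast(\vv^n)$ for some $\zpp$-homomorphism $\mu\colon I^n\to A$, and by Lemma \ref{vvrepresented} the class $\vv^n\in\Ext^n_\zpp(\Z,I^n)$ is represented by the exact sequence (\ref{vn}), which is exactly the $n$-fold splice of the short exact sequences (\ref{s}) for $s=0,1,\dots,n-1$.

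First I would recall the standard fact (the formalism of \S\ref{secan}, or \cite{HS} IV.9) that if a class $\xi\in\Ext^n_R(\Z,N)$ is represented by a splice of short exact sequences $\xi=\xi_1\cdot\xi_2\cdots\xi_n$ (Yoneda product), then for any $R$-module $A$ the induced map
\begin{equation*}
\xi^\ast\colon \Hom_R(N,A)=\Ext^0_R(N,A)\longrightarrow \Ext^n_R(\Z,A)
\end{equation*}
is the composite of the connecting homomorphisms $\delta_{\xi_1}\circ\delta_{\xi_2}\circ\cdots\circ\delta_{\xi_n}$ attached to the $\xi_i$. Applying this with $N=I^n$, $\xi=\vv^n$ and $\xi_s$ the short exact sequence (\ref{s}) (for $s$ running from $n-1$ down to $0$), the connecting homomorphism $\delta_{\xi_s}$ is exactly the map $i_0\colon D_0^{r,s+1}\to D_0^{r+1,s}$ of (\ref{ss}) in the relevant bidegree. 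Hence the image of $(\vv^n)^\ast$, i.e. the set of classes of the form $\mu_\ast(\vv^n)$, equals the image of the $n$-fold composite
\begin{equation*}
\Hom_\zpp(I^n,A)=D_0^{0,n}\xrightarrow{\ i_0\ }D_0^{1,n-1}\xrightarrow{\ i_0\ }\cdots\xrightarrow{\ i_0\ }D_0^{n,0}=H^n(\pi\times\pi,A),
\end{equation*}
which is the assertion of the lemma.

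For the main step I would either cite the Yoneda-composition identity directly or prove it by an explicit diagram chase: choose a projective resolution $P_\ast\to\Z$ over $\zpp$, lift the chain map $P_\ast\to(\text{complex of }(\ref{vn}))$ using Lemma \ref{vvrepresented}, and observe that the lift factors stagewise through the modules $\zp\otimes_\Z I^s$, so that chasing a cocycle $\mu\colon I^n\to A$ back to a cocycle on $P_n$ is literally the composite of the $n$ coboundary maps associated with the sequences (\ref{s}). Concretely, the explicit cochain formula for $f_n$ in the proof of Lemma \ref{vvrepresented} shows $\mu\circ f_n$ is the cocycle representing $\mu_\ast(\vv^n)$, and each intermediate $\mu\circ(\text{truncated }f_j)$ is a representative of the corresponding class in $D_0^{n-j,\,j}$; the passage from step $j$ to step $j-1$ is precisely the connecting map $i_0$.

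The main obstacle is bookkeeping rather than conceptual: one must be careful that the connecting homomorphism $i_0$ in the exact couple is indeed the one induced by splicing the sequences (\ref{s}) \emph{in the right order and with the right signs}, since the convention of \S\ref{secan} differs from Bourbaki's by a sign $(-1)^{n(n+1)/2}$; but as signs do not affect whether a class lies in an image, this does not affect the statement. A secondary subtlety is the base cases: for $n=1$ the claim is just that essential $\iff$ in the image of $\delta\colon\Hom_\zpp(I,A)\to\Ext^1_\zpp(\Z,A)$, which is immediate from the definition and was already used in the proof of Lemma \ref{degreeone1}; the general case then follows by the associativity of the Yoneda product, i.e. by induction, splicing off one copy of (\ref{s}) at a time.
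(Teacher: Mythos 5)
Your proposal is correct and follows essentially the same route as the paper: both proofs invoke the formalism of \cite{HS}, Chapter IV, \S 9 for the class represented by a spliced exact sequence, together with Lemma \ref{vvrepresented} which identifies (\ref{vn}) as a representative of $\vv^n$, and conclude that the iterated connecting homomorphism $i_0^n$ realizes $\mu\mapsto\mu_\ast(\vv^n)$. The paper phrases the key step via the pushout description of $i_0^n(f)$ rather than citing the Yoneda-composition identity abstractly, but this is the same standard fact in different clothing.
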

\begin{proof} Applying the technique described in \cite{HS}, chapter IV, \S 9, we obtain that the image of a homomorphism 
$f\in \Hom_\zpp(I^n, A)$ under the composition $i_0^n$ is an element of $\Ext^n_\zpp(\Z, A)$ represented by the exact sequence
$$0\to A\to X_f \to \zp\otimes_\Z I^{n-2} \to \zp\otimes_\Z I^{n-3} \to\dots\to \zp\to \Z\to 0$$
where $X_f$ appears in the push-out diagram 
\begin{equation*}
\begin{array}{ccc} 
I^n & \to & \zp\otimes_\Z I^{n-1}\\
{\, \, }\downarrow f && \downarrow\\
A & \to & X_f.
\end{array}
\end{equation*}
Using Lemma \ref{vvrepresented} we see that the same exact sequence represents the element $f_\ast(\vv^n)$. 
\end{proof}

A different proof of Lemma \ref{lmdde} will be given later in this section. 

The exact sequences (\ref{dde}) can be organised into a bigraded exact couple as follows. Denote
\begin{equation}
E_0= \bigoplus_{r, s\ge 0} E^{rs}_0 = \bigoplus_{r, s\ge 0} \Ext^r_{\pi\times \pi}(\zp\otimes_\Z I^s, A), 
\end{equation}
and 
\begin{equation}
D_0 =  \bigoplus_{r, s\ge 0} D^{rs}_0 =\bigoplus_{r, s\ge 0} \Ext^r_{\pi\times \pi}( I^s, A).
\end{equation}
The exact sequence (\ref{dde}) becomes an exact couple
$$\begin{array}{ccccc}
D_0 && \stackrel{i_0}\longrightarrow & & D_0\\ \\
&k_0 \nwarrow &&\swarrow j_0&\\ \\
&&E_0&&
\end{array}
$$
Here the homomorphism $i_0$ has bidegree $(1, -1)$,  the homomorphism $k_0$ has bidegree (0,1), and  the homomorphism $j_0$ has bidegree $(0,0)$.
%\vskip 2cm
%
%\begin{eqnarray*}%\label{long1}
%%\dots\to \Ext^r_\zpp (I^s, A)
%\dots \to  \Ext^r_\zpp (\zp\otimes I^s, A) \stackrel{k}\to \Ext^r_\zpp (I^{s+1}, A)\stackrel{\delta}\to \Ext^{r+1}_\zpp (I^s, A) \stackrel{\epsilon^\ast}\to  \Ext^{r+1}_\zpp (\zp\otimes I^s, A)\to \dots
%\end{eqnarray*}
%associated to the short exact sequence (\ref{s}). The homomorphism $\epsilon^\ast$ is induced by the augmentation and $k$ is induced by the inclusion $I\to \zp$. One may organise all these exact sequences into a bigraded exact couple 
%$$\begin{array}{ccccc}
%D_0 && \stackrel{i_0}\to & & D_0\\ \\
%&k_0 \nwarrow &&\swarrow j_0&\\ \\
%&&E_0&&
%\end{array}
%$$
%as follows. 
%
%
%
%It is the long exact sequence corresponding to the extension
%\begin{equation}\label{extension}
%0\to I^{s+1} \to \Z\pi\otimes I^s \to I^s\to 0\end{equation}
%which can be written as 
%$$\dots \to D^{rs}_0\stackrel{j_0}\to E^{rs}_0\stackrel{k_0}\to D^{r, s+1}_0\stackrel{i_0}\to D^{r+1,s}\to \dots$$
%Note that the homomorphism $i_0^{rs}: D^{r, s+1} \to D^{r+1, s}$ is the composition product with the class
%$$v_s\in \Ext^1_{\pi\times\pi} (I^s, I^{s+1})$$
%representing the extension (\ref{extension}). We see that $k_0$ has degree $(0,1)$, $j_0$ has degree $(0,0)$ and $i_0$ has degree $(1, -1)$. 
Applying the general formalism of exact couples, we may construct the $p$-th derived couple
$$\begin{array}{ccccc}
D_p && \stackrel{i_p}\longrightarrow & & D_p\\ \\
&k_p \nwarrow &&\swarrow j_p&\\ \\
&&E_p&&
\end{array}
$$
where $p=0, 1, \dots$.
The module $D^{rs}_p$ is defined as 
$$D^{rs}_p=\im[i_{p-1}:D^{r-1,s+1}_{p-1}\to D^{rs}_{p-1}] = \im [i_0\circ \dots\circ i_0: D_0^{r-p, s+p}\to D_0^{r, s}].$$
%$$D_p^{r, s} \, =\,  (i_0)^p(D_0^{r-p, s+p})$$
 and 
 $$E_p^{\ast, \ast}=H(E_{p-1}^{\ast, \ast}, d_{p-1})$$ is the homology of the previous term with respect to the differential 
 $d_{p-1}=j_{p-1}\circ k_{p-1}.$
The degrees are as follows:

\begin{eqnarray*}
\deg j_p&=& (-p, p),\\
\deg i_p &=& (1, -1), \\
\deg k_p &=& (0, 1),\\
\deg d_p &=& (-p, p+1).
\end{eqnarray*}

Using this spectral sequence we can express the set of essential classes as follows:

\begin{corollary}\label{essential}
The group $D_n^{n, 0}\subset H^n(\pi\times\pi, A)=D_0^{n, 0}$ coincides with the set of all essential cohomology classes in $H^n(\pi\times\pi, A)$.
\end{corollary}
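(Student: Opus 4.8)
The plan is to show that the subgroup $D_n^{n,0}\subset D_0^{n,0}=H^n(\pi\times\pi,A)$ produced by the derived exact couple is precisely the image of the $n$-fold iterate of the connecting map $i_0$, and then invoke Lemma \ref{lmdde}. By the very definition of the derived couple given just above, $D_p^{rs}=\im[i_0^{\,p}:D_0^{r-p,s+p}\to D_0^{rs}]$. Setting $r=n$, $s=0$, $p=n$ gives
\begin{equation*}
D_n^{n,0}=\im\left[i_0^{\,n}: D_0^{0,n}\to D_0^{n,0}\right]=\im\left[i_0^{\,n}:\Hom_\zpp(I^n,A)\to H^n(\pi\times\pi,A)\right].
\end{equation*}
Lemma \ref{lmdde} identifies exactly this image with the set of essential cohomology classes, so the corollary follows immediately.

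Thus the only point needing care is to make sure the formula $D_p^{rs}=\im[i_0^{\,p}:D_0^{r-p,s+p}\to D_0^{rs}]$ is legitimate down to $p=n$, i.e. that the indices stay in the range where everything is defined. Here $r-p=0\ge 0$ and $s+p=n\ge 0$, so $D_0^{0,n}=\Hom_\zpp(I^n,A)$ is a genuine term of the couple, and each intermediate $D_0^{n-k,k}=\Ext^{n-k}_\zpp(I^k,A)$ with $0\le k\le n$ is likewise defined; the composite $i_0^{\,n}$ is the composition of the connecting homomorphisms $D_0^{0,n}\to D_0^{1,n-1}\to\cdots\to D_0^{n,0}$ appearing in \eqref{dde}. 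So there is no boundary issue, and the identification of $D_n^{n,0}$ with $\im(i_0^{\,n})$ is purely formal from the construction of the derived couple.

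There is really no serious obstacle: the statement is a direct bookkeeping consequence of (a) the definition of $D_p$ in the derived couple and (b) Lemma \ref{lmdde}. The one thing worth spelling out, for the reader's benefit, is why further differentiation of the couple does not change the relevant group — i.e. that $D_m^{n,0}=D_n^{n,0}$ for all $m\ge n$ because $i_0^{\,m}$ for $m>n$ factors through $D_0^{-\,(m-n),\,m}=0$, so the filtration of $H^n(\pi\times\pi,A)$ by the groups $D_p^{n,0}$ stabilises at $p=n$ at the bottom stage $D_\infty^{n,0}=D_n^{n,0}$. I would include a one-line remark to this effect so that the phrase "essential classes $=D_n^{n,0}$" is unambiguous, but the proof proper is the two-line combination of the derived-couple formula with Lemma \ref{lmdde}.

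\begin{proof}
By construction of the $p$-th derived couple (see above), the module $D_p^{rs}$ equals the image of the iterated map
$$i_0\circ\dots\circ i_0:D_0^{r-p,s+p}\to D_0^{r,s}\qquad(p\text{ factors}).$$
Taking $r=n$, $s=0$ and $p=n$, all intermediate indices $D_0^{n-k,k}=\Ext^{n-k}_\zpp(I^k,A)$ with $0\le k\le n$ are defined, and we obtain
$$D_n^{n,0}=\im\left[\,\Hom_\zpp(I^n,A)=D_0^{0,n}\stackrel{i_0}\to D_0^{1,n-1}\stackrel{i_0}\to\dots\stackrel{i_0}\to D_0^{n,0}=H^n(\pi\times\pi,A)\,\right].$$
By Lemma \ref{lmdde}, this image coincides with the set of essential cohomology classes in $H^n(\pi\times\pi,A)$. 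Finally, for $m>n$ the composite $i_0^{\,m}$ landing in $D_0^{n,0}$ factors through $D_0^{n-m,m}=0$, so $D_m^{n,0}=D_n^{n,0}$; hence $D_n^{n,0}=D_\infty^{n,0}$ and the description is stable under further passage to derived couples.
\end{proof}
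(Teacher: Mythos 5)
Your proof is correct and takes essentially the same approach as the paper, which simply says the corollary is equivalent to Lemma \ref{lmdde}: you unwind the definition $D_p^{rs}=\im[i_0^{\,p}:D_0^{r-p,s+p}\to D_0^{rs}]$ with $(r,s,p)=(n,0,n)$ and apply Lemma \ref{lmdde}. The closing remark on stability for $m>n$ is a harmless and reasonable addition.
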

\begin{proof}
This is equivalent to Lemma \ref{lmdde}. 
\end{proof}

We want to express the homomorphism 
$i_0: D_0^{r, s+1}\to D_0^{r+1, s}$ with $s\ge 0$
through the canonical class $\vv\in H^1(\pi\times \pi, I)$. 
This will be used to give a different proof of Lemma \ref{dde} and will have some other interesting applications. 
According to the definition, $i_0$ is the connecting homomorphism 
$$i_0: \, \Ext_\zpp^r(I^{s+1}, A) \to \Ext_\zpp^{r+1}(I^{s}, A)$$ corresponding to the short exact sequence (\ref{s}).
%$$0\to I^s\to \zp\otimes I^{s-1} \to I^{s-1}\to 0.$$
Note that $$D_0^{r,s} =\Ext^r_\zpp(I^s, A)= H^r(\pi\times\pi, \Hom_\Z(I^s, A)),$$ see 
\cite{Brown}, chapter III, Proposition 2.2. Under this identification $i_0$ turns into the Bockstein homomorphism 
\begin{equation}\label{bockstein}
\beta: H^r(\pi \times \pi;\mathrm{Hom}_{\Z}(I^{s+1},A)) \to H^{r+1}(\pi \times \pi;\Hom_\Z(I^{s},A))\end{equation}
corresponding to the short exact sequence of $\zpp$-modules
\begin{equation}\label{dual}0 \to \mathrm{Hom}_\Z(I^{s},A) \to \mathrm{Hom}_\Z(\zp \otimes_\Z I^{s},A) \to \mathrm{Hom}_\Z(I^{s+1},A) \to 0.
\end{equation}
The sequence (\ref{dual}) is obtained by applying the functor $\Hom_\Z(\, \cdot \, , A)$ to the exact sequence (\ref{s}) (note that (\ref{s}) splits over $\Z$).  

Let 
$$\ev: \, I\otimes_\Z \Hom_\Z(I^{s+1}, A) \to \Hom_\Z(I^{s}, A)$$
denote the homomorphism given by
$$x_0\otimes f \, \mapsto \, \left(x_1\otimes \dots\otimes x_s\mapsto f(x_0\otimes x_1\otimes \dots \otimes x_s)\right)$$
for $f\in \Hom_\Z(I^s, A)$ and $x_i\in I$ for $i=0, 1, \dots, s$.

\begin{proposition}\label{propbockstein}
%Consider the short exact sequence of $\zpp$-modules
%$$0 \to \mathrm{Hom}_\Z(I^{s-1},A) \to \mathrm{Hom}_\Z(\zp \otimes I^{s-1},A) \to \mathrm{Hom}_\Z(I^{s},A) \to 0$$
%which is obtained by applying the functor $\Hom_\Z(\, \cdot \, , A)$ to the exact sequence (\ref{s}) (which splits over $\Z$). 
% Let 
%% $s\in \mathbb{N}$, let $\psi_*: H^*(\pi \times \pi,I \otimes \mathrm{Hom}_{\mathbb{Z}}(I,A)) \to H^*(\pi \times \pi,A)$ denote the map induced by $\psi$ and let
%$$\beta: H^r(\pi \times \pi;\mathrm{Hom}_{\Z}(I^s,A)) \to H^{r+1}(\pi \times \pi;\Hom_\Z(I^{s-1},A))$$
%denote the associated Bockstein homomorphism.
For any cohomology class $u \in H^r(\pi \times \pi,\mathrm{Hom}_\Z(I^{s+1},A))$ one has
\begin{equation}\label{dual1}
\beta(u) = -\ev_*(\mathfrak{v}\cup u), \end{equation}
 where $\vv\in H^1(\pi\times\pi, I)$ denotes the canonical class.
\end{proposition}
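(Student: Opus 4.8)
The plan is to identify $\beta$ with a Bockstein operator that is already understood, namely cup product with $\vv$, and to do this by comparing two coefficient sequences. First I would apply the exact sequence (\ref{timesm}) with $M=\Hom_\Z(I^{s+1},A)$, so that its associated Bockstein
$$H^r(\pi\times\pi,\Hom_\Z(I^{s+1},A))\longrightarrow H^{r+1}(\pi\times\pi,\, I\otimes_\Z\Hom_\Z(I^{s+1},A))$$
is, by formula (\ref{cupm}), exactly the map $u\mapsto\vv\cup u$. It therefore suffices to exhibit a morphism of short exact sequences of $\zpp$-modules from (\ref{timesm}) with this $M$ to the sequence (\ref{dual}) which is the identity on the common quotient term $\Hom_\Z(I^{s+1},A)$ and is $-\ev$ on the sub-term $I\otimes_\Z\Hom_\Z(I^{s+1},A)\to\Hom_\Z(I^s,A)$; the asserted formula then drops out of the naturality of the connecting homomorphism in the long exact cohomology sequence.

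The middle vertical map $\psi\colon\zp\otimes_\Z\Hom_\Z(I^{s+1},A)\to\Hom_\Z(\zp\otimes_\Z I^s,A)$ I would define on the $\Z$-basis $\pi\subset\zp$ of the left tensor factor by
$$\psi(g\otimes\phi)(a\otimes y)=\phi\bigl((a-\epsilon(a)g)\otimes y\bigr),\qquad g\in\pi,\ \phi\in\Hom_\Z(I^{s+1},A),\ a\in\zp,\ y\in I^s,$$
and extend it $\Z$-linearly in the $\zp$-variable; note that $a-\epsilon(a)g\in I$, so the right-hand side is legitimately applied to an element of $I\otimes_\Z I^s=I^{s+1}$. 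Three verifications are needed: that $\psi$ is a $\zpp$-homomorphism; that the square with the quotient maps commutes (this is immediate, since $\epsilon(x)=0$ for $x\in I$ forces $\psi(g\otimes\phi)$ to restrict to $\phi$ on $I^{s+1}$, and $\epsilon\otimes 1$ sends $g\otimes\phi$ to $\phi$); and that the induced map on the left-hand subobjects is $-\ev$. For the last point one computes, for $x_0\in I$, that $\psi(x_0\otimes\phi)(a\otimes y)=-\epsilon(a)\,\phi(x_0\otimes y)$, which is precisely the image of $-\ev(x_0\otimes\phi)\in\Hom_\Z(I^s,A)$ under the inclusion $(\epsilon\otimes 1)^\ast$ of (\ref{dual}); this is where the minus sign in (\ref{dual1}) comes from.

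Granting these, naturality of the Bockstein with respect to this morphism of coefficient sequences gives $\beta(u)=(-\ev)_\ast(\vv\cup u)=-\ev_\ast(\vv\cup u)$ for every $u\in H^r(\pi\times\pi,\Hom_\Z(I^{s+1},A))$, which is the claim.

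The step I expect to be the main obstacle is the verification that $\psi$ is $\zpp$-equivariant: because $\psi$ is prescribed only on group-element generators of $\zp$ rather than intrinsically, one must unwind the diagonal $\pi\times\pi$-actions on $\zp\otimes_\Z\Hom_\Z(I^{s+1},A)$ and on $\Hom_\Z(\zp\otimes_\Z I^s,A)$ and check the identity $\psi\bigl((b,c)(g\otimes\phi)\bigr)=(b,c)\,\psi(g\otimes\phi)$ by a careful but elementary computation; combined with the sign bookkeeping in the identification with $-\ev$, this is the only genuinely delicate part. A purely computational alternative, which bypasses $\psi$, is to represent $u$ by a cocycle on the bar resolution of $\pi\times\pi$, lift it along $(\epsilon\otimes 1)^\ast$ using the canonical $\Z$-splitting of (\ref{dual}), apply the differential to read off a cocycle for $\beta(u)$, and compare it with the cup-product formula computed from the cocycle (\ref{cocyclevv}) for $\vv$; this trades the equivariance check for a somewhat longer explicit calculation with signs.
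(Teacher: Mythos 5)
Your proposal is correct and follows essentially the same route as the paper: the map you call $\psi$ is exactly the paper's map $F$ (the paper defines it on generators $x\otimes f$ with $x\in\pi$ by $F(x\otimes f)(z\otimes y)=f((z-x)\otimes y)$, which upon $\Z$-linear extension gives your formula $\phi\bigl((a-\epsilon(a)g)\otimes y\bigr)$), and the argument via naturality of the Bockstein with the top row identified with $\vv\cup(-)$ is the same. The only cosmetic difference is sign bookkeeping: you absorb the minus into the left vertical map $-\ev$, whereas the paper keeps $\ev$ as the left vertical and instead writes the bottom row's first map as $-\epsilon^*$, concluding via the snake lemma that the resulting Bockstein equals $-\beta$.
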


\begin{proof}
%Let $u \in H^r(\pi \times \pi, \mathrm{Hom}_{\mathbb{Z}}(I^s,A))$, $r \in \mathbb{N}_0$. 
Using \cite{Brown}, chapter V, property (3.3) and Lemma 5 from \cite{CF}, we obtain $ \delta(u)= \mathfrak{v}\cup u,$ where 
$$\delta:H^r(\pi \times \pi;\mathrm{Hom}_\Z(I^{s+1},A)) \to H^{r+1}(\pi \times \pi;I \otimes \mathrm{Hom}_\Z(I^{s},A))$$
is the Bockstein homomorphism associated with the short exact coefficient sequence
$$0 \to I \otimes_\Z \mathrm{Hom}_{\Z}(I^{s+1},A) \to \Z[\pi] \otimes_\Z \mathrm{Hom}_{\Z}(I^{s+1},A)\stackrel{\epsilon\otimes \mathrm{id}}\to \mathrm{Hom}_{\Z}(I^{s+1},A) \to 0 \ . $$
The latter sequence is obtained by tensoring (\ref{can}) with $\Hom_\Z(I^{s+1}, A)$ over $\Z$. 
To prove Proposition \ref{propbockstein} it is enough to show that 
$\beta {=} -\ev_* \circ \delta. $ Having this goal in mind, we
denote by $$F: \Z[\pi] \otimes_\Z \mathrm{Hom}_{\Z}(I^{s+1},A) \to \mathrm{Hom}_{\Z}(\Z[\pi]\otimes_\Z I^{s},A)$$ the homomorphism which extends $\Z$-linearly the following map
$$F(x\otimes f)(z \otimes y) = f((z-x)\otimes y) $$
for $x,z \in \pi, \ y \in I^{s}$ and $f\in \Hom_\Z(I^{s+1}, A).$
We compute: % for all $g,h,x,z \in \pi$, $y \in I^{s-1}$ and $f \in \mathrm{Hom}_{\Z}(I^s,A)$ that
\begin{eqnarray*}	
&{}&F((g,h)\cdot (x \otimes f))(z \otimes y) = F(gxh^{-1}  \otimes (g,h)f)(z \otimes y) \\
&=& ((g,h)f)((z-gxh^{-1})\otimes y) = gf((g^{-1}zh-x)\otimes g^{-1}yh)h^{-1} \\
&=& g F(x\otimes f)(g^{-1}zh \otimes g^{-1}yh)h^{-1} = ((g,h)\cdot F(x\otimes f))(z \otimes y) \ .
\end{eqnarray*}
Hence %, $F((g,h)\cdot x \otimes f) = (g,h) F(x\otimes f)$ for all $g,h,x \in \pi$ and $f \in \mathrm{Hom}_{\Z}(I^{s-1},A)$, such that 
$F$ is a $\Z[\pi \times \pi]$-homomorphism. Next we claim that the following diagram with exact rows commutes:
$$ \begin{array}{ccccccccc}
0 & \to & I \otimes_{\Z} \mathrm{Hom}_{\Z}(I^{s+1},A) & \stackrel{i \otimes \mathrm{id}}{\longrightarrow} & \Z[\pi]\otimes_{\Z} \mathrm{Hom}_{\Z}(I^{s+1},A) &\stackrel{\epsilon\otimes \mathrm{id}}{\longrightarrow} & \mathrm{Hom}_{\Z}(I^{s+1},A) & \to & 0 \\
& & \downarrow \mathbf{ev} & & \downarrow F & & \downarrow \mathrm{id} & & \\
0 & \to &  \mathrm{Hom}_{\Z}(I^s,A) & \stackrel{-\epsilon^*}{\longrightarrow} &  \mathrm{Hom}_{\Z}(\Z[\pi]\otimes_{\Z} I^s,A) &\stackrel{i^*}{\longrightarrow} & \mathrm{Hom}_{\Z}(I^{s+1},A) & \to & 0 
\end{array}
$$
%
%
%\begin{equation*}
%\xymatrix{
%0 \ar[r] & I \otimes \mathrm{Hom}_{\Z}(I^{s+1},A) \ar[r]^{i\otimes \mathrm{id} \quad} \ar[d]^{\ev}& \Z[\pi] \otimes \mathrm{Hom}_{\Z}(I^{s+1},A) \ar[r]^{\quad \varepsilon\otimes \mathrm{id}} \ar[d]^{F} & \mathrm{Hom}_{\Z}(I^{s+1},A) \ar[d]^{\mathrm{id}} \ar[r]& 0 \\
%0 \ar[r] & \mathrm{Hom}_{\Z}(I^{s},A) \ar[r]^{-\varepsilon^* \qquad} & \mathrm{Hom}_{\Z}(\Z[\pi]\otimes I^{s},A) \ar[r]^{\qquad i^*} & \mathrm{Hom}_{\Z}(I^{s+1},A) \ar[r] & 0 \ .
%}
%\end{equation*}
Indeed, we compute for $g,h \in \pi$, $y \in I^{s}$ and $f \in \mathrm{Hom}_\Z(I^{s+1},A)$,
$$(-\epsilon^*\circ \ev)((g-1)\otimes f)(h \otimes y) = -\epsilon(h) f((g-1)\otimes y) = -f((g-1)\otimes y)$$
and 
\begin{eqnarray*}
&{}&(F\circ (i \otimes \mathrm{id})((g-1)\otimes f)(h\otimes y)  \\ &=& (F(g \otimes f))(h \otimes y) - (F(1 \otimes f))(h\otimes y) \\ 
&=&f((h-g)\otimes y)-f((h-1)\otimes y) \\ &=& -f((g-1)\otimes y) \ .
\end{eqnarray*}
Hence, we see that the left square in the above diagram commutes. We further observe that for all $g,h \in \pi$, $y \in I^{s}$ and 
$f \in \mathrm{Hom}_{\Z}(I^{s+1},A)$ one has
\begin{eqnarray*}
&{}&((i^*\circ F)(g \otimes f))((h-1)\otimes y) = F(g \otimes f)(h\otimes y) - F(g \otimes f)(1\otimes y)  \\ &=& f((h-g)\otimes y)-f((1-g)\otimes y) = f((h-1)\otimes y) \\ 
&=& \epsilon(g)f((h-1)\otimes y) = ((\epsilon\otimes \mathrm{id})(g\otimes f))((h-1)\otimes y),
\end{eqnarray*}
and hence the right square of the diagram commutes as well. 

The commutativity of the above diagram implies that the Bockstein homomorphisms satisfy 
$$\ev_*\circ \delta = \beta' \circ {\rm {id}} = \beta' \ , $$
where $\beta'$ denotes the Bockstein homomorphism of the bottom row exact sequence. Since this sequence coincides with the sequence associated with $\beta$ up to a sign change in the first map, one derives from the snake lemma that $\beta' = -\beta$. This completes the proof. 
\end{proof}

\begin{corollary}\label{coress} Let $\alpha\in H^n(\pi\times\pi, A)$ be a cohomology class and let $k=1, 2, \dots, n-1$ be an integer. Then the following conditions are
equivalent: 
\begin{enumerate}
\item $\alpha$ lies in $D_k^{n, 0}$. 

\item $\alpha = \psi_\ast(\vv^k\cup u)$ for a cohomology class $u\in H^{n-k}(\pi\times\pi, \Hom_\Z(I^k, A))$ where 
$$\psi: I^k \otimes \Hom_\Z(I^k, A) \to A$$
is the coefficient pairing  
\begin{equation}\label{reverse}
\psi(x_1\otimes \dots \otimes x_k \otimes f) =   f(x_k\otimes x_{k-1}\otimes \dots \otimes x_1).\end{equation}
\end{enumerate}
\end{corollary}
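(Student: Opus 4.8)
The plan is to unwind the definition of $D_k^{n,0}$ and iterate Proposition \ref{propbockstein}. By construction $D_k^{n,0}=\im\bigl[(i_0)^k:D_0^{n-k,k}\to D_0^{n,0}\bigr]$, and under the identification $D_0^{r,s}=H^r(\pi\times\pi,\Hom_\Z(I^s,A))$ the homomorphism $i_0$ (which lowers the second index by one) is exactly the Bockstein $\beta$ of Proposition \ref{propbockstein}, namely $\beta(u)=-\ev_*(\vv\cup u)$. Thus condition (1) says precisely that $\alpha=(i_0)^k(v)=\beta^k(v)$ for some $v\in H^{n-k}(\pi\times\pi,\Hom_\Z(I^k,A))$, and the whole task reduces to computing $\beta^k$ in closed form. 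It is convenient to write $\ev^{(s)}\colon I\otimes_\Z\Hom_\Z(I^{s+1},A)\to\Hom_\Z(I^s,A)$ for the evaluation map of Proposition \ref{propbockstein}, making the dependence on $s$ explicit.

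First I would prove by induction on $j$ that for $v\in H^r(\pi\times\pi,\Hom_\Z(I^s,A))$ with $1\le j\le s$ one has $\beta^j(v)=(-1)^j\,(\Psi^{s,j})_*(\vv^j\cup v)$, where
\[
\Psi^{s,j}\colon I^{\otimes j}\otimes_\Z\Hom_\Z(I^s,A)\longrightarrow\Hom_\Z(I^{s-j},A),\qquad
\Psi^{s,j}=\ev^{(s-j)}\circ(\mathrm{id}_I\otimes\ev^{(s-j+1)})\circ\cdots\circ(\mathrm{id}_{I^{\otimes(j-1)}}\otimes\ev^{(s-1)}).
\]
The base case $j=1$ is Proposition \ref{propbockstein} itself, since $\Psi^{s,1}=\ev^{(s-1)}$. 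For the inductive step I would write $\beta^{j+1}(v)=\beta(\beta^j(v))=-\ev^{(s-j-1)}_*\bigl(\vv\cup(-1)^j(\Psi^{s,j})_*(\vv^j\cup v)\bigr)$ and then apply (i) naturality of the cup product with respect to the coefficient homomorphism $\Psi^{s,j}$, i.e. $\vv\cup(\Psi^{s,j})_*(w)=(\mathrm{id}_I\otimes\Psi^{s,j})_*(\vv\cup w)$ (no sign appears), and (ii) associativity of the cup product, $\vv\cup(\vv^j\cup v)=\vv^{j+1}\cup v$; this collapses the expression to $(-1)^{j+1}\bigl(\ev^{(s-j-1)}\circ(\mathrm{id}_I\otimes\Psi^{s,j})\bigr)_*(\vv^{j+1}\cup v)$, and unwinding the definitions gives $\ev^{(s-j-1)}\circ(\mathrm{id}_I\otimes\Psi^{s,j})=\Psi^{s,j+1}$, completing the induction.

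Next I would identify $\Psi^{s,j}$ explicitly. A short computation, again by induction on $j$ and unwinding $\ev(x_0\otimes f)=(x_1\otimes\cdots\otimes x_s\mapsto f(x_0\otimes\cdots\otimes x_s))$, yields
\[
\Psi^{s,j}(x_1\otimes\cdots\otimes x_j\otimes f)=\bigl(y_1\otimes\cdots\otimes y_{s-j}\mapsto f(x_j\otimes x_{j-1}\otimes\cdots\otimes x_1\otimes y_1\otimes\cdots\otimes y_{s-j})\bigr),
\]
the point being that each successive evaluation feeds one more variable onto the front of the argument of $f$, in reverse order. Specialising to $s=j=k$ (so $r=n-k$, $I^{s-j}=I^0=\Z$ and $\Hom_\Z(\Z,A)=A$) this becomes $\Psi^{k,k}(x_1\otimes\cdots\otimes x_k\otimes f)=f(x_k\otimes\cdots\otimes x_1)$, which is exactly the pairing $\psi$ of (\ref{reverse}). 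Hence $\beta^k(v)=(-1)^k\psi_*(\vv^k\cup v)$ for every $v\in H^{n-k}(\pi\times\pi,\Hom_\Z(I^k,A))$.

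Putting the pieces together: condition (1) holds iff $\alpha=\beta^k(v)=(-1)^k\psi_*(\vv^k\cup v)$ for some $v\in H^{n-k}(\pi\times\pi,\Hom_\Z(I^k,A))$; since $v\mapsto(-1)^kv$ is a bijection of that group, this is equivalent to $\alpha=\psi_*(\vv^k\cup u)$ for some such $u$, which is condition (2). The only delicate point is the bookkeeping in the inductive step — keeping the evaluation maps at the correct levels, checking that naturality of the cup product contributes no sign, and tracking the order-reversal of the $x_i$ together with the accumulated factor $(-1)^k$ — but all of this is mechanical once Proposition \ref{propbockstein} is in hand, so there is no essential obstacle.
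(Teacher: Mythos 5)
Your proposal is correct and follows essentially the same route as the paper: the paper's proof is a one-line sketch ("the condition $\alpha\in D_k^{n,0}$ means $\alpha=i_0^k(u)$, identify $D_0^{n-k,k}$ with $H^{n-k}(\pi\times\pi,\Hom_\Z(I^k,A))$, and apply Proposition~\ref{propbockstein} iteratively"), and you simply carry out that iteration in full, with the induction $\beta^j(v)=(-1)^j(\Psi^{s,j})_*(\vv^j\cup v)$, the explicit unwinding of the composed evaluation maps into the order-reversed pairing $\psi$, and the absorption of the overall sign $(-1)^k$ into the choice of $u$.
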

\begin{proof} 
The condition $\alpha\in D_k^{n,0}$ means that $\alpha= i_0^k(u)$ for some $u\in D_0^{n-k, k}$. We know that 
$D_0^{n-k, k} = \Ext_\zpp^{n-k}(I^k, A) = H^{n-k}(\pi\times \pi, \Hom_\Z(I^k, A)).$
Our statement follows by applying iteratively Proposition \ref{propbockstein}. 

%Let's introduce the notation 
%$$\vv_s\in \Ext^1_\zpp(I^{s}, I^{s+1}) = H^1(\pi\times \pi, \Hom_\Z(I^{s}, I^{s+1}))$$
%(where $s=0, 1, \dots$) for the class represented by the exact sequence (\ref{s}). 
%Note that $\vv_0$ coincides with the canonical class, $\vv_0=\vv$. 
%The homomorphism $i_0: D_0^{rs}\to D_0^{r+1, s-1}$ can be understood as 
%the map 
%$$H^r(\pi\times\pi, A_s) \to H^{r+1}(\pi\times\pi, A_{s-1})$$ (compare (\ref{ss}))
%acting by the formula 
%\begin{equation}\label{why}u\mapsto \vv_{s-1}\cup u, \end{equation}\marginpar{why?}
%where the cup-product is taken with respect to the pairing 
%$$\Hom_\Z(I^{s-1}, I^s)\otimes \Hom_\Z(I^s, A) \to \Hom_\Z(I^{s-1}, A)$$
%given by the composition of homomorphisms. 
%
%We obtain by induction
%$$\alpha = \vv_0\cup u_, \quad u_1\in H^{n-1}(\pi\times\pi, A_1),$$
%$$u_1 = \vv_1\cup u_2, \quad u_2\in H^{n-2}(\pi\times\pi, A_2), $$
%and so on. We obtain a sequence of classes $u_j\in H^{n-j}(\pi\times\pi, A_j)$ where $j=0, 1, \dots, k$ such that 
%$u_0=\alpha$ and 
%\begin{equation}
%u_j = \vv_j \cup u_{j+1}, \quad j= 0, \dots, k-1.
%\end{equation}
%Thus we obtain 
%\begin{equation}
%\alpha = \left(\vv_0\cup \vv_1\cup \vv_2\cup \dots\cup \vv_{k-1}\right)\cup u_k.
%\end{equation}
%Next we observe that 
%$$\vv_0\cup \vv_1\cup \vv_2\cup \dots\cup \vv_{k-1}$$ coincides with $\vv^k$. Note that the cup-products which are used in the LHS of this formula are distinct from the ones which  are used to form the power $\vv^k$. 
\end{proof}

We may use Proposition \ref{propbockstein} to give another proof of Lemma {\ref{lmdde}}. By Corollary \ref{coress}, classes $\alpha\in D_n^{n,0}$ are characterised by the property $\alpha= \vv^n\cup u$ where the cup product is given with respect to the pairing 
$I^n\otimes \Hom_\Z(I^n, A)\to A$ given by the formula (\ref{reverse}) for some $u\in H^0(\pi\times\pi, \Hom_\Z(I^n, A))=\Hom_\zpp(I^n, A)$. 
Thus $u$ is a $\zpp$-homomorphism $I^n\to A$ and applying the definition of the cup product (see \cite{Brown}, chapter V, \S 3) we see that 
$\alpha\in D_n^{n,0}$ if and only if $\alpha=\phi_\ast(\vv^n)$ for a $\zpp$-homomorphism $\phi: I^n\to A$. 

\begin{corollary}\label{cor65}
If for some $\zpp$-module $A$ and for an integer $k$ the module $D_k^{n, 0}$ is nonzero then $\tc(K(\pi, 1))\ge k+1$. 
\end{corollary}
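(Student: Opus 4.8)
The plan is to relate a nonzero group $D_k^{n,0}$ to the existence of an essential-type cohomology class that forces $\tc$ to be large, using the characterisation from Corollary~\ref{coress} together with the weight estimate implicit in the setup. First I would pick a nonzero element $\alpha\in D_k^{n,0}\subset H^n(\pi\times\pi,A)$. By Corollary~\ref{coress}, such a class has the form $\alpha=\psi_\ast(\vv^k\cup u)$ for some $u\in H^{n-k}(\pi\times\pi,\Hom_\Z(I^k,A))$, where $\psi$ is the reversal pairing~(\ref{reverse}). So $\alpha$ is a ``$k$-fold multiple of $\vv$'' in the sense that it factors through $\vv^k$ up to a coefficient homomorphism.

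The key step is then to invoke the behaviour of $\tc$-weight. Recall from~(\ref{add}) that $\wgt$ is superadditive under cup products, and that the canonical class $\vv$ (being a zero-divisor, see~(\ref{can1})) has $\wgt(\vv)\ge 1$; more precisely each factor of $\vv$ in a product contributes at least $1$ to the weight. Coefficient homomorphisms $\psi_\ast$ do not decrease weight (weight is defined via pullbacks and vanishing, and applying a coefficient map commutes with pullback), and cupping with an arbitrary class $u$ does not decrease weight either. Hence $\wgt(\alpha)\ge k$. Since $\alpha\not=0$, a cohomology class of weight $\ge k$ cannot be pulled back to zero over a space covered by $k$ open sets each admitting a motion-planning section; but $X\times X$ itself is covered by $\tc(X)$ such sets. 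Applying Definition~\ref{defweight} with $f=\mathrm{id}_{X\times X}$ shows that if $\tc(X)\le k$ then $\alpha=\mathrm{id}^\ast(\alpha)=0$, a contradiction. Therefore $\tc(K(\pi,1))=\tc(X)\ge k+1$.

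I would phrase this cleanly by first proving the auxiliary fact $\wgt(\vv^k\cup u)\ge k$ for any $u$, and then $\wgt(\psi_\ast(\vv^k\cup u))\ge k$, and finally observing that a nonzero class of positive weight $m$ forces $\tc\ge m+1$ via Definition~\ref{defweight}. The main obstacle is making precise the claim that coefficient homomorphisms and cupping with arbitrary classes are weight non-decreasing: this requires unwinding Definition~\ref{defweight} and checking that if $f^\ast(\vv^k\cup u')=0$ whenever $Y$ is suitably covered — which holds by~(\ref{add}) applied to the $k$ factors of $\vv$ — then also $f^\ast(\psi_\ast(\vv^k\cup u))=\psi_\ast(f^\ast(\vv^k\cup u))=0$. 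One must be slightly careful that the weight inequality~(\ref{add}) is being applied to the $k$ separate classes $\vv,\dots,\vv$ and the remaining class $u$, giving $\wgt(\vv^k\cup u)\ge k\cdot\wgt(\vv)+\wgt(u)\ge k$. Everything else is bookkeeping: $A$ is an arbitrary $\zpp$-module viewed as a local system on $X\times X$, and the identification $D_0^{n,0}=H^n(\pi\times\pi,A)$ together with asphericity of $X$ ($X\simeq K(\pi,1)$, so $\tc(X)=\tc(K(\pi,1))$) closes the argument.
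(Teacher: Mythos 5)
Your proof is correct and takes essentially the same route as the paper: both start by applying Corollary~\ref{coress} to write a nonzero $\alpha\in D_k^{n,0}$ as $\psi_\ast(\vv^k\cup u)$, and both then conclude $\tc\ge k+1$ from the fact that $\vv$ is a zero-divisor. The paper is a bit more compact at the last step — it simply observes that $\alpha\ne 0$ forces $\vv^k\ne 0$ and invokes the zero-divisor cup-length lower bound (\cite{Finv}, Corollary 4.40), whereas you re-derive that bound via the $\tc$-weight machinery ($\wgt(\vv)\ge 1$, superadditivity~(\ref{add}), and weight being non-decreasing under coefficient maps); both closings are sound.
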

\begin{proof}
Using Corollary \ref{coress} we see that $D_k^{n, 0}\not=0$ then for $\alpha\in D_k^{n, 0}$, $\alpha\not =0$, we have $\alpha = \psi_\ast(\vv^k\cup u)$ and hence $\vv^k\not=0$. Since $\vv$ is a zero-divisor, our statement follows from \cite{Finv}, Corollary 4.40. 
\end{proof}
Using the spectral sequence we may describe a complete set of $n$ obstructions for a cohomology class $\alpha\in H^n(\pi\times\pi, A)=D_0^{n, 0}$ to be essential. 
We shall apply Lemma \ref{lmdde} and act inductively. The class $\alpha$ is essential if it lies in the image of the composition of $n$ maps $i_0$. For this to happen we first need to guarantee that $\alpha$ lies in the image of the last map $i_0: D_0^{n-1, 1}\to D_0^{n, 0}$. Because of the exact sequence
$$\dots \to D_0^{n-1, 1}\, \stackrel{i_0}\to \, D_0^{n, 0}\, \stackrel{j_0}\to \, E_0^{n, 0} \, \stackrel{k_0}\to \, \dots$$
we see that $\alpha$ lies in the image of $i_0$ if and only if 
\begin{equation}\label{j0}
j_0(\alpha)=0\in E_0^{n, 0}.
\end{equation} 
We have the commutative diagram
$$
\begin{array}{ccc}
\Ext_\zpp^n(\Z, A) & \stackrel{ j_0}\to & \Ext_\zpp^n(\zp, A)\\ \\
\downarrow = && \Phi\downarrow \simeq\\ \\
H^n(\pi\times \pi, A) & \stackrel{r^\ast}\to & H^n(\pi, \tilde A).
\end{array}
$$
Here $\Phi$ is the isomorphism of Lemma \ref{isom4} and $j_0=\epsilon^\ast$ is the homomorphism induced by the augmentation; the homomorphism 
$r^\ast$ is induced by the inclusion $r: \pi\to \pi\times \pi$ of the diagonal subgroup. We see that the class $\alpha$ lies in the image of $i_0$ if and only if it is a zero divisor, i.e. $r^\ast(\alpha) =0$. 

To describe the second obstruction let us assume that $\alpha\in H^n(\pi\times\pi, A)$ is a zero-divisor, i.e. (\ref{j0}) is satisfied. Then 
$\alpha \in D_1^{n,0}$.  One has $\alpha \in D_2^{n,0}$ if and only if 
\begin{equation}\label{j1}
j_1(\alpha)=0 \in E_1^{n-1, 1}.
\end{equation} 
This follows from the exact sequence
$$\dots \to D_1^{n-1, 1}\, \stackrel{i_1}\to \, D_1^{n, 0}\, \stackrel{j_1}\to \, E_1^{n-1, 1} \, \stackrel{k_1}\to D_1^{n-1, 2}\to \, \dots$$
where $i_1$ is the restriction of $i_0$ onto $D_1\subset D_0$. 

Continuing these arguments and using the exact sequences
$$\dots \to D_p^{n-1, 1}\, \stackrel{i_p}\to \, D_p^{n, 0}\, \stackrel{j_p}\to \, E_p^{n-p, p} \, \stackrel{k_p}\to D_p^{n-p, p+1}\to \, \dots$$
we arrive at the following conclusion:
\begin{corollary}\label{cor66} Let $k$ and $n$ be integers with $0<k\le n$. 
\begin{enumerate}
\item[(1)] A cohomology class $\alpha\in H^n(\pi\times\pi, A)$ lies in the group \newline $D_k^{n, 0} = \im [i_0^k: D_0^{n-k, k}\to D_0^{n, 0}]$ if and only if the following 
$k$ obstructions 
\begin{eqnarray}\label{jk}
j_s(\alpha)\, \in \, E_s^{n-s, s}, \quad \mbox{where}\quad s = 0, 1, \dots, k-1,
\end{eqnarray}
vanish. 

\item[(2)]  The condition $j_0(\alpha)=0$ is equivalent for $\alpha$ to be a zero-divisor. 

\item[(3)]  Each obstruction $j_s(\alpha)$ is defined once the previous
obstruction $j_{s-1}(\alpha)$ vanishes. 

\item[(4)]  The triviality of all obstructions 
$j_0(\alpha), j_1(\alpha), \dots, j_{n-1}(\alpha)$
is necessary and sufficient for the cohomology class $\alpha$ to be essential. 
\end{enumerate}
\end{corollary}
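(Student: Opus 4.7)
The plan is to read the four parts off the derived exact couple by a short induction, with the main input being the already–established identification $D_n^{n,0} = \{\text{essential classes}\}$ from Corollary \ref{essential} and the commutative square involving $\Phi$ that reinterprets $j_0$ as the diagonal restriction.

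First I will prove part (1) by induction on $k$. The base case $k=0$ is the tautology $D_0^{n,0}=H^n(\pi\times\pi,A)$, where there are no obstructions to impose. For the inductive step, I extract the long exact sequence of the $(k-1)$-st derived couple at bidegree $(n,0)$:
\begin{equation*}
\dots \to D_{k-1}^{n-1,1}\, \stackrel{i_{k-1}}\to \, D_{k-1}^{n,0}\, \stackrel{j_{k-1}}\to \, E_{k-1}^{n-(k-1),\, k-1}\to\dots
\end{equation*}
By definition $D_k^{n,0}=\im[i_{k-1}:D_{k-1}^{n-1,1}\to D_{k-1}^{n,0}]$, and by exactness this image coincides with $\ker(j_{k-1})\cap D_{k-1}^{n,0}$. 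So a class $\alpha\in D_{k-1}^{n,0}$ lies in $D_k^{n,0}$ precisely when $j_{k-1}(\alpha)=0\in E_{k-1}^{n-(k-1),k-1}$. Combining this with the inductive hypothesis applied at level $k-1$ yields the vanishing of $j_0(\alpha),\dots,j_{k-1}(\alpha)$ as an equivalent condition for $\alpha\in D_k^{n,0}$. This also proves part (3), since the class $j_s(\alpha)$ is a priori only defined on $D_s^{n,0}$, which by part (1) is exactly the locus where the earlier obstructions vanish.

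For part (2), I reuse the commutative diagram
\begin{equation*}
\begin{array}{ccc}
\Ext^n_{\zpp}(\Z,A) & \stackrel{j_0=\epsilon^\ast}\longrightarrow & \Ext^n_{\zpp}(\zp,A) \\
\parallel && \simeq\, \downarrow \Phi \\
H^n(\pi\times\pi,A) & \stackrel{r^\ast}\longrightarrow & H^n(\pi,\tilde A),
\end{array}
\end{equation*}
where $\Phi$ is the isomorphism of Lemma \ref{isom4} and $r:\pi=\Delta_\pi\hookrightarrow\pi\times\pi$ is the diagonal inclusion. This square was already verified in the text in the course of proving Lemma \ref{degreeone1}; it shows $j_0(\alpha)=0$ if and only if the diagonal restriction $\alpha|\Delta_\pi$ is zero, i.e.\ $\alpha$ is a zero-divisor.

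Finally, part (4) is just part (1) specialised to $k=n$ together with Corollary \ref{essential}: the latter identifies the set of essential classes in $H^n(\pi\times\pi,A)$ with $D_n^{n,0}$, and part (1) characterises $D_n^{n,0}$ as the common vanishing locus of $j_0(\alpha),\dots,j_{n-1}(\alpha)$. I do not expect any serious obstacle here; the entire argument is a bookkeeping exercise in the exact-couple formalism once Corollary \ref{essential} and the $\Phi$-diagram for $j_0$ are in hand. The only point that deserves a sentence of care is making sure the indexing in $\deg j_p = (-p,p)$ matches the claimed target $E_s^{n-s,s}$, which it does.
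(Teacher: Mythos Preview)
Your proof is correct and follows essentially the same route as the paper: the paper's argument is the discussion immediately preceding the corollary, which runs the same induction through the exact sequences $D_p^{n-1,1}\stackrel{i_p}\to D_p^{n,0}\stackrel{j_p}\to E_p^{n-p,p}$ of the derived couples and invokes the same $\Phi$-diagram for part~(2). One minor citation point: the degree-$n$ version of that commutative square is actually written out in the text just before the corollary rather than in the proof of Lemma~\ref{degreeone1} (which only records the degree-$1$ case), though of course the verification is identical.
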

Figure \ref{eterm} shows the locations of the obstructions $j_k(\alpha)\in E_k^{n-k, k}$. 
\begin{figure}[h]
\begin{center}
%\resizebox{9cm}{7cm}
{\includegraphics[width=0.45 \textwidth]{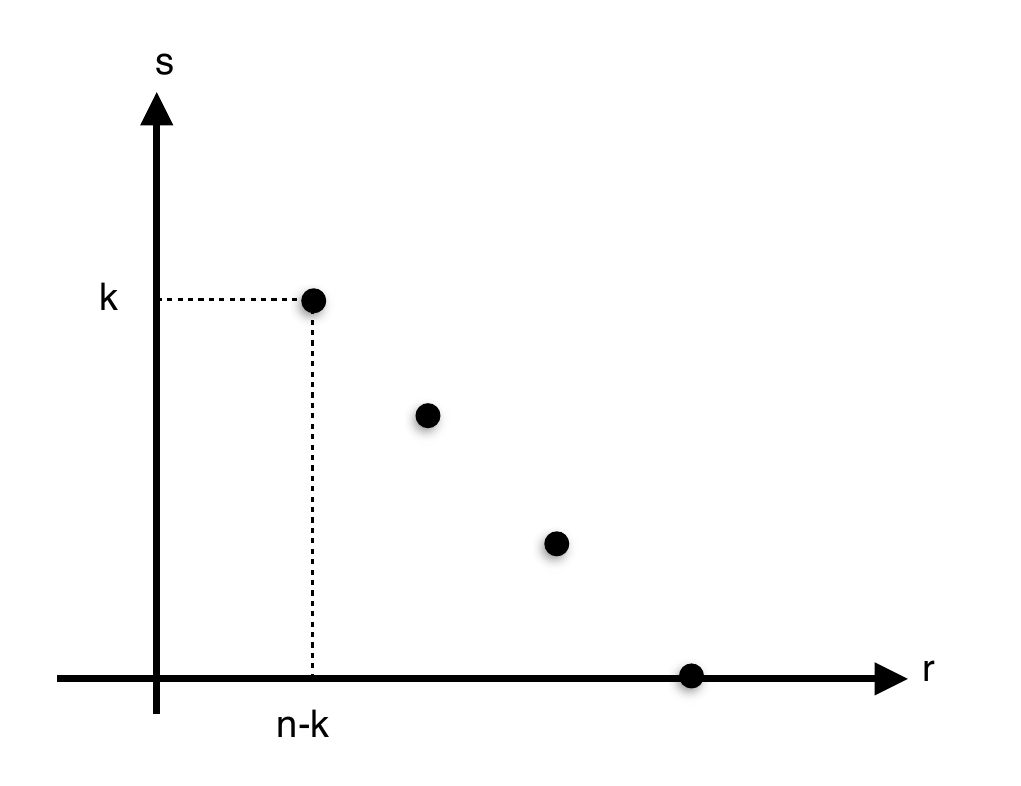}}
%[167,380][503,761]
\end{center}
\caption{The groups in the $E$-term of the spectral sequence containing 
the obstructions $j_k(\alpha)$.}\label{eterm}
\end{figure}

%Next we state another consequence of the above discussion which will be helpful later. 
%\begin{figure}
%\begin{center}
%%\resizebox{4cm}{5cm}\begin{figure}[htbp]
%\begin{center}
%
%\caption{default}
%\label{default}
%\end{center}
%\end{figure}
%
%\includegraphics[width=5cm]
%{eterm.pdf}
%\end{center}
%%\caption{A partial order between the points of the tree.}
%\end{figure}
%
%\marginpar{add a picture of places $(n, 0), \dots, (1, n-1)$ where the obstructions are located}
%\begin{figure}[h]
%\begin{center}
%\includegraphics[width=\linewidth]{eterm.jpg}
%\caption{default}
%\label{default}
%\end{center}
%\end{figure}
%
\section{Computing the term $E_0^{r, s}$ for $s\ge 1$}

In this section we compute the initial term $E_0^{r, s}$ of the spectral sequence. 
Using Lemma \ref{isom5} we find
\begin{eqnarray}
E_0^{r, s} &=& \Ext_\zpp^r(\zp\otimes_\Z I^s, A) \nonumber \\
&\simeq & \Ext_\zp^r(\tilde I^s, \tilde A)\label{step1}
\end{eqnarray}
where in the second line the tilde $\quad \tilde{}\quad$ above the corresponding modules means that we consider these modules with respect to the conjugation action,
i.e. $g\cdot a=gag^{-1}$ for $a\in A$ and $g\in \pi$. We exploit below the simple structure of the module $\tilde I^s$ where $s\ge 1$ and compute explicitly the $E_0$-term. 

For $s\ge 1$ consider the action of $\pi$ on the Cartesian power $\pi^s=\pi\times\pi\times \dots\times \pi$ ($s$ times) via conjugation, i.e. 
$g\cdot(g_1, \dots, g_s) = (gg_1g^{-1}, \dots, gg_sg^{-1})$ for $g, g_1, \dots g_s\in \pi$. 
The orbits of this action are joint conjugacy classes of $s$-tuples of elements of $\pi$. 
We denote by $\C_{\pi^s}$ the set of orbits and let $\C'_{\pi^s}\subset \C_{\pi^s}$ denote the set of orbits of nontrivial elements, i.e. such that  
$g_i\not=1$ for all $i=1, \dots, s$. 

Let $C\in \cps$ be an orbit. The isotropy subgroup $N_C\subset \pi$ of an $s$-tuple $(g_1, \dots, g_s)\in C\subset \pi^s$ is the intersection of the centralisers of the elements $g_1, \dots, g_s$. The subgroup $N_C$, viewed up to conjugation, depends only on the orbit $C$. 

\begin{theorem} \label{thm5} For any left $\zpp$-module $A$ and for integers $r\ge 0$ and $s\ge 1$ one has
\begin{equation}\label{prod3}
E_0^{r, s} \simeq \prod_{C\in \cps} H^r(N_C, A\, |\, {N_C}).
\end{equation}
Here $A\, |\, {N_C}$ denotes $A$ viewed as $\Z[N_C]$-module with $N_C\subset \pi=\Delta_\pi\subset \pi\times\pi$. 
\end{theorem}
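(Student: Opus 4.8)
The plan is to exploit the isomorphism $E_0^{r,s}\simeq \Ext^r_{\zp}(\tilde I^s,\tilde A)$ from (\ref{step1}) and then to analyse the structure of $\tilde I^s$ as a $\Z[\pi]$-module under the conjugation action. Recall that $I$ has a free $\Z$-basis $\{g-1 : g\in\pi,\ g\neq 1\}$, so $I^s = I\otimes_\Z\dots\otimes_\Z I$ has $\Z$-basis the elements $(g_1-1)\otimes\dots\otimes(g_s-1)$ with all $g_i\neq 1$. First I would observe that the conjugation action of $\pi$ on $I$ sends $g-1$ to $hgh^{-1}-1$, hence the conjugation action on this basis of $\tilde I^s$ is precisely the diagonal conjugation action on $s$-tuples of nontrivial elements. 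Therefore $\tilde I^s$ is, as a $\Z[\pi]$-module, the free abelian group on the set of nontrivial $s$-tuples $\pi^s$ with $\pi$ permuting the basis by conjugation; in other words $\tilde I^s \simeq \Z[\pi^s\setminus\text{(tuples with some }g_i=1)]$ as a permutation module, and this permutation set decomposes into orbits indexed by $\C'_{\pi^s}$.

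The second step is to decompose this permutation module over orbits. Writing the basis set as the disjoint union $\coprod_{C\in\cps} C$ of $\pi$-orbits, we get $\tilde I^s \simeq \bigoplus_{C\in\cps} \Z[C]$ as $\Z[\pi]$-modules. For each orbit $C$, fixing a representative $s$-tuple with isotropy subgroup $N_C$, we have $\Z[C]\simeq \Z[\pi/N_C] = \Z[\pi]\otimes_{\Z[N_C]}\Z$, the induced module from the trivial $\Z[N_C]$-module. Then I would apply Shapiro's lemma (Eckmann–Shapiro), which gives $\Ext^r_{\zp}(\Z[\pi]\otimes_{\Z[N_C]}\Z,\ \tilde A)\simeq \Ext^r_{\Z[N_C]}(\Z,\ \tilde A|_{N_C}) = H^r(N_C,\ \tilde A|_{N_C})$. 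Since $\tilde A|_{N_C}$ as a $\Z[N_C]$-module is exactly $A$ viewed via $N_C\subset\Delta_\pi\subset\pi\times\pi$ (the conjugation restriction of the $\pi\times\pi$-structure restricted to the diagonal agrees with the claimed $A|_{N_C}$), this matches the right-hand side of (\ref{prod3}). Finally, $\Ext^r$ out of a direct sum turns into a product, yielding $\Ext^r_{\zp}(\tilde I^s,\tilde A)\simeq \prod_{C\in\cps} H^r(N_C, A|_{N_C})$, which is the assertion. One should note that the decomposition $\tilde I^s\simeq\bigoplus_C\Z[C]$ is a direct sum over a possibly infinite index set, so it is the product (not the sum) that appears after applying the contravariant $\Ext$ functor — this is consistent with the statement.

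The main obstacle I anticipate is the bookkeeping around well-definedness and naturality: one must check that the isomorphism $\Z[C]\simeq\Z[\pi/N_C]$ is independent (up to the canonical identifications) of the chosen representative tuple, which is why the theorem statement says $N_C$ is well-defined only up to conjugation, and conjugate subgroups give canonically isomorphic cohomology with the restricted coefficients. A second, more technical point is verifying that Shapiro's lemma applies in the form needed here: one wants an injective resolution of $\tilde A$ over $\zp$ and the fact that $\Hom_{\zp}(\Z[\pi]\otimes_{\Z[N_C]}\Z, J)\simeq \Hom_{\Z[N_C]}(\Z, J|_{N_C})$ functorially in $J$, together with the fact that restriction of an injective $\zp$-module to the subgroup $N_C$ need not be injective in general — but since $\Z[\pi]$ is free as a $\Z[N_C]$-module (as $N_C$ is a subgroup of the torsion-free group $\pi$), induction is exact and preserves projectives, so the adjoint (coinduction/restriction) argument goes through; alternatively one computes both sides using a free $\zp$-resolution of $\Z$ (which restricts to a free $\Z[N_C]$-resolution) and checks the comparison directly. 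I expect the rest to be routine once these identifications are pinned down.
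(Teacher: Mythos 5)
Your proof is correct and follows essentially the same route as the paper's: pass to $\Ext^r_{\zp}(\tilde I^s,\tilde A)$ via Lemma \ref{isom4}, decompose $\tilde I^s \simeq \bigoplus_{C\in\cps}\Z[C]$ along conjugation orbits, and reduce each summand to $H^r(N_C, A|_{N_C})$. The only cosmetic difference is that you invoke Shapiro's lemma by name for the step $\Ext^r_{\zp}(\Z[C],\tilde A)\simeq H^r(N_C,A|_{N_C})$, whereas the paper in effect re-proves this instance of it by tensoring a free $\zp$-resolution $P_\ast$ of $\Z$ with $\Z[C]$ and observing that $\Hom_\zp(\Z[C]\otimes_\Z P_\ast, B)\simeq \Hom_{\Z[N_C]}(P_\ast, B|_{N_C})$ with $P_\ast$ still free over $\Z[N_C]$; this is precisely the ``alternative'' computation you mention at the end, so the content is identical.
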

\begin{proof} For any $C\in \cps$ consider the set $J_C\subset \tilde{I^s}$ generated over $\Z$ by the tensors of the form
\begin{equation}\label{tensor}
(g_1-1)\otimes\dots\otimes (g_s-1)\end{equation} 
for all $(g_1, \dots, g_s)\in C$. It is clear that $J_C$ is a $\zp$-submodule of $\tilde{I^s}$ (since we consider the conjugation action). 
Moreover, we observe that 
\begin{equation}\label{sum3}
\tilde{I^s} = \bigoplus_{C\in \cps} J_C. 
\end{equation}
Indeed, the elements $g-1$ with various $g\in\pi^\ast$ (where we denote $\pi^\ast= \pi-\{1\}$) form a free $\Z$-basis of $I$; therefore elements of the form 
$(g_1-1)\otimes\dots\otimes(g_s-1)$ with all possible $g_1, \dots, g_s\in \pi^\ast$ form a free $\Z$-basis of $I^s$. 
The formula (\ref{sum3}) is now obvious.

For $C\in \cps$ let $\Z[C]$ denote the free abelian group generated by $C$. Since $\pi$ acts on $C$, the group $\Z[C]$ is naturally a left $\zp$-module which is isomorphic to $J_C$ via the isomorphism
$$(g_1, \dots, g_s)\, \mapsto\, (g_1-1)\otimes\dots\otimes (g_s-1),$$
where $(g_1, \dots, g_s)\in C.$
For a left $\zp$-module $B$ we have
\begin{eqnarray*}\Hom_\zp(J_C, B) &=& \Hom_\zp(\Z[C], B|_{N_C})\\ &=& \Hom_{N_C}(\Z, B)\\ &=& H^0(N_C, B|_{N_C}).\end{eqnarray*}
Here we used the fact that the action of $\pi$ on $C$ is transitive and hence a $\zp$-homomorphism $f: \Z[C]\to B$ is uniquely determined by one of its values $f(c)$ where $c\in C$. 

Consider a free resolution
$$P_\ast: \quad \dots \to P_n\to P_{n-1}\to \dots\to P_0\to \Z\to 0$$
 of $\Z$ over $\Z[\pi]$.
Since $\Z[C]$ is free as an abelian group we have the exact sequence 
\begin{equation}\label{product2}
\dots \to \zc\otimes_\Z P_n \to \zc\otimes_\Z P_{n-1} \to \dots\to \zc\otimes_\Z P_0 \to \zc\to 0\end{equation}
of $\zp$-modules. 
It is easy to see that each module $\zc\otimes_\Z P_n $ (equipped with the diagonal action) is free as a $\Z[\pi]$-module 
(see \cite{Brown}, chapter III, Corollary 5.7). 
%Indeed, it is enough to show that $\Z[\pi]\otimes \Z[C]$ is free; here $\pi$ acts from the left on $\pi$ and by conjugation on $C\in \C_\pi$. 
%Now we observe that $\pi$ acts freely on the product $\pi\times C$ which implies our statement. 
Thus we see that (\ref{product2}) is a free $\zp$-resolution of $\zc$ and we may use it to compute $\Ext_\zp^r(\zc, B)$. 
We have %(using the case $r=0$ discussed above)
\begin{eqnarray*}
\Hom_\zp(\zc\otimes_\Z P_n,  B) =  \Hom_\zp(\zc, \Hom_\Z(P_n, B)) = 
%%&& \prod_{C\in \C_\pi} \Hom_Z(P_n, A)^{N_C} =   \prod_{C\in \C_\pi} 
\Hom_{\Z[N_C]}(P_n, B|{N_C}).
\end{eqnarray*}
Thus we see that the complex 
$\Hom_\zp(\zc\otimes P_\ast, B)$ which computes  $\Ext^r_\zp(J_C, B)$, 
coincides with the complex \begin{eqnarray}\label{coh3}\Hom_{\Z[N_C]}( P_\ast |_{N_C}, B | {N_C}).\end{eqnarray}
Since $P_n$ is free as a $\Z [N_C]$-module, we see that the cohomology of the complex (\ref{coh3}) equals 
$ H^r(N_C, B\, |\, {N_C})$. Thus we obtain isomorphisms
\begin{eqnarray}\label{final}
\Ext^r_\zp(J_C, B) \simeq \Ext^r_\zp(\Z[C], B)\simeq H^r(N_C; B|{N_C}).
\end{eqnarray}

Combining the isomorphisms (\ref{step1}), (\ref{sum3}) and (\ref{final}) we obtain the isomorphism (\ref{prod3}). 
\end{proof}

\begin{corollary}\label{corfinal}
Let $\pi$ be a discrete torsion-free group such that the centraliser of any nontrivial element $g\in \pi$, $g\not=1$ is infinite cyclic. 
Then $$E_0^{r,s}=0$$ for all $r>1$ and $s\ge 1$. 
\end{corollary}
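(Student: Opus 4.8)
The plan is to derive Corollary \ref{corfinal} as an immediate consequence of Theorem \ref{thm5} together with standard facts about the cohomology of infinite cyclic groups. The isomorphism \eqref{prod3} reduces the computation of $E_0^{r,s}$ for $s\ge 1$ to a product, indexed by the joint conjugacy classes $C\in\cps$ of $s$-tuples of nontrivial elements, of cohomology groups $H^r(N_C, A\,|\,N_C)$, where $N_C$ is the intersection of the centralisers of the entries of a representative tuple. So the first step is simply to record that, since each tuple in $C$ consists of nontrivial elements, $N_C$ is contained in the centraliser of any one nontrivial element $g_i$, hence $N_C$ is a subgroup of an infinite cyclic group by hypothesis. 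A subgroup of $\Z$ is either trivial or infinite cyclic; but $N_C$ is nontrivial, since it always contains the cyclic subgroup generated by any $g_i$ (the entries commute with themselves). Therefore $N_C\cong\Z$.

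The second step is to invoke the fact that the infinite cyclic group $\Z$ has cohomological dimension $1$: for any $\Z[\Z]$-module $B$ one has $H^r(\Z, B)=0$ for all $r\ge 2$. (One can see this directly from the two-term free resolution $0\to\zc\xrightarrow{t-1}\zc\to\Z\to 0$ of $\Z$ over $\Z[\Z]=\zc$.) Applying this with $B = A\,|\,N_C$ gives $H^r(N_C, A\,|\,N_C)=0$ for every $r\ge 2$ and every $C\in\cps$. Plugging this into \eqref{prod3} yields $E_0^{r,s}=0$ for all $r\ge 2$ and all $s\ge 1$, which is precisely the statement (the range $r>1$ is the same as $r\ge 2$).

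Since essentially all the work has already been done in Theorem \ref{thm5}, there is no real obstacle here; the only thing to be careful about is the edge case where $\cps$ is empty or where some $N_C$ might a priori be trivial. The first is harmless — an empty product is zero in the relevant sense, or rather the group $I^s$ itself is zero and all $\Ext$ groups vanish — and the second cannot occur, as noted above, because $N_C$ always contains a nontrivial element. Thus the proof is a two-line deduction: restrict to centralisers, observe they are infinite cyclic hence of cohomological dimension one, and conclude vanishing in degrees $\ge 2$.
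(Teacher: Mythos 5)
Your overall approach is exactly the paper's: apply Theorem \ref{thm5}, observe that each $N_C$ sits inside the centraliser of a nontrivial element and is therefore a subgroup of $\Z$, and conclude vanishing of $H^r(N_C,\cdot)$ for $r>1$ from cohomological-dimension considerations. However, your intermediate claim that $N_C$ is always nontrivial is \emph{false}, and the reason you give for it is a non-sequitur. You argue that $N_C$ contains $\langle g_i\rangle$ because ``the entries commute with themselves,'' but $N_C = \bigcap_{j} Z(g_j)$ requires $g_i$ to commute with \emph{every} $g_j$, not just with itself. For instance, in a free group $F_2 = \langle a,b\rangle$ (whose nontrivial centralisers are infinite cyclic) the tuple $(a,b)$ has $N_C = Z(a)\cap Z(b) = \langle a\rangle\cap\langle b\rangle = \{1\}$, which is trivial. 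So the case you try to rule out in your last paragraph genuinely occurs.

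Fortunately this error is harmless: the only property needed is that $N_C$, being a subgroup of an infinite cyclic group, is either trivial or infinite cyclic, and in both cases has cohomological dimension at most $1$, so $H^r(N_C, A|_{N_C})=0$ for $r\ge 2$. This is exactly what the paper does — it explicitly keeps both alternatives (``either $\Z$ or trivial'') and does not try to exclude the trivial case. You should drop the false claim and simply note that a subgroup of $\Z$ has cohomological dimension $\le 1$, which covers both possibilities uniformly.
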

\begin{proof}
Applying Theorem \ref{thm5} we see that each group $N_C$, where $C\in \cps$, is a subgroup of $\Z$ and hence it is either $\Z$ or trivial. The result now follows from (\ref{prod3}) since we assume that $r>1$. 
\end{proof}

\begin{figure}[h]
\begin{center}
%\resizebox{9cm}{7cm}
{\includegraphics[width=0.45 \textwidth]{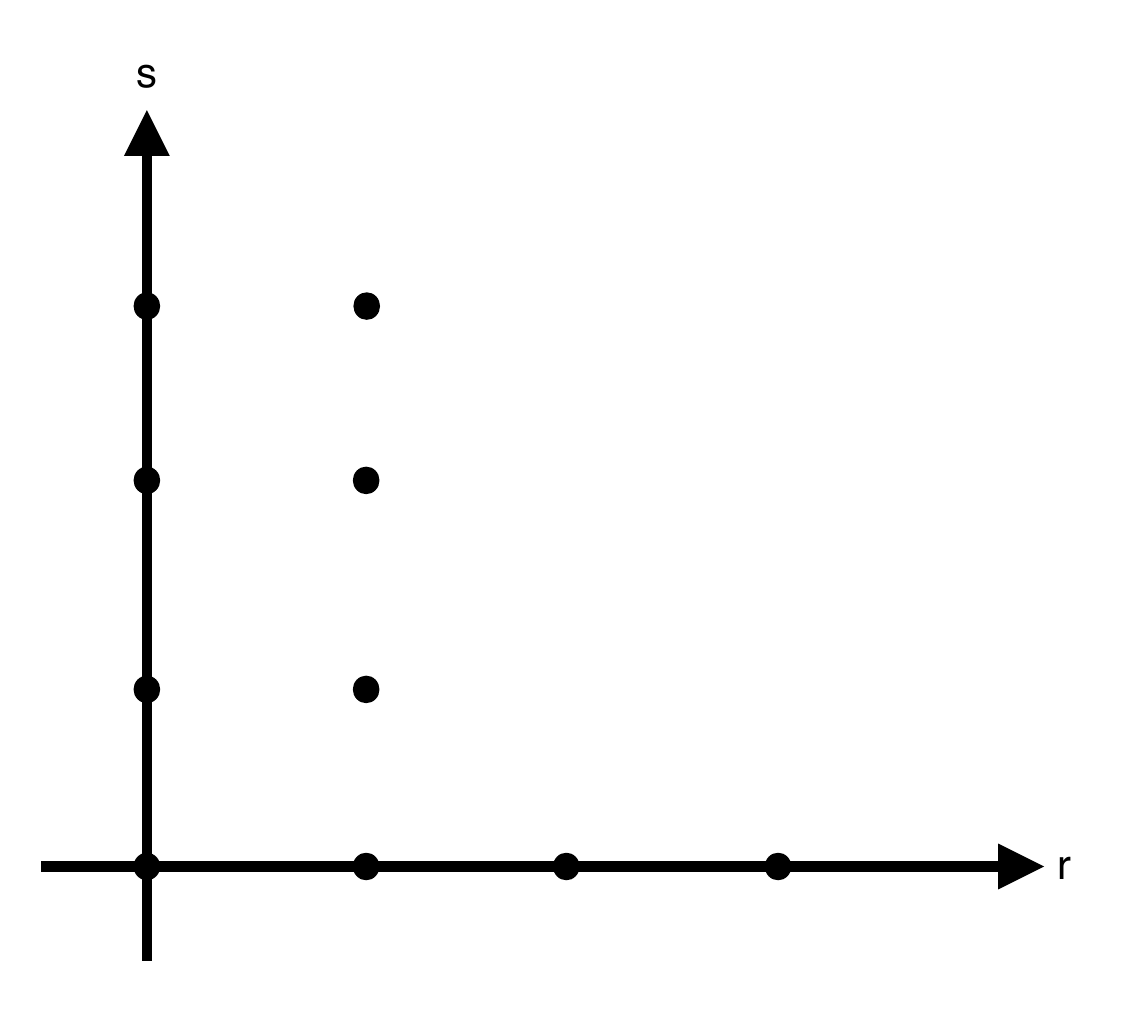}}
%[167,380][503,761]
\end{center}
\caption{The nontrivial groups in the $E_0$-term of the spectral sequence.}\label{nonzero}
\end{figure}
Figure \ref{nonzero} shows potentially nontrivial groups in the $E_0$-term in the case when all centralisers of nontrivial elements are cyclic.

\section{Proofs of Theorems \ref{thm1}, \ref{thm11}, \ref{thm2} and \ref{cor4} }\label{proofs}

\begin{proof}[Proof of Theorem \ref{thm2}] Let $X=K(\pi, 1)$ where the group $\pi$ satisfies our assumption that the centraliser of any nonzero element is infinite cyclic. 
Let $\alpha\in H^n(X\times X, A)$ be a zero-divisor. Here $A$ is a local coefficient system over $X\times X$. By Corollary \ref{cor66}, statement (2), we have $j_0(\alpha)=0$. Besides, applying Corollary \ref{corfinal} we see that the obstructions $j_s(\alpha)\in E_s^{n-s, s}$ vanish for $s=1, 2, \dots, n-2$ since they lie in the trivial groups. Thus we obtain $$\alpha\in D_{n-1}^{n,0}.$$
Next we apply Corollary \ref{coress} which gives
\begin{eqnarray}\label{formula1}
\alpha = \psi_\ast(\vv^{n-1}\cup u)\end{eqnarray} 
for some $u\in H^{1}(\pi\times\pi, \Hom_\Z(I^{n-1}, A))$ where $\psi$ is given by (\ref{reverse}). 

To prove that $\wgt(\alpha)\ge n-1$ we observe that the canonical class $\vv$ has positive weight, $\wgt(\vv)\ge 1$, since it is a zero-divisor, see (\ref{can1}). 
Hence using (\ref{add}) we obtain $\wgt(\vv^{n-1}) \ge n-1$. Let $f: Y\to X\times X$ be a continuous map as in Definition \ref{defweight}. Then 
$$f^\ast(\alpha) = \psi_\ast(f^\ast(\vv^{n-1})\cup f^\ast(u))=0$$ since $f^\ast(\vv^{n-1})=0$. 
%
%To prove that $\wgt(\alpha)\ge n-1$ we want to show that $\alpha|_U =0$ for any open subset $U\subset X\times X$ with $\tc_X(U)\le n-1$.
%In view of (\ref{formula1}) it is enough to show that 
%$\vv^{n-1}|_U=0$. According to the definition, $U=U_1\cup \dots\cup U_{n-1}$ where each $U_j\subset X\times X$ is an open subset deformable into the diagonal 
%$\Delta_X\subset X\times X$. Since $\vv$ is a zero-divisor, we have $\vv|_{U_i}=0$ for any $i=1, \dots, n-1$. Hence there exists a relative class
%$\vv_i\in H^1(X\times X, U_i, I)$ with $\vv_i|_{X\times X} = \vv$. Then the product 
%$(\vv_1\cup \vv_2\cup \dots\cup \vv_{n-1})|_U$ vanishes since it lies in the trivial group 
%$H^{n-1}(U, \cup_{i=1}^{n-1} U_i, I^{n-1})=0$. On the other hand $\vv^{n-1}\, |\, U = (\vv_1\cup \vv_2\cup \dots\cup \vv_{n-1})\, |\, U$ proving that 
%$\vv^{n-1}\,| \, U=0$. 
This completes the proof. 
\end{proof}
\begin{proof}[Proof of Theorem \ref{thm11}]
Suppose that we are in the situation of Theorem \ref{thm11}, i.e. let $X$ be an aspherical finite cell complex such whose fundamental group 
$\pi=\pi_1(X)$ has 
the properties (1) and (2). 
Denote $n=\cd(\pi\times \pi)$. We may find a local coefficient system $A$ over $X\times X$ and a nonzero cohomology class $\alpha\in H^n(X\times X, A)$. Since $n>\cd(\pi)$ we obtain that $\alpha$ is a zero-divisor. Next we apply Theorem \ref{thm2} which implies that the weight of $\alpha$ satisfies $\wgt(\alpha)\ge n-1$. Thus we obtain that $\tc(X)\ge n$. 

The inequality $\tc(X)\le n+1$ follows from the Eilenberg - Ganea theorem and general dimensional upper bound for the topological complexity $\tc(X) \le \dim(X\times X)+1$. 

Hence,
$\tc(X)$ is either $n$ or $n+1$. 
\end{proof}

\begin{proof}[Proof of Theorem \ref{cor4}] As in the proof of Theorem \ref{thm11} we may find a nonzero cohomology class 
$\alpha\in H^n(X\times X, A)$, where $n=\cd(\pi\times \pi)$, which is automatically a zero-divisor, since $n>\cd(\pi)$. Applying the arguments used in the 
proof of Theorem \ref{thm2} we find that $\alpha = \psi_\ast(\vv^{n-1}\cup u)$, see (\ref{formula1}), implying that $\vv^{n-1}\not=0$. 

\end{proof}

\begin{proof}[Proof of Theorem \ref{thm1}] Let $X$ be an aspherical finite cell complex with $\pi=\pi_1(X)$ hyperbolic. 
Then $\pi$ is torsion-free and 
we may assume that $\pi\not= 1$ since in the simply connected case our statement is obvious. 
The centraliser $$Z(g)=\{h\in \pi; hgh^{-1}=g\}$$ of any nontrivial element $g\in \pi$ is virtually cyclic, see \cite{BH}, Corollary 3.10 in chapter III. 
It is well known that any torsion-free virtually cyclic group is cyclic. Thus, we see that the assumption (1) of Theorem \ref{thm11} is satisfied. 

Next we show that the assumption (2) of Theorem \ref{thm11} is satisfied as well, i.e. $\cd(\pi\times\pi)>\cd(\pi)$. 

We know that $\pi$ has a finite $K(\pi, 1)$ and hence there exists a finite free resolution $P_\ast$ of $\Z$ over $\zp$. Here each $\zp$-module $P_i$ is finitely generated and free and $P_i$ is nonzero only for finitely many $i$. Note that $P_\ast\otimes_\Z P_\ast$ is a free resolution of $\Z$ over $\zpp$ (see \cite{Brown}, Proposition 1.1 in chapter V). 
For any two left $\zp$-modules $A_1, A_2$ we have the natural isomorphism of chain complexes 
$$\Hom_\zp(P_\ast, A_1)\otimes_\Z \Hom_\zp(P_\ast, A_2) \to \Hom_\zpp(P_\ast\otimes_\Z P_\ast, A_1\otimes_\Z A_2).$$
If at least one of these chain complexes is flat over $\Z$, the K\"unneth theorem is applicable and we obtain the monomorphism
$$ H^j(\pi, A_1)\otimes_\Z H^{j'}(\pi, A_2)\to H^{j+j'}(\pi\times \pi, A\otimes_\Z A_2)$$
given by the cross-product $\alpha\otimes \alpha'\mapsto \alpha\times \alpha'$. 
We see that a certain cross-product $\alpha\times \alpha'$ is nonzero provided that the tensor product of the abelian groups $H^j(\pi, A_1)\otimes_\Z H^{j'}(\pi, A_2)$ is nonzero. 

Let $A_1, A_2$ be left $\zp$-modules chosen such that $H^n(\pi, A_1)\not=0$, where $n=\cd(\pi)$, and $H^1(\pi, A_2)\simeq \Z$. 
We may take $A_1=\zp$, see \cite{Brown}, chapter VIII, Proposition 6.7. 
Hence $\Hom_\zp(P_\ast, A_1)$ is a chain complex of free abelian groups. 
For some nonzero
elements $\alpha\in H^n(\pi, A_1)$ and $\alpha'\in H^1(\pi, A_2)$ we shall have $\alpha\times\alpha'\not=0$ since 
$H^n(\pi, A_1)\otimes_\Z H^{1}(\pi, A_2)\simeq H^{n}(\pi, A_1)\not=0.$ Thus, we obtain $\cd(\pi\times \pi)\ge n+1= \cd(\pi)+1$ as required.

We explain below how to construct the module $A_2$. 
Fix a non-unit element $g\in \pi$ and let $C=C_g\subset \pi$ denote the conjugacy class of $g$. Let $A_2=\Z^C=\Hom_\Z(\Z[C], \Z)$ denote the set of all functions $f:C\to \Z$; here $\Z[C]$ denotes the free abelian group generated by 
$C$.
Since $\pi$ acts on $C$ (via conjugation) we obtain the induced action of $\pi$ on $\Z^C$, where $(g\cdot f)(h)=f(g^{-1}hg).$ 

Let $P_\ast$ denote a free resolution of $\Z$ over $\zp$. We have the isomorphisms of chain complexes (which are similar to those used in the proof of Theorem \ref{thm5})
\begin{eqnarray*}\Hom_\zp(P_\ast, A_2)&\simeq& \Hom_\zp(P_\ast\otimes_\Z\Z[C], \Z)\\
&\simeq& \Hom_\zp(\Z[C], \Hom_\Z(P_\ast, \Z))\\ 
&\simeq& \Hom_{\Z[Z(g)]}(P_\ast, \Z).\end{eqnarray*}
The complex $P_\ast$ is a free resolution of $\Z$ over $\Z[Z(g)]$ by restriction. Thus, $H^i(\pi, A_2) \simeq H^i(Z(g), \Z)$ for any $i$. 
Since we know that the centraliser $Z(g)$ is isomorphic to $\Z$ we have 
$H^1(\pi, A_2)\simeq \Z$. 

This completes the proof. 
\end{proof}
%\section{Cohomological dimension of product groups}
%
%
%
%\begin{proposition}
%Let $\pi$ be a group of type $FP_\infty$ (in the sense of Brown, Cohomology of Groups, Section VIII.4) of finite cohomological dimension. Then
%$$\cd(\pi \times \pi) = 2 \cd(\pi) \ . $$
%\end{proposition}
%
%\begin{proof}
%It follows immediately from studying tensor products of projective resolutions that $\cd(\pi \times \pi) \leq 2\cd(\pi)$, so it suffices to show that $\cd(\pi \times \pi)\geq 2\cd(\pi)$. 
%
%Put $d := \cd(\pi)$. Since $\pi$ is of type $FP_\infty$, the cohomology of $\pi$ commutes with direct limits of the coefficient modules, see Brown, Cohomology of Groups, Theorem VIII.4.8. Since every module is a direct limit of finitely generated modules, this implies the existence of a finitely generated left $\Z\pi$-module $A$ with $H^d(\pi;A)\neq 0$. Using again that $\pi$ is of type $FP_\infty$, it follows from Brown, Cohomology of Groups, Proposition VIII.4.3, that $H_*(\pi;A)$ is finitely generated. Moreover, we may w.l.o.g. assume that $A$ is $\Z$-free. By these properties, the K\"unneth formula for group cohomology particularly yields a monomorphism 
%$$ i: \bigoplus_{j=0}^n H^j(\pi;A)\otimes H^{n-j}(\pi;A)\to H^n(\pi \times \pi;A\otimes A) $$
%for every $n \in {\mathbb N}_0$. In particular, $H^d(\pi;A)\otimes H^d(\pi;A)$ maps injectively into $H^{2d}(\pi;A)$, such that $\cd(\pi \times \pi) \geq 2d =2 \cd(\pi)$.
%\end{proof}
\section{An application}\label{sym}

A symplectic manifolds $(M,\omega)$ is said to be {\it symplectically aspherical} \cite{KRT} if 
\begin{eqnarray}\label{van}\int_{S^2}\, f^\ast\omega \, =\, 0\end{eqnarray}
for any continuous map $f:S^2\to M$. 
It follows from the Stokes' theorem that a symplectic manifold which is aspherical in the usual sense is also symplectically aspherical.

\begin{theorem} \label{symp}
Let $(M, \omega)$ be a closed $2n$-dimensional symplectically aspherical manifold. Suppose that the fundamental group $\pi=\pi_1(M)$ is of type FL and 
the centraliser of any nontrivial element of $\pi$ is cyclic. Then the topological complexity $\tc(M)$ is either $4n$ or $4n+1$. 
\end{theorem}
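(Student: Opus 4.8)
The plan is to reduce Theorem \ref{symp} to Theorem \ref{thm11}, whose hypotheses are almost literally those assumed here, the only gap being condition (2), namely that $\cd(\pi\times\pi)>\cd(\pi)$. So the first step is to record that $M$ is a finite aspherical cell complex: since $\pi$ is of type FL it is in particular torsion-free, and being symplectically aspherical together with $\pi_i(M)=0$ for $i\ge 2$ (which is what we must establish) will make $M$ a $K(\pi,1)$. Here is where the symplectic hypothesis enters. If $\pi_2(M)\ne 0$, then by Hurewicz and the fact that $[\omega]$ evaluates trivially on all spherical classes (equation (\ref{van})), one argues as in the Gompf/Kȩdra–Rudyak–Tralle circle of ideas that $[\omega]^n\in H^{2n}(M;\R)$ would have to vanish — contradicting $\int_M\omega^n>0$ for a closed symplectic $2n$-manifold. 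More precisely: the classifying map $M\to K(\pi,1)$ is $2$-connected, and symplectic asphericity says $[\omega]$ is pulled back from $H^2(\pi;\R)$; hence $[\omega]^n$ is pulled back from $H^{2n}(\pi;\R)$, which forces $\cd(\pi)\ge 2n$. Combined with the standard upper bound this already gives something, but the cleaner route is: the image of $M\to K(\pi,1)$ carries the fundamental class nontrivially in degree $2n$ (because $\omega^n$ detects it), so $\cd(\pi)=2n$, and in fact $M$ is a manifold model up to the usual finiteness, so $\pi$ has geometric dimension $2n$ and $M\simeq K(\pi,1)$.

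The second step is to verify condition (2) of Theorem \ref{thm11}. With $\cd(\pi)=2n$ established, I would invoke exactly the cross-product argument given in the proof of Theorem \ref{thm1}: since $\pi$ is of type FL there is a finite free resolution $P_\ast$ of $\Z$ over $\zp$, so $P_\ast\otimes_\Z P_\ast$ is a free resolution over $\zpp$, and taking $A_1=\zp$ gives a $\Z$-free cochain complex computing $H^\ast(\pi;\zp)$ with $H^{2n}(\pi;\zp)\ne 0$ (Brown, chapter VIII, Proposition 6.7). Choosing $A_2=\Z^C$ for $C$ the conjugacy class of a nontrivial element $g$, one gets $H^1(\pi;A_2)\cong H^1(Z(g);\Z)$, and since the centraliser $Z(g)$ is cyclic and torsion-free it is either trivial or $\Z$; picking $g$ so that $Z(g)\cong\Z$ (possible since $\pi\ne 1$ is torsion-free, so it contains an element of infinite order whose centralizer is infinite) yields $H^1(\pi;A_2)\cong\Z$. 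The Künneth monomorphism then produces a nonzero class in $H^{2n+1}(\pi\times\pi;\zp\otimes_\Z A_2)$, so $\cd(\pi\times\pi)\ge 2n+1>2n=\cd(\pi)$, which is condition (2).

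With conditions (1) and (2) in hand, Theorem \ref{thm11} applies directly and gives $\tc(M)\in\{\cd(\pi\times\pi),\cd(\pi\times\pi)+1\}$. The final step is to identify $\cd(\pi\times\pi)=4n$. The upper bound $\cd(\pi\times\pi)\le\cd(\pi)+\cd(\pi)=4n$ is immediate from $P_\ast\otimes_\Z P_\ast$ being a length-$4n$ free resolution. For the lower bound $\cd(\pi\times\pi)\ge 4n$, I would use that $M\times M$ is a closed $4n$-manifold which (by Step 1) is aspherical with fundamental group $\pi\times\pi$, hence $\cd(\pi\times\pi)=\cd(M\times M)=4n$ because a closed aspherical $d$-manifold has a fundamental group of cohomological dimension exactly $d$ (the fundamental class is detected by $H^d$ with appropriate twisted coefficients). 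Therefore $\tc(M)$ is either $4n$ or $4n+1$, as claimed.

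The main obstacle I anticipate is Step 1 — establishing that $M$ is genuinely aspherical (equivalently $\cd(\pi)=2n$) purely from symplectic asphericity and $\pi$ being of type FL. The cohomological input is clean ($[\omega]^n$ lies in the image of $H^{2n}(\pi)\to H^{2n}(M)$, so $H^{2n}(\pi)\ne 0$ and $\cd(\pi)\ge 2n$), but upgrading "$\cd(\pi)\ge 2n$ and $M$ is a $2n$-manifold" to "$M\simeq K(\pi,1)$" requires knowing the higher homotopy vanishes; this is where one genuinely needs that the top power of the symplectic form obstructs the existence of essential spheres, and one must be careful that type FL (rather than a geometric finiteness hypothesis) is what licenses the resolution-length bookkeeping used in Steps 2 and 3. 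Everything downstream is then a faithful transcription of the arguments already carried out for Theorem \ref{thm1}.
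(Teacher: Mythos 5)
Your Step~1 is where the argument breaks: \emph{symplectic asphericity does not imply asphericity}. The condition $\int_{S^2}f^*\omega=0$ for all $f:S^2\to M$ says only that $[\omega]$ is pulled back from $H^2(K(\pi,1);\R)$; it does not force $\pi_2(M)=0$, and there are well-known closed symplectically aspherical $4$-manifolds with nontrivial $\pi_2$ (Gompf). The reasoning that $\omega^n\ne 0$ together with $[\omega]$ being pulled back would make $M$ a $K(\pi,1)$ is simply not a valid implication: it shows $H^{2n}(\pi;\R)\ne 0$, hence $\cd(\pi)\ge 2n$, but nothing more. Consequently you cannot apply Theorem~\ref{thm11} to $M$ (that theorem requires an aspherical complex), and your Step~3 identification $\cd(\pi\times\pi)=4n$ via ``$M\times M$ is a closed aspherical $4n$-manifold'' also collapses for the same reason.

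The paper sidesteps this entirely and never claims $M$ is aspherical, nor does it compute $\cd(\pi\times\pi)$. Instead it takes a classifying map $g:M\to K=K(\pi,1)$ (with $K$ a finite complex, using that $\pi$ is FL), uses the Hopf exact sequence and $(\ref{van})$ to produce a class $v\in H^2(K;\R)$ with $g^*(v)=u=[\omega]$, forms the zero-divisor $\bar v=v\times 1-1\times v$ on $K\times K$, and checks $\bar v^{2n}\ne 0$ by pulling back to $M\times M$ where $\bar u^{2n}=\pm\binom{2n}{n}u^n\times u^n\ne 0$. Theorem~\ref{thm2} is applied to $\bar v^{2n}$ on the \emph{aspherical} space $K\times K$ to get $\wgt(\bar v^{2n})\ge 4n-1$; then the naturality property built into Definition~\ref{defweight} (weights can only increase under pullback along $g\times g$ once nonvanishing of the pullback is known) gives $\wgt(\bar u^{2n})\ge 4n-1$ directly on $M\times M$, so $\tc(M)\ge 4n$. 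The upper bound $\tc(M)\le 4n+1$ is just $\dim(M\times M)+1$. Your Künneth computation of $\cd(\pi\times\pi)>\cd(\pi)$ in Step~2 is correct as a piece of algebra, but it is unnecessary here, and without Step~1 it does not feed into a valid application of Theorem~\ref{thm11} to $M$.
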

\begin{proof} Our assumption on $\pi$ being of type $FL$ implies that there exists a finite cell complex $K=K(\pi, 1)$ (see \cite{Brown}, chapter VIII, Theorem 7.1).
Let $g: M\to K$ be a map inducing an isomorphism of the fundamental groups. Let $u\in H^2(M, \R)$ denote the class of the symplectic form. 
Then $u^{n}\not=0$ since $\omega^n$ is a volume form on $M$. 

Next we show that there exists a unique class $v\in H^2(K,\R)$ with $g^\ast(v)=u$. With this goal in mind we consider the Hopf exact sequence
(see \cite{Brown}, chapter II, Proposition 5.2)
$$\pi_2(M) \stackrel{h}\to H_2(M)\stackrel{g_\ast}\to H_2(K)\to 0$$
 where $h$ denotes the Hurewicz homomorphism and all homology groups are with integer coefficients. We may view $u$ as the homomorphism 
$u_\ast: H_2(M)\to \R$; it vanishes on the image of $h$ due to (\ref{van}). Hence there exists a unique $v_\ast: H_2(K)\to \R$ with 
$u_\ast= v_\ast\circ g_\ast$ and this $v_\ast$ is the desired cohomology class $v\in H^2(K, \R)$. 

Denote $\bar u = u\times 1-1\times u\in H^2(M\times M, \R)$ and also $\bar v = v\times 1-1\times v\in H^2(K\times K, \R)$. These classes are zero-divisors and $(g\times g)^\ast(\bar v) = \bar u$. Note that $$\bar u^{2n} =\pm \,  \binom {2n} n u^n\times u^n \not =0.$$ Thus we see that $\bar v^{2n} \not=0$ since $\bar u^{2n} = 
(g\times g)^\ast(\bar v^{2n})$. 

Applying Theorem \ref{thm2} to the class $\alpha = \bar v^{2n}\in H^{4n}(K, \R)$ we obtain $\wgt(\alpha)\ge 4n-1$. 

We claim that   
$\wgt(\bar u^{2n}) = \wgt((g\times g)^\ast(\alpha) ) 
\ge 4n-1$. Indeed, let $f: Y\to M\times M$ be a map satisfying the properties of the Definition \ref{defweight} with $k=4n-1$. 
Then 
$$f^\ast(\bar u^{2n}) =  f^\ast((g\times g)^\ast(\alpha) ) = \left[ (g\times g)\circ f   \right]^\ast (\alpha) =0.$$
Since $ \bar u^{2n}\not=0$, the inequality 
$\wgt(\bar u^{2n}) \ge 4n-1$ implies that $\tc(M)\ge 4n$. 
The upper bound $\tc(M)\le 4n+1$ is standard
(see \cite{F}, Theorem 4). Thus, $\tc(M) \in \{4n, 4n+1\}$ as claimed. This completes the proof.

\end{proof} 

\bibliographystyle{amsalpha}

\begin{thebibliography}{AA}

\bibitem{Ber} I. Berstein, \textit{On the Lusternik -- Schnirelmann category of Grassmannians}, Math. Proc. Camb. Phil. Soc. {\bf 79}(1976) 129 -- 134.

\bibitem{BB} N. Bourbaki, \textit{\'El\'ements de math\'ematique. Alg\'ebre. Chapitre 10. Alg\'ebre homologique}. 
%(French) [Elements of mathematics. Algebra. Chapter 10. Homological algebra], %Reprint of the 1980 original [Masson, Paris; MR0610795]. 
Springer-Verlag, Berlin, 2007. 

\bibitem{BH} M. Bridson, A. Haefliger, \textit{Metric Spaces of Non-positive Curvature}, Springer, 1999. 

\bibitem{Brown} K. Brown, \textit{Cohomology of Groups}, Springer, 1982. 


\bibitem{CohenPruidze} D. Cohen and G. Pruidze,
\textit{Motion planning in tori}, 
Bull. Lond. Math. Soc. {\bf 40} (2008), 249?262. 

%Topological complexity of basis-conjugating automorphism groups, Pacific J. Math. 238 (2008), no. 2, 233--248. 

\bibitem{CohenVandem} D. Cohen and L. Vandembroucq, \textit{Topological complexity of the Klein bottle}, arXiv:1612.03133.

\bibitem{CF} A. Costa and M. Farber, \textit{Motion planning in spaces with small fundamental group}, Communications in contemporary mathematics, 
{\bf {12}}(2010) 107--119. 

\bibitem{MWD} M. W. Davis, \textit{The geometry and topology of Coxeter groups}, Princeton University Press, 2008. 

\bibitem{Dra} A. Dranishnikov, \textit{The virtual cohomological dimension of Coxeter groups}, Proc. AMS, {\bf{125}}(1997), 1885-1891. 


\bibitem{DR} A. Dranishnikov and Yu. Rudyak, \textit{On the Berstein-Svarc theorem in dimension 2}. Math. Proc. Cambridge Philos. Soc. {\bf 146} (2009), no. 2, 407--413.


\bibitem{Dranish} A. Dranishnikov, \textit{The topological complexity and the homotopy cofiber of the diagonal map for non-orientable surfaces,} Proc. Amer. Math. Soc. 144 (2016), no. 11, 4999--5014. 

\bibitem{EG} S. Eilenberg, T. Ganea, \textit{On the Lusternik-Schnirelmann Category of Abstract Groups}, Annals of Mathematics, 
{\bf 65}, (1957), 517-518. 

\bibitem{FH} E. Fadell and S. Husseini, \textit{Category weight and Steenrod operations}, Bol. Mat. Mexicana (2) {\bf{37}}(1992), 151-161.

\bibitem{F} M. Farber, \textit{Topological complexity of motion planning}, Discrete and Comp. Geom {\bf 29}(2003), 211--221.

\bibitem{FarberTCsurvey} M. Farber, \textit{Topology of robot motion planning,} Morse theoretic methods in nonlinear analysis and in symplectic topology, 185--230,
NATO Sci. Ser. II Math. Phys. Chem., 217, Springer, Dordrecht, 2006. 

\bibitem{FG} M. Farber and M. Grant, \textit{Robot motion planning, weights of cohomology classes and cohomology operations}, Proceedings of AMS, 
{\bf {136}}(2008), 3339 -- 3349.

%\bibitem{Finv} M. Farber, \textit{Invitation to Topological Robotics}, Zurich Lectures in Advanced Mathematics, EMS, 2008.


\bibitem{Finv} M. Farber, \textit{Invitation to topological robotics}, Zurich Lectures in Advanced Mathematics, European Mathematical Society (EMS), Z\"urich, 2008.

\bibitem{Frecent} M. Farber, \textit{Configuration Spaces and Robot Motion Planning Algorithms}, arXiv:1701.02083. 



\bibitem{FarberYuzvinsky} M. Farber, S. Yuzvinsky, \textit{Topological robotics: subspace arrangements and collision free motion planning}, Geometry, topology, and mathematical physics, 145--156,
Amer. Math. Soc. Transl. Ser. 2, 212, Adv. Math. Sci., 55, Amer. Math. Soc., Providence, RI, 2004. 


\bibitem{Grant} M. Grant, \textit{Topological complexity, fibrations and symmetry,}
Topology Appl. 159 (2012), no. 1, 88--97. 

\bibitem{GrantLuptonOprea} M. Grant, G. Lupton and J. Oprea, \textit{New lower bounds for the topological complexity of aspherical spaces}, Topology Appl. 189 (2015), 78--91. 

\bibitem{GrantRecio} M. Grant and D. Recio-Mitter, \textit{Topological complexity of subgroups of Artin's braid groups}, arXiv:1607.04830.

\bibitem{Ghyp} M. Gromov, \textit{Hyperbolic groups}, Essays in group theory, 75 -- 263, Math. Sci. Res. Inst. Publ., 8, Springer, New York, 1987.

\bibitem{Gromov} M. Gromov, \textit{Asymptotic invariants of infinite groups}, Geometric group theory, Vol. 2 (Sussex, 1991), London Math. Soc. Lecture Note Ser., vol. 182, Cambridge Univ. Press, Cambridge, 1993, pp. 1--295.

\bibitem{HS} P.J. Hilton, U. Stammbach, \textit{A course in homological algebra}, Springer, 1971.


\bibitem{KRT} J. Kedra, Y. Rudyak, A. Tralle, \textit{Symplectically aspherical manifolds},
J. Fixed Point Theory Appl. {\bf 3} (2008), 1-- 21. 

\bibitem{Maclane} S. MacLane, \textit{Homology}, Springer, 1963.

\bibitem{Ollivier} Y. Ollivier, \textit{A January 2005 invitation to random groups}, Ensaios Matematicos, vol. 10, Sociedade Brasileira de Matematica, Rio de Janeiro, 2005.

\bibitem{Rudyak} Y. Rudyak, 
\textit{On topological complexity of Eilenberg-MacLane spaces.} 
Topology Proceedings {\bf 48} (2016), 65-- 67. 


\bibitem{Sch} A.S. Schwarz, \textit{The genus of a fiber space}. Amer. Math. Sci. Transl.
{\bf {55}}(1966), 49 --140.






%
%
%\bibitem{Rudyak}  Y. Rudyak, On topological complexity of Eilenberg-MacLane spaces, Topology Proc. 48 (2016), 65--67.



\end{thebibliography}

\end{document}